\documentclass[a4paper,11pt]{amsproc}

\usepackage{amsmath,amssymb,amsthm,mathtools}
\usepackage[T1]{fontenc}
\usepackage{lmodern}
\usepackage{microtype}
\usepackage{enumitem}
\usepackage{xargs,xcolor}

\definecolor{citegreen}{rgb}{0,0.5,0.15}
\definecolor{linkblue}{rgb}{0,0.4,1}
\definecolor{citebordergreen}{rgb}{0,0.9,0.37}
\usepackage[colorlinks, linkcolor=blue, unicode=true, citecolor=citegreen, linkbordercolor={0 0.4 1}, citebordercolor={0 0.9 0.37}, pagebackref]{hyperref}

\usepackage{amsrefs}

\newtheorem{theorem}{Theorem}[section]
\newtheorem*{theorema}{Theorem A}
\newtheorem*{theoremb}{Theorem B}
\newtheorem{lemma}[theorem]{Lemma}
\newtheorem{proposition}[theorem]{Proposition}
\newtheorem{corollary}[theorem]{Corollary}

\theoremstyle{definition}
\newtheorem{openproblem}[theorem]{Question}
\newtheorem{definition}[theorem]{Definition}
\newtheorem{remark}[theorem]{Remark}

\makeatletter
\newcommand{\namedlabel}[2]{%
   \phantomsection
   \def\@currentlabel{#2}%
   \label{#1}%
}
\makeatother

\newcommand{\bbC}{\mathbb C}
\newcommand{\bbN}{\mathbb N}

\newcommand{\R}{\mathcal R}
\newcommand{\Unitary}{\mathcal U}

\newcommand{\SU}{\operatorname{SU}}
\newcommand{\Tr}{\operatorname{Tr}}

\newcommand{\Ad}{\operatorname{Ad}}

\newcommand{\eps}{\varepsilon}

\title{The hyperfinite \texorpdfstring{II$_1$}{II1}-factor is Ulam stable}

\author{Vadim Alekseev}
\address{Vadim Alekseev, TU Dresden, 01062 Dresden, Germany}
\email{vadim.alekseev@tu-dresden.de}

\author{Andreas Thom}
\address{Andreas Thom, TU Dresden, 01062 Dresden, Germany}
\email{andreas.thom@tu-dresden.de}

\subjclass{Primary 46L10; Secondary 39B82, 46L07}
\keywords{Ulam stability, hyperfinite \texorpdfstring{II$_1$}{II1}-factor, tracial von Neumann algebras, trace norm, approximate $*$-homomorphisms}

\begin{document}

\begin{abstract}
We prove Ulam stability of the hyperfinite \(\mathrm{II}_1\)-factor with
respect to the trace norm on the operator-norm unit ball. More
precisely, every sufficiently additive, multiplicative, unital, \(*\)-preserving
map from the hyperfinite \(\mathrm{II}_1\)-factor into a \(\mathrm{II}_1\)-factor von Neumann
algebra is uniformly close, after passing to a small amplification of the target, to a genuine unital \(*\)-homomorphism. As a
key finite-dimensional ingredient, we establish a dimension-free stability
theorem for matrix algebras in the same trace-norm setting.

As an application, we show that the hyperfinite \(\mathrm{II}_1\)-factor is
isolated among \(\mathrm{II}_1\)-factors with respect to sufficiently accurate
approximate \(*\)-isomorphisms.
\end{abstract}

\maketitle

\tableofcontents

\section{Introduction}

Ulam stability asks, in one form or another, whether an approximately structure preserving map is close to a genuine homomorphism of structures, see \cite[Chapter VI]{Ulam1960}. We intend to study this problem for tracial von Neumann algebras, where the natural metric structure is the  operator-norm unit ball endowed with the normalized trace norm and all the natural operations inherited from the ambient von Neumann algebra, see Definition \ref{def:matrix-approx-star}.

This stability problem sits at the intersection of several existing stability results.  On the group side, the Hilbert--Schmidt stability theorem of Gowers and Hatami for approximate representations of finite groups is a basic precursor for our argument~\cite{Gowers_Hatami}.  However, our perspective is also related in spirit to rigidity phenomena for corona algebras and their connection with Ulam stability, as surveyed in \cite{CoronaRigidity}, in particular to Farah's seminal work, see \cite{Farah-book}, and by later work on Ulam stability  by McKenney and Vignati for classes of $C^*$-algebras~\cite{McKenney-Vignati}.  Moreover, the question is related to metric lifting results for reduced
products~\cite{DeBondtVignatiMetric}, and to recent work of De Bondt and the
second author on automorphism groups of metric reduced products, both in the
symmetric-group setting~\cite{debondtthom} and in the trace-norm
setting~\cite{DeBondtThom2}.

Let $\R$ be the hyperfinite II$_1$-factor.  Roughly speaking, our main result (Theorem \ref{tha} resp.\ Theorem \ref{T.amplified.stability}) states that if $M$ is a II$_1$-factor and
$
   \varphi:\R_{\leq1}\to M_{\leq1}
$
is an $\varepsilon$-unital $L^2$-$\varepsilon$-$*$-homomorphism, meaning that all algebraic relations are preserved on the operator norm unit ball up to an error of order $\varepsilon$ in the $L^2$-norm, then after passing to an amplification $M^t$ of $M$ the map $\varphi$ is uniformly close in $L^2$-norm to a genuine $*$-homomorphism with $t$ arbitrarily close to $1$ as $\varepsilon\to0$.
The main technical result is a stability result for matrix algebras, which is the core of the paper and may be of independent interest; see Theorem \ref{thb} resp.\ Theorem \ref{thm:matrix-stability-reduction}. The proof uses a non-trivial input from representation theory of the unitary group ${\rm U}(n)$ in the form of a uniform gap of the Casimir eigenvalues that allows us to identify the standard representation among all irreducible representations.

As an application of the main theorem, we obtain an isolation phenomenon for
the hyperfinite factor.  More precisely, there exists $\delta_0>0$ such that
if a II$_1$-factor $M$ admits a $L^2$-$\delta_0$-$*$-isomorphism
$
   \R_{\leq1}\to M_{\leq1},
$
then $M\cong\R$; see Theorem
\ref{T.almost-surjective-R-map-forces-hyperfinite}.  Thus $\R$ is isolated,
in a quantitative sense, among II$_1$-factors with respect to $L^2$-approximate
$*$-isomorphisms.

\medskip

There is also an older and closely related Banach- and operator-algebraic
line of work on approximately multiplicative maps and near inclusions.
Johnson proved perturbation results for approximately multiplicative maps
between Banach algebras, with particularly useful consequences for linear
approximately multiplicative maps from amenable Banach algebras, and hence
from nuclear $C^*$-algebras, into dual Banach algebras and von Neumann
algebras~\cite{Johnson1988}.  Near-inclusion and Kadison--Kastler
perturbation results provide another closely related formulation of the same
stability philosophy; see, for example, the work of Phillips--Raeburn,
Christensen, Johnson, Christensen--Sinclair--Smith--White--Winter, and
Hirshberg--Kirchberg--White~\cite{PhillipsRaeburn1979,Christensen1980,
Johnson1994,CSSWW2012,HKW2012}. 

Moreover, there is a parallel group-theoretic tradition, going back at least to
Kazhdan's work on $\varepsilon$-representations of amenable and non-amenable groups~\cite{Kazhdan1982}.
In this setting one asks whether an approximately multiplicative map from a
group into a unitary group is uniformly close to an actual unitary
representation, see for example \cite{BOT2013} and the references therein. In the group setting, the Hilbert--Schmidt stability theorem of Gowers and Hatami for finite groups is a landmark result~\cite{Gowers_Hatami}, see also \cite{dCOT}.

Inspired by the work of Farah \cites{Farah-book}, McKenney and Vignati  \cite{McKenney-Vignati} prove operator-norm Ulam stability for finite-dimensional
$C^*$-algebras and, with von Neumann algebra targets, for AF algebras
obtained by an inductive limit construction~\cite{McKenney-Vignati}, see also the forthcoming paper \cite{AlekseevFarahThom}.  The present paper is
parallel to that theory but differs in two essential respects: the metric is
the normalized trace norm on the operator-norm unit ball, and the conclusion
naturally allows (and likely requires) a small amplification of the target factor.

There are also related local stability phenomena in the trace-norm setting.
Seminal results in this direction include Jung's uniqueness theorem for embeddings
of amenable tracial von Neumann algebras into $\R^\omega$~\cite{Jung} and the work of Hadwin and Shulman on pointwise approximations \cite{HadwinShulmanTracial,HadwinShulmanHS}. The stability problem studied
here is different: we ask for uniform approximation on the entire
operator-norm unit ball.

\medskip

The paper is organized as follows: The core of the paper is a quantitative stability theorem for matrix algebras.  Its proof proceeds in four steps.  First, we show that an $L^2$-$\varepsilon$-$*$-homomorphism approximately preserves the trace, Section \ref{sec:preliminaries}.  Second, we restrict to the unitary group and obtain an approximate unitary representation.  Third, we apply the compact-group stability machinery of de Chiffre, Ozawa and the second author~\cite{dCOT}, see Section \ref{sec:dCOT} and combine it with a representation-theoretic analysis of almost isometric representations of ${\rm U}(n)$, see Section \ref{sec:matrix-stability}.  Once the matrix case is established, the passage to $\R$ uses an increasing sequence of matrix subfactors, a weak*-compactness argument for completely positive maps, and a Stinespring compression argument, see Section \ref{sec:hyperfinite}. Finally, in Section \ref{sec:applications} we record some applications of the main result, including the isolation phenomenon for $\R$ mentioned above. We end with a section on open problems, see Section \ref{sec:open-problems}. 

\section{Preliminaries}\label{sec:preliminaries}

\subsection{Definitions and notation}

In continuous logic, a tracial von Neumann algebra is modelled by its unit ball with respect to the operator norm, equipped with the normalized trace norm. It is thus natural to study stability problems in exactly this setting.

For a tracial von Neumann algebra $(M,\tau)$, we write
\[
M_{\le 1}:=\{x\in M:\|x\|\le 1\},
\qquad
\|x\|_{2,\tau}:=\tau(x^*x)^{1/2}.
\]

\begin{definition}\label{def:matrix-approx-star}
Let $(N,\tau_N)$ and $(M,\tau_M)$ be tracial von Neumann algebras, and let $\varphi\colon N_{\le 1}\to M_{\le 1}$ be a map. We say that $\varphi$ is an \emph{$L^2$-$\varepsilon$-$*$-homomorphism} if for all $x,y\in N_{\le 1}$ with $x+y\in N_{\le 1}$ and all $\lambda\in\bbC$ with $|\lambda|\le 1$ one has
\begin{align*}
\|\varphi(x+y)-\varphi(x)-\varphi(y)\|_{2,\tau_M} &\le \varepsilon,\\
\|\varphi(\lambda x)-\lambda\varphi(x)\|_{2,\tau_M} &\le \varepsilon,\\
\|\varphi(xy)-\varphi(x)\varphi(y)\|_{2,\tau_M} &\le \varepsilon,\\
\|\varphi(x^*)-\varphi(x)^*\|_{2,\tau_M} &\le \varepsilon.
\end{align*}
If in addition
\[
\|\varphi(1)-1\|_{2,\tau_M}\le \varepsilon,
\]
then we call $\varphi$ \emph{$\varepsilon$-unital}.
\end{definition}

A strong Ulam stability result for $(N,\tau_N)$ would say that for every $\varepsilon>0$ there exists $\delta>0$ such that if $\varphi\colon N_{\le 1}\to M_{\le 1}$ is a $\delta$-unital $L^2$-$\delta$-$*$-homomorphism, then there exists a genuine $*$-homomorphism $\psi\colon N\to M$ such that $\varphi$ and $\psi$ are $\varepsilon$-close in the trace norm on $N_{\le 1}$. Such a strong stability result cannot hold for infinite-dimensional von Neumann algebras, see Proposition \ref{P.compression-almost-homomorphism-not-close-to-isomorphism}. However, our main result, Theorem \ref{T.amplified.stability}, is a precise formulation of a weaker stability result, which allows for a small amplification of the target algebra.

\begin{definition} Let $(M,\tau)$ be a tracial von Neumann algebra. We say that $(M,\tau)$ is \emph{Ulam stable} if for every $\varepsilon>0$ there exists $\delta>0$ such that if $(N,\tau_N)$ is a II$_1$-factor von Neumann algebra and $\varphi\colon M_{\le 1}\to N_{\le 1}$ is a $\delta$-unital $L^2$-$\delta$-$*$-homomorphism, then there exist $t\in(1-\varepsilon,1+\varepsilon)$ and a unital normal $*$-homomorphism $\psi\colon M\to N^t$ such that 
$$ \sup_{\|x\|\le 1} \|\varphi(x)-\psi(x)\|_{2,\tau_N^t}\le \varepsilon. $$
\end{definition}

\begin{remark}
We restrict the target algebra in the above definition to II$_1$-factors only
in order to keep the amplification appearing in the conclusion notationally
simple.  This does not amount to a substantial restriction.  Indeed, by a
classical result \cite{ElliottHandelman1981}, every tracial von Neumann
algebra admits a trace-preserving embedding into a II$_1$-factor.
\end{remark}

Our main result, Theorem \ref{T.amplified.stability}, can be stated as follows:

\begin{theorema}\namedlabel{tha}{A}
The hyperfinite {\rm II}$_1$-factor is Ulam stable.
\end{theorema}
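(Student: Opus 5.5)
The plan follows the outline in the introduction, with a matrix-algebra stability step that is uniform in the dimension followed by a passage to $\R$.

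\textbf{Step 1: trace preservation.} We first show that a $\delta$-unital $L^2$-$\delta$-$*$-homomorphism $\varphi\colon N_{\le1}\to M_{\le1}$ almost preserves the trace, i.e.\ $|\tau_M(\varphi(x))-\tau_N(x)|\le C\delta^{c}$ on $N_{\le 1}$. For $N=M_k(\bbC)$ the idea is as follows. Approximate multiplicativity and $*$-preservation imply that $\varphi(e_{ii})$ are approximate projections that are approximately orthogonal and approximately sum to $1$. The partial isometries $\varphi(e_{ij})$ then approximately intertwine them, so their traces are approximately equal to $1/k$. All of these error bounds must be independent of $k$. To get this, we work with unitaries (permutation matrices, diagonal unitaries) rather than with sums over matrix units.

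\textbf{Step 2: the matrix case (Theorem B).} Restricting $\varphi$ to $\Unitary(k)$ gives a map into $\Unitary(M)$, after polar correction. This map is an $L^2$-approximate representation with error $O(\delta)$. We then apply the de Chiffre--Ozawa--Thom stability theorem for compact groups in $2$-norms. It produces a genuine unitary representation $\pi$ of $\Unitary(k)$ on an amplification $pM^{s}p$ that is close to $\varphi$ in $\|\cdot\|_2$, where the corner $p$ has trace close to $1$. Next we decompose $\pi$ into irreducibles. A representation close to $\varphi$ must be nearly ``linear'': the additivity and homogeneity conditions force almost all of its mass onto copies of the standard representation $\bbC^k$. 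The argument uses Casimir eigenvalues. The standard representation has the smallest Casimir value among nontrivial irreducibles, and there is a gap that is uniform in $k$. An averaging identity, which compares $\varphi(\sum_j g_j)$ with $\sum_j \varphi(g_j)$ for suitable unitaries $g_j$ (for instance generators of $\mathfrak{su}(k)$ exponentiated, or diagonal unitaries), shows that the spectral projection of the Casimir onto the standard eigenvalue has trace close to $1$. Cutting down to the isotypic component of the standard representation gives a $*$-homomorphism $M_k(\bbC)\to M^{t}$. It is close to $\varphi$ on unitaries, and hence on all of $M_k(\bbC)_{\le1}$, because every contraction is an average of four unitaries. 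Here $t\to1$, and all constants are independent of $k$.

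\textbf{Step 3: passage to $\R$.} We write $\R=\overline{\bigcup_n M_{2^n}(\bbC)}$. Step 2 gives $*$-homomorphisms $\psi_n\colon M_{2^n}\to M^{t_n}$, each $\eps$-close to $\varphi$ with a uniform $\eps$. These need not be compatible across different $n$. To handle this, we extend each $\psi_n$ (compressed and normalized) to a ucp map on $\R$ via the conditional expectation onto $M_{2^n}$. We then take a point-weak$^*$ limit point $\Phi\colon \R\to M^{t}$, using a fixed amplification, which is legitimate since the $t_n$ lie in a compact interval. The map $\Phi$ is normal, $\tau$-almost-preserving, and close to $\varphi$ in $2$-norm on each matrix unit ball. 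Closeness is preserved in the limit because $2$-norm balls are weak$^*$ closed.

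Since $\Phi$ is $2$-norm close to $\varphi$, which is approximately multiplicative, the multiplicative domain defect $\Phi(x^*x)-\Phi(x)^*\Phi(x)$ is small in trace. We then apply a Stinespring argument. Take the Stinespring dilation $\Phi=V^*\rho(\cdot)V$ inside an amplification of $M$. Since $\|\rho(x)V-V\Phi(x)\|_2$ is small, the projection $VV^*$ almost commutes with $\rho(\R)$. We average over the unitary group of a large matrix subalgebra, or use injectivity and hypertrace averaging, to replace $VV^*$ by a projection $q$ commuting with $\rho(\R)$. This gives a genuine normal $*$-homomorphism $x\mapsto q\rho(x)q$ into $qM^{s}q\cong M^{t'}$ with $t'$ close to $1$. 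Closeness to $\varphi$ then holds on the dense union of matrix algebras, and hence on all of $\R_{\le1}$ by Kaplansky density together with uniform continuity. Uniform continuity holds because $\varphi$ is close to a genuine homomorphism on each matrix level, although we must be careful here, since $\varphi$ itself is not continuous.

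\textbf{Main obstacle.} The hardest step is the dimension-free representation-theoretic part of Step 2: showing that only the standard representation, and not its contragredient, tensor powers or trivial summands, can carry the mass, with bounds independent of $k$. I would first try the Casimir gap, combined with testing approximate additivity on one-parameter subgroups $\exp(itH)$ for Hermitian $H$ of bounded norm. Small-$t$ linearity forces the generator of $\pi$ to be close to a linear image of $H$, and this pins down the Casimir eigenvalue.
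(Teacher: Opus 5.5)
Your architecture matches the paper's: trace control, polar correction to unitaries, de Chiffre--Ozawa--Thom on ${\rm U}(k)$, a Casimir gap, the four-unitary extension, conditional expectations with a weak$^*$ limit of ucp maps, and Stinespring plus averaging to an invariant projection. The core of Theorem~B, however, is missing. You give no workable mechanism that bounds $\Tr(e-Q_{\rm st})$ uniformly in $k$, and the ones you sketch do not work as stated.

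\begin{itemize}
\item \emph{Small-$t$ linearization.} The additivity and homogeneity defects are absolute ($\delta$ in $\|\cdot\|_2$), so linearizing $\pi(\exp(itH))$ only means something for $t\gg\delta$. At such $t$ the quotient $(\pi(e^{itH})-e)/t$ is bounded by $2/t$ on summands where $t\|d\sigma(H)\|$ is large. So it fails to see the generator precisely on the high-Casimir summands you want to exclude.
\item \emph{Averaging identities.} Identities $\varphi(\sum_j g_j)\approx\sum_j\varphi(g_j)$ cost an error proportional to the number of terms. Anything using on the order of $k$ unitaries is therefore not dimension-free.
\item \emph{Casimir alone.} The Casimir is an $\SU(k)$-invariant and cannot make the selection by itself. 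The contragredient and all twists ${\rm st}\otimes{\det}^m$ have normalized Casimir exactly $1$. The summands ${\det}^m$ (trivial on $\SU(k)$) have Casimir $0$. All of these must still be excluded, so ``smallest Casimir among nontrivial irreducibles'' is false and is not the right criterion.
\end{itemize}

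The missing idea is to feed your Step~1 into Step~2 through characters.
\begin{itemize}
\item \emph{Character estimate.} From $\sup_u\|\varphi(u)-\pi(u)\|_2$ small, $\|e-p\|_2$ small and trace control, one gets $|\Tr(\pi(u))-\tau_k(u)|\le\eps'$ for \emph{every} $u\in{\rm U}(k)$. That is, $\bigl|\sum_\sigma p_\sigma\chi_\sigma(u)/\dim\sigma-\tau_k(u)\bigr|\le\eps'$, where $p_\sigma$ is the trace of the $\sigma$-isotypic projection.
\item \emph{Central character.} Homogeneity on scalars gives $\pi(zI_k)\approx ze$. Integrating $\|\pi(zI_k)-ze\|_2^2$ over $z\in\mathbb T$ shows that summands with central character $z\mapsto z^m$, $m\neq1$, have total mass $p_0\le\eps'^2/2$. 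This is what removes the contragredient and the ${\det}$-twists.
\item \emph{Heat kernel.} Integrate the character estimate against the heat kernel $h_t$ of $\SU(k)$. Since $h_t\ge0$ has integral $1$, the error stays $\eps'$, and character orthogonality gives $\bigl|\sum_\sigma p_\sigma e^{-tc_\sigma}-e^{-t}\bigr|\le\eps'$.
\item \emph{Conclusion.} Combine this with $\sum_\sigma p_\sigma=\Tr(e)\approx1$, $c_{\rm st}=1$, and $c_\sigma\ge2$ for the other summands with defining central character (Lemma~\ref{app:lem-casimir-gap}). This yields $p_{\rm bad}(e^{-1}-e^{-2})\le 2\eps'+p_0$, independently of $k$.
\end{itemize}

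Three smaller points.
\begin{itemize}
\item Your Step~1 sketch (``traces approximately $1/k$'') is vacuous for large $k$. The paper instead groups projections into boundedly many equivalent blocks.
\item In Step~3 you should move each matrix homomorphism unitally into $M$ itself: take a trace-one projection in the relative commutant and conjugate it to $p$ by a nearby partial isometry. Then the approximants are exactly trace-preserving unital maps into a fixed algebra. This makes the weak$^*$ limit unital, so the Stinespring $V$ is an isometry, and exactly trace-preserving, hence normal.
\item The passage from $\bigcup_n A_n$ to $\R_{\le1}$ rests on the coarse Lipschitz bound $\|\varphi(x)-\varphi(y)\|_2\le\|x-y\|_2+\eta(\eps)$. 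This comes from trace control on $\R$ itself, not from closeness at each matrix level.
\end{itemize}
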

Along the way, we will also establish the following stability result for matrix algebras (see Theorem \ref{thm:matrix-stability-reduction} for the precise formulation):
\begin{theoremb}\namedlabel{thb}{B}
Matrix algebras are Ulam stable, uniformly in matrix size.
\end{theoremb}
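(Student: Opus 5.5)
Theorem B says: for every $\varepsilon>0$ there is $\delta>0$, independent of $n$, such that every $\delta$-unital $L^2$-$\delta$-$*$-homomorphism $\varphi\colon (M_n(\bbC))_{\le1}\to N_{\le1}$ into a II$_1$-factor is $\varepsilon$-close on the unit ball to a unital $*$-homomorphism $\psi\colon M_n(\bbC)\to N^t$ with $|t-1|<\varepsilon$. I would prove this in four steps, following the outline in the introduction.

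\textbf{Step 1 (trace and unitaries).} First show that $\varphi$ approximately preserves the trace, i.e. $|\tau_N(\varphi(x))-\tr_n(x)|\le C\delta$ uniformly in $n$. Approximate additivity and multiplicativity make $\varphi(e)$ close in $\|\cdot\|_2$ to a projection for each projection $e$. Orthogonal projections are sent to almost orthogonal ones. Unitary conjugation $e\mapsto ueu^*$ is almost preserved, and trace is conjugation invariant. Applying this to a decomposition of $1$ into $n$ conjugate rank-one projections, the errors add in $\|\cdot\|_2^2$ rather than linearly, because the images are almost orthogonal. So $\tau_N(\varphi(e))\approx 1/n$ with an error that does not grow with $n$. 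Next restrict $\varphi$ to $\mathrm U(n)$. Each $\varphi(u)$ is an almost unitary contraction, so polar decomposition gives a unitary $\pi(u)\in\Unitary(N)$. The map $\pi$ is then a $C\delta$-representation in the normalized Hilbert--Schmidt (trace) norm, uniformly over $\mathrm U(n)$.

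\textbf{Step 2 (compact-group stability).} Next apply the de Chiffre--Ozawa--Thom machinery for compact groups. It says that a uniform $\eta$-representation of a compact group into $\Unitary(N)$, in the $2$-norm, is $O(\eta)$-close to a genuine representation $\rho$ into $\Unitary(pN^{s}p)$ for a projection $p$ with $\tau(p)$ within $O(\eta)$ of $1$. This is the source of the amplification $t$. The proof averages $\pi(g)^*\,\cdot\,\pi(g)$ over Haar measure and iterates; no dependence on $n$ enters, since only the group's Haar measure is used.

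\textbf{Step 3 (identifying the representation) — the main obstacle.} Decompose $\rho$ into irreducibles of $\mathrm U(n)$. We must show that most of the trace mass sits on copies of the standard representation, so that $\rho$ extends, up to a small corner, to a unital $*$-homomorphism of $M_n(\bbC)$. The key input is the approximate additivity of $\varphi$. Since $\varphi(u)+\varphi(v)\approx\varphi(u+v)$, the contractivity constraint $\|\varphi(x)\|\le1$ together with linearity forces $\rho$ to be approximately linear on $\mathrm U(n)$. Test this on the Lie algebra: for small $X\in\mathfrak u(n)$, compare $\rho(e^{X})$ with the linear prediction. Averaging over conjugation gives an $L^2$-estimate of the Casimir operator $\sum \rho(X_i)^2$ against its value on the defining representation. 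The uniform (in $n$) gap between the Casimir eigenvalue of the standard representation and those of all other irreducibles, suitably normalized, then forces the non-standard isotypic part to have trace $O(\delta)$. I expect this gap estimate, and making the step from approximate linearity to a Casimir bound dimension-free, to be the hardest part. I would first try using $x=\tfrac12(u+v)$ for unitaries $u,v$ near $1$, which gives a second-order estimate.

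\textbf{Step 4 (conclusion).} Cut down to the standard isotypic projection $q$. On $qN^tq$, $\rho$ is a multiple of the standard representation, so it extends to a unital $*$-homomorphism $\psi$ of $M_n(\bbC)=\mathrm{span}\,\mathrm U(n)$ after rescaling $t$. Every contraction $x$ is the average of four unitaries, $x=\tfrac12\bigl(\tfrac{x+x^*}{2}\bigr)+\dots$, with coefficients bounded by $1$. Approximate linearity of $\varphi$ then transfers the unitary closeness to all of $(M_n)_{\le1}$, with the constant independent of $n$.
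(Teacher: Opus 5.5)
Your four-step architecture matches the paper's, but Step 3 has a genuine gap. The claimed ``uniform gap between the Casimir eigenvalue of the standard representation and those of all other irreducibles'' is false. The contragredient $u\mapsto\bar u$ has exactly the same Casimir eigenvalue, since complex conjugation is an automorphism of ${\rm U}(n)$ preserving every $\Ad$-invariant inner product. The twists $\det^k\otimes\mathrm{st}$ have the same $\mathfrak{su}(n)$-Casimir, and the trivial representation and the characters $\det^k$ have eigenvalue $0$. So matching an averaged Casimir against its standard value cannot force the mass onto the standard component. Entrywise conjugation $x\mapsto\bar x$ on ${\rm M}_n(\bbC)$ shows this is a real obstruction. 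It is additive, multiplicative, unital, $*$-preserving, $\|\cdot\|_2$-isometric and homogeneous for real scalars, and on ${\rm U}(n)$ it is the contragredient representation. Yet it is far from every $*$-homomorphism already at $x=iI_n$, where it gives $-iI_n$. Any estimate built from real-linear relations such as $x=\tfrac12(u+v)$ only yields $\mathrm{Re}\,\Tr\rho(w)\approx\mathrm{Re}\,\tau_n(w)$, and so cannot tell $\sigma$ from $\bar\sigma$.

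The missing idea is to use complex homogeneity to pin down the central character. From $\varphi(zx)\approx z\varphi(x)$ you get $\sup_{z\in\mathbb T}\|\rho(zI_n)-ze\|_2\le\eta$, with $e=\rho(1)$. Integrating over $\mathbb T$ then shows that the components on which the center acts by $z\mapsto z^k$, $k\neq1$, have total trace at most $\eta^2/2$. Only among irreducibles with central character $z\mapsto z$ is there a gap: there $|\mu|\equiv1\pmod n$, and the normalized $\mathfrak{su}(n)$-Casimir is $1$ for the standard representation and at least $2$ otherwise (Lemma~\ref{app:lem-casimir-gap}).

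The small-$X$ Taylor route also cannot be made uniform. The error $\eta$ between $\rho$ and $\varphi$ does not shrink with $X$, so a second-order comparison needs $\|X\|_2^2\gg\eta$. Meanwhile $d\sigma(X)$ is unbounded as the highest weight grows, so the higher-order terms, and the trace of the Casimir on the bad part, are uncontrolled. The paper instead uses what you already have after Steps 1--2. Trace control together with $\rho\approx\varphi$ gives the character estimate $|\Tr\rho(u)-\tau_n(u)|\le\eta$ for all $u\in{\rm U}(n)$ (Corollary~\ref{C.matrix-unitary-correction}). Integrating this against the heat kernel $h_t$ on ${\rm SU}(n)$ diagonalizes it exactly, $\sum_\sigma p_\sigma e^{-tc_\sigma}\approx e^{-t}$. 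This gives $p_{\rm bad}(e^{-t}-e^{-2t})\le 2\eta+p_0(1-e^{-t})$, and one takes $t=1$ (Theorem~\ref{T.standard-representation-extraction}).

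Two smaller points. In Step 1 the additivity defects are not orthogonal, so summing $n$ rank-one pieces costs $(n-1)\delta$, and ``errors add in $\|\cdot\|_2^2$'' is unjustified. The paper instead uses at most $2L$ equal-rank blocks, together with $|\tau_N(\varphi(e))|\le 1/L+3\varepsilon$ for small $e$ (fit $L$ orthogonal equivalent copies under a contraction). In Step 2, Haar averaging needs measurability, and ${\rm U}(n)$ is not amenable as a discrete group. The paper obtains a Borel approximation from the approximate isometry of Corollary~\ref{cor:quasi-isometry-lipschitz} (Lemma~\ref{app:lem-measurable-approx}), and then a continuous kernel via Dixmier--Pettis.
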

Theorem B is the core of the paper and the main technical result.

\medskip

Let us first clarify that a stronger form of Ulam stability without amplification fails in general. Let $(M,\tau)$ be a $\mathrm{II}_1$-factor, let $p\in M$ be a non-zero
projection, and put
\[
        t:=\tau(p)>0,
        \qquad
        \tau_p(x):=\frac{\tau(x)}{t}
        \quad (x\in pMp).
\]
Let
$
        \varphi_p:M_{\leq 1}\to (pMp)_{\leq 1},$
        $\varphi_p(x):=pxp
$
be the compression map.

\begin{proposition}
\label{P.compression-almost-homomorphism-not-close-to-isomorphism}
The map $\varphi_p \colon M_{\leq 1} \to (pMp)_{\leq 1}$ is a unital
$L^2$-$\delta$-$*$-homomorphism with respect to the normalized trace
$\tau_p$, where
$
        \delta:=\left(1/t-1\right)^{1/2}.
$
Moreover, if $t<1$ and
$
        \theta \colon M\to pMp
$
is a unital $*$-isomorphism, then
\[
        \sup_{\|x\|\leq 1}
        \|\theta(x)-\varphi_p(x)\|_{2,\tau_p}
        \geq 1 .
\]
\end{proposition}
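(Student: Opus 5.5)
The plan has two parts: a direct verification of the four approximate relations for $\varphi_p$, and for the second claim a reduction to a lower bound on a single scalar quantity attached to a unitary, obtained by averaging over a large finite family of unitaries. Throughout, $\|\cdot\|_2$ means $\|\cdot\|_{2,\tau}$ on $M$.

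\emph{First claim.} $\varphi_p$ is complex-linear, so the additivity and scalar relations hold with error $0$. Likewise $\varphi_p(x^*)=\varphi_p(x)^*$ exactly, and $\varphi_p(1)=p$ is the unit of $pMp$, so $\varphi_p$ is unital. Only multiplicativity needs an estimate. For $x,y\in M_{\le1}$,
\[
\varphi_p(xy)-\varphi_p(x)\varphi_p(y)=px(1-p)yp .
\]
Since $\|px\|\le1$, we get
\[
\|px(1-p)yp\|_{2,\tau_p}^2=\tfrac1t\,\tau\bigl(py^*(1-p)x^*px(1-p)yp\bigr)\le \tfrac1t\|(1-p)yp\|_2^2 .
\]
Using $\|y\|\le1$ we have $\|(1-p)yp\|_2^2\le \|1-p\|_2^2=1-t$. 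Hence the error is at most $((1-t)/t)^{1/2}=\delta$, as claimed.

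\emph{Second claim: reduction.} Since $M$ and $pMp$ are $\mathrm{II}_1$-factors with unique traces, the isomorphism $\theta$ satisfies $\tau_p\circ\theta=\tau$. In particular $\|\theta(u)\|_{2,\tau_p}=1$ for every unitary $u\in M$. For such $u$,
\[
\|\theta(u)-pup\|_{2,\tau_p}^2 = 1+\|pup\|_{2,\tau_p}^2-2\,\mathrm{Re}\,\tau_p\bigl(\theta(u)^*pup\bigr).
\]
So it suffices to find unitaries $u$ with $\mathrm{Re}\,\tau_p(\theta(u)^*pup)\le \eta$ for arbitrarily small $\eta>0$. This gives $\sup\ge(1-2\eta)^{1/2}$, hence $\sup\ge1$.

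\emph{Second claim: averaging.} To produce such $u$, I would average over a finite group of unitaries. Choose a unital copy of $M_n(\bbC)\subset M$ and its Weyl (clock-and-shift) unitaries $W_g$, $g\in(\mathbb Z/n)^2$. Then $\theta(W_g)$ are Weyl unitaries for the copy $\theta(M_n(\bbC))\subset pMp$. Consider the average
\[
\frac1{n^2}\sum_g \tau_p\bigl(\theta(W_g)^*\,p\,W_g\,p\bigr)
=\tau_p\Bigl(p\,\Bigl(\frac1{n^2}\sum_g W_g\,p\,\theta(W_g)^*\Bigr)\Bigr).
\]
One can view this as the inner product of $p$ with the image of $p$ under the "twisted" averaging map $y\mapsto \frac1{n^2}\sum_g \theta(W_g)^* y W_g$ on $L^2(pM)$. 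This map is a contraction onto the space of intertwiners between the two matrix copies, essentially a slice-map/conditional-expectation computation for $M_n\otimes M_n^{\mathrm{op}}$. I expect it to be of size $O(1/n)$, because the Weyl averaging decorrelates the two copies. Then some single $W_g$ has $\mathrm{Re}\,\tau_p(\theta(W_g)^*pW_gp)=O(1/n)$, and letting $n\to\infty$ gives the bound $\ge1$.

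\emph{Main obstacle.} The difficult step is the estimate of this twisted average, equivalently ruling out large intertwiners between $x\mapsto x$ and $x\mapsto\theta(x)$ restricted to the Weyl group. If the direct computation does not give smallness, I would fall back on two alternatives. The first, when $t\le1/2$, is to pick a unitary $u$ with $pup=0$; such $u$ exists by conjugating $p$ to a subprojection of $1-p$. Then the norm is exactly $1$. The second is to replace the finite group by powers $u^k$ of a Haar unitary and use the mean ergodic theorem for $y\mapsto\theta(u)^*yu$. In that case the task is to choose $u$ with no nonzero intertwiner to $\theta(u)$ after compression by $p$.
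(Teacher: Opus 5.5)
Your treatment of the first claim is correct and is essentially the paper's argument; you bound by $\|(1-p)yp\|_2$ where the paper uses $\|px(1-p)\|_{2,\tau_p}$. Your reduction of the second claim to finding unitaries $u$ with $\mathrm{Re}\,\tau_p(\theta(u)^*pup)$ small is also sound.

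\textbf{The gap.} The gap is the step meant to produce such $u$. The assertion that the twisted Weyl average is $O(1/n)$ is false for an arbitrary unital copy of ${\rm M}_n(\bbC)$. Take $M=\R\cong{\rm M}_n(\bbC)\otimes\R$, $p=1\otimes p_0$ and $\theta=\mathrm{id}\otimes\theta_0$, where $\theta_0\colon\R\to p_0\R p_0$ is an isomorphism; this exists for every $t\in(0,1)$ since $\R$ has fundamental group $\mathbb{R}_{>0}$. For $A={\rm M}_n(\bbC)\otimes 1$ one has $\theta(W_g)=W_gp=pW_gp$. So every term $\tau_p(\theta(W_g)^*pW_gp)$ equals $1$, for every $n$. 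Any successful choice of matrix subalgebra would have to be adapted to $\theta$, and your proposal gives no mechanism for that.

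The first fallback covers only $t\le 1/2$. For every unitary $u$ one has $\|pup\|_{2,\tau_p}^2=1-\|(1-p)up\|_{2,\tau}^2/t\ge(2t-1)/t$, so $pup=0$ is impossible once $t>1/2$. The mean-ergodic fallback merely restates the problem. Hence the case $t\in(1/2,1)$ is not handled.

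\textbf{The missing idea.} What is missing is to exploit the iterates of $\theta$ itself, which is what the paper does. Put $e_n:=\theta^n(1)$. These projections decrease with $\tau(e_n)=t^n$. The differences $d_n:=e_n-e_{n+1}$ are pairwise orthogonal projections with $\theta(d_n)=d_{n+1}$ and $\sum_{n\ge1}d_n=p$.

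In your framework you can take the self-adjoint unitary $u:=(1-p)+\sum_{n\ge1}(-1)^nd_n$. Since $\theta$ is normal and $\theta(1-p)=p-\theta(p)=d_1$, one gets $\theta(u)=-pup$. Thus $\mathrm{Re}\,\tau_p(\theta(u)^*pup)=-1$ and $\|\theta(u)-pup\|_{2,\tau_p}=2$.

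The paper works instead with the finite alternating sums $x_N=\sum_{n=1}^N(-1)^nd_n\in pMp$, for which $\varphi_p(x_N)=x_N$. It computes that $\|\theta(x_N)-x_N\|_{2,\tau_p}^2\to 1+3t\ge 1$. No averaging and no choice of matrix subalgebra is needed.
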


\begin{proof}
The map $\varphi_p$ takes contractions to contractions. It is homogeneous, additive, and $*$-preserving:
\[
        \varphi_p(\lambda x)=\lambda\varphi_p(x),
        \qquad
        \varphi_p(x+y)=\varphi_p(x)+\varphi_p(y),
        \qquad
        \varphi_p(x^*)=\varphi_p(x)^* .
\]
Moreover,
$
        \varphi_p(1)=p,
$
which is the unit of $pMp$. Thus $\varphi_p$ is unital as a map into
$pMp$.
We now estimate the multiplicative defect. For $x,y\in M_{\leq 1}$,
\[
        \varphi_p(xy)-\varphi_p(x)\varphi_p(y)
        =
        pxyp-pxpyp
        =
        px(1-p)yp .
\]
Therefore
\[
        \|\varphi_p(xy)-\varphi_p(x)\varphi_p(y)\|_{2,\tau_p}
        \leq
        \|px(1-p)\|_{2,\tau_p}.
\]
Since $\|x\|\leq 1$, we get
\[
   \|px(1-p)\|_{2,\tau_p}^2 = \frac{1}{t}\,\tau\bigl((1-p)x^*px(1-p)\bigr) \leq \frac{1}{t}\,\tau(1-p) = \frac{1-t}{t}.
\]
Hence
\[
        \|\varphi_p(xy)-\varphi_p(x)\varphi_p(y)\|_{2,\tau_p}
        \leq
        \left(\frac{1-t}{t}\right)^{1/2}
        =
        \delta .
\]

Assume now that $t<1$ and that
$
        \theta:M\to pMp
$
is a unital $*$-isomorphism. We regard $\theta$ as a
non-unital endomorphism of $M$ whose range lies in $pMp$. Thus
$
        \theta(1)=p .
$
Define projections
\[
        e_0=1,
        \qquad
        e_{n+1}=\theta(e_n)
        \quad(n\geq 0).
\]
Then
$e_0=1$ and $e_1=\theta(1)=p$. Moreover, since $\theta$ is positive,
\[
        e_{n+1}\leq e_n
        \qquad(n\geq 0).
\]
Indeed, this is clear for $n=0$, and if $e_n\leq e_{n-1}$, then
$
        e_{n+1}=\theta(e_n)\leq \theta(e_{n-1})=e_n .
$
Thus
$
        1=e_0\geq e_1\geq e_2\geq\cdots
$
is a decreasing sequence of projections.

Since $\theta$ is trace-preserving from $(M,\tau)$ to $(pMp,\tau_p)$, we have
\[
        \tau_p(\theta(x))=\tau(x)
        \qquad(x\in M).
\]
Equivalently,
$
        \tau(\theta(x))=t\tau(x).
$
Therefore
$\tau(e_n)=t^n$ for all $n\geq 0$.
Set
$d_n=e_n-e_{n+1}$ for all $n\geq 0$.
Then the $d_n$ are pairwise orthogonal projections, and
$
        \theta(d_n)
        =
        \theta(e_n-e_{n+1})
        =
        e_{n+1}-e_{n+2}
        =
        d_{n+1}.
$
For $n\geq 1$, we have $d_n\leq p$, and therefore
\[
        \tau_p(d_n)
        =
        \frac{\tau(d_n)}{t}
        =
        \frac{t^n-t^{n+1}}{t}
        =
        (1-t)t^{n-1}.
\]

For $N\geq 1$, define
\[
        x_N=\sum_{n=1}^N (-1)^n d_n .
\]
Since the $d_n$ are pairwise orthogonal projections, $x_N$ is self-adjoint and
$
        \|x_N\|\leq 1.
$
Also $x_N\in pMp$, because $d_n\leq p$ for all $n\geq 1$. Hence
$
        \varphi_p(x_N)=px_Np=x_N .
$
On the other hand,
\[
        \theta(x_N)
        =
        \sum_{n=1}^N (-1)^n d_{n+1}.
\]
Thus
\[
\begin{aligned}
        \theta(x_N)-\varphi_p(x_N)
        &=
        \theta(x_N)-x_N                                      \\
        &=
        \sum_{n=1}^N (-1)^n d_{n+1}
        -
        \sum_{n=1}^N (-1)^n d_n                               \\
        &=
        d_1
        +
        \sum_{n=2}^N 2(-1)^{n-1}d_n
        +
        (-1)^N d_{N+1}.
\end{aligned}
\]
Since the $d_n$ are pairwise orthogonal projections, we get
\[
\begin{aligned}
        \|\theta(x_N)-\varphi_p(x_N)\|_{2,\tau_p}^2
        &=
        \tau_p(d_1)
        +
        4\sum_{n=2}^N \tau_p(d_n)
        +
        \tau_p(d_{N+1})                                      \\
        &=
        (1-t)
        +
        4\sum_{n=2}^N (1-t)t^{n-1}
        +
        (1-t)t^N .
\end{aligned}
\]
Letting $N\to\infty$, we obtain
\[
\begin{aligned}
        \lim_{N\to\infty}
        \|\theta(x_N)-\varphi_p(x_N)\|_{2,\tau_p}^2
        &=
        (1-t)
        +
        4\sum_{n=2}^\infty (1-t)t^{n-1}              \\
        &=
        (1-t)+4t                                    \\
        &=
        1+3t .
\end{aligned}
\]
Since $\|x_N\|\leq 1$ for all $N$, it follows that
\[
        \sup_{\|x\|\leq 1}
        \|\theta(x)-\varphi_p(x)\|_{2,\tau_p}^2
        \geq 1.
\]
This finishes the proof.
\end{proof}

The first somewhat non-trivial step in the proof of our main result is to show that an $L^2$-$\varepsilon$-$*$-homomorphism from a finite factor approximately preserves the trace. This is the content of the following proposition.

\begin{proposition}
\label{P.uniform-trace-control-finite-factors}
There exists a function
$
\eta:[0,1]\to[0,\infty)$ with $\eta(\varepsilon)\to 0$ as $\varepsilon\to 0$,
with the following property.
Let \((M,\tau_M)\) be a finite factor and \((N,\tau_N)\) be a tracial von Neumann algebra. Let
$
\varphi:M_{\le 1}\to N_{\le 1}
$
be an \(\varepsilon\)-unital $L^2$-\(\varepsilon\)-\(*\)-homomorphism. Then, for all \(x\in M_{\le 1}\),
\[
\bigl|\tau_N(\varphi(x))-\tau_M(x)\bigr|\le \eta(\varepsilon).
\]
\end{proposition}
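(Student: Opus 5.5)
Write $\eta$ for a function of $\varepsilon$ alone that tends to $0$; it may grow at each step. The plan is to show that $x\mapsto\tau_N(\varphi(x))$ is, up to $\eta(\varepsilon)$, a tracial state on $M$. Then uniqueness of the trace on a finite factor forces it to be close to $\tau_M$. To keep everything independent of $M$, I would avoid compactness arguments over $M$ and use instead a quantitative form of uniqueness: every $x\in M_{\le1}$ is approximated by averages of unitary conjugates. This is the Dixmier property, made quantitative through the structure of finite factors.

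\emph{Step 1: approximate properties of $f(x):=\tau_N(\varphi(x))$.} Since $|\tau_N(a)|\le\|a\|_{2,\tau_N}$, the defining inequalities give $|f(x+y)-f(x)-f(y)|\le\varepsilon$, $|f(\lambda x)-\lambda f(x)|\le\varepsilon$ and $|f(1)-1|\le\varepsilon$. For approximate traciality, take a unitary $u\in M$ and $x\in M_{\le1}$. Then $\varphi(uxu^*)$ is within $2\varepsilon$ (in $\|\cdot\|_2$) of $\varphi(u)\varphi(x)\varphi(u^*)$, because all factors are contractions. The trace of the latter is $\tau_N(\varphi(x)\varphi(u^*)\varphi(u))$, and $\varphi(u^*)\varphi(u)$ is within $\varepsilon$ of $\varphi(1)$, hence within $2\varepsilon$ of $1$. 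This gives $|f(uxu^*)-f(x)|\le 4\varepsilon$. Additivity only applies when partial sums stay in the unit ball. So for a convex combination $\frac1k\sum_{j=1}^k u_jxu_j^*$, I would first rescale (approximate homogeneity with $\lambda=1/k$), then add $k$ terms of norm $\le 1/k$. The total error is $O(k\varepsilon)$. This is the crucial point: the error depends on the number $k$ of unitaries used, not on $M$.

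\emph{Step 2: uniform Dixmier averaging in finite factors.} This is the step I expect to be the main obstacle. I want a function $k(\delta)$, independent of the finite factor $M$, such that for every $x\in M_{\le1}$ there are unitaries $u_1,\dots,u_k$ with $\bigl\|\frac1k\sum u_jxu_j^*-\tau_M(x)1\bigr\|\le\delta$ in operator norm. It is enough to have the bound in $\|\cdot\|_2$, since $f$ is Lipschitz-like only in $\|\cdot\|_2$ up to $\varepsilon$.

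I would first reduce to self-adjoint $x$ by splitting $x=\mathrm{Re}\,x+i\,\mathrm{Im}\,x$, using approximate additivity and homogeneity. Next I would approximate $x$ in norm by a self-adjoint element with at most $m(\delta)$ spectral projections; in a II$_1$ factor, or in $M_n$ after a small perturbation, I can also arrange these projections to have equal traces up to a small remainder. The key case is $x=\sum_i c_ip_i$ with $p_1,\dots,p_m$ equivalent, summing to $1-r$ with $\tau(r)$ small. These projections sit as diagonal matrix units of a copy of $M_m$ inside $M$. Averaging over the $m$ cyclic permutation unitaries of that copy sends $x$ to $\frac1m\sum c_i\cdot(1-r)$ plus the remainder, which is close to $\tau_M(x)1$ in $\|\cdot\|_2$. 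For $M=M_n$ with $n$ not divisible by $m$, the leftover block has trace $<m/n$. When $n$ is small relative to $m$, I would instead use the exact unitary averaging formula in $M_n$ over $n$ permutations combined with diagonalisation, giving $k\le n\le$ a bound depending on $\delta$. Uniformity should come precisely from these case distinctions.

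\emph{Step 3: conclusion.} Combining the steps, $f(x)$ is within $O(k\varepsilon)+O(\delta)$ of $f(\tau_M(x)1)$. By approximate homogeneity and unitality, that is within $O(\varepsilon)$ of $\tau_M(x)$. Here I also use that $\|\varphi(a)-\varphi(b)\|_2$ is controlled by $\|a-b\|_2$ plus $O(\varepsilon)$. This holds because $\varphi(a-b)$ has norm at most $\|\varphi(a-b)^*\varphi(a-b)\|^{1/2}$, which via multiplicativity is approximately $\tau_N(\varphi((a-b)^*(a-b)))^{1/2}$; that is circular, so I would rather apply the averaging to exact norm approximations where possible. Finally I set $\eta(\varepsilon)=\inf_\delta\bigl(C k(\delta)\varepsilon+C\delta\bigr)$, which tends to $0$ as $\varepsilon\to0$.
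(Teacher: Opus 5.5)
You have a genuine gap at the junction of Steps~2 and~3. Step~1 is correct, and the overall strategy (approximate unitary invariance of $f=\tau_N\circ\varphi$, plus averaging with a number of unitaries independent of $M$) can be made to work. But the averaging you actually construct only yields $\|\cdot\|_2$-closeness to $\tau_M(x)1$. The remainder projection $r$ (and the leftover block in ${\rm M}_n$ for large $n$) is fixed by your cyclic permutation unitaries, so the average is $\bar c(1-r)+xr$, which is not close to a scalar in operator norm. For instance, if $x$ is a projection of irrational trace in a II$_1$ factor, then $r$ contains nonzero pieces under both $x$ and $1-x$, and the average has distance at least $1/2$ from every scalar.

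Converting $\|\cdot\|_2$-closeness into closeness of $f$-values is exactly the circular step you noticed yourself. The only handle is $|f(y)|^2\le\|\varphi(y)\|_{2,\tau_N}^2\approx f(y^*y)$, and using it requires knowing that $f$ is small on positive contractions of small trace, i.e.\ the proposition itself. In the paper, the $L^2$-Lipschitz estimate (Corollary~\ref{cor:quasi-isometry-lipschitz}) is deduced \emph{from} this proposition, not the other way round. So the claim ``it is enough to have the bound in $\|\cdot\|_2$'' is unjustified. The fallback ``apply the averaging to exact norm approximations where possible'' does not cover the generic case, where the remainder is unavoidable. Nothing in your proposal estimates $f(r)$ or $f(xr)$.

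Either of two ingredients closes the gap.

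(a) Replace your Step~2 by the classical Dixmier halving lemma, which gives operator-norm averaging uniformly over all factors. For $x=x^*\in M_{\le1}$ put $e=1_{[0,1]}(x)$. By comparison, $e\precsim 1-e$ or $1-e\precsim e$. A self-adjoint unitary $u$ exchanging $e$ (resp.\ $1-e$) with a subprojection of its complement gives $\frac12(x+uxu^*)$ with spectrum in $[-1,\frac12]$ (resp.\ $[-\frac12,1]$), hence within $\frac34$ of a scalar in norm. Iterating $j$ times gives $2^j$ unitaries and an average $a$ with $\|a-c1\|\le(3/4)^j$. Moreover $|c-\tau_M(x)|\le(3/4)^j$, because averaging preserves $\tau_M$. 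Writing $a-\tau_M(x)1=\delta w$ with $w\in M_{\le1}$ gives $|f(a)-\tau_M(x)|\le\delta+O(\varepsilon)$, and your Steps~1 and~3 then finish the proof with $k=2^j$.

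(b) Alternatively, keep your construction and add a second averaging that spreads the remainder. If $\tau_M(r)\le 1/L$, pick unitaries $w_1,\dots,w_L$ carrying $r$ onto pairwise orthogonal equivalent projections; this is possible in a finite factor. Since $y-\bar c1=(x-\bar c1)r$ is supported under $r$, the element $\frac1L\sum_j w_j(y-\bar c1)w_j^*$ has norm at most $2/L$. This is precisely the mechanism behind the paper's estimates (4) and (7), where $L$ orthogonal copies of a small projection are fitted into the unit ball.

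With fix (a), your argument becomes a genuinely different and rather cleaner route than the paper's. The paper first controls $\tau_N(\varphi(e))$ for projections by counting Murray--von Neumann equivalent pieces, with separate cases for small matrix algebras, large matrix algebras and diffuse factors. It then passes to general contractions by spectral step approximation. Dixmier averaging treats all finite factors at once, at the price of importing comparison theory for the halving lemma.
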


\begin{proof}
We first record two elementary estimates.
Let \(z_1,\ldots,z_m\in M_{\le 1}\), and suppose that all partial sums
\[
z_1+\cdots+z_k,\qquad 1\le k\le m,
\]
belong to \(M_{\le 1}\). Then repeated use of approximate additivity gives
\[
\left\|
\varphi\left(\sum_{j=1}^m z_j\right)-\sum_{j=1}^m\varphi(z_j)
\right\|_2
\le (m-1)\varepsilon.
\]
If additionally \(|\lambda_j|\le 1\) and all partial sums of
\(\lambda_jz_j\) belong to \(M_{\le 1}\), then approximate homogeneity gives
\[
\left\|
\varphi\left(\sum_{j=1}^m \lambda_jz_j\right)
-\sum_{j=1}^m\lambda_j\varphi(z_j)
\right\|_2
\le (2m-1)\varepsilon.
\tag{1}
\]

Next, equivalent projections have approximately equal image traces. Indeed,
if \(e,f\in M\) are projections and \(e\sim f\), choose a partial isometry
\(v\in M\) with \(v^*v=e\) and \(vv^*=f\). Then
\[
\begin{aligned}
\bigl|\tau_N(\varphi(e))-\tau_N(\varphi(f))\bigr|
&\le
\bigl|\tau_N(\varphi(e)-\varphi(v^*)\varphi(v))\bigr|  \\
&\quad+
\bigl|\tau_N(\varphi(v)\varphi(v^*)-\varphi(f))\bigr|  \\
&\le 2\varepsilon.
\end{aligned}
\tag{2}
\]
Here we used approximate multiplicativity and traciality of \(\tau_N\).

We now prove a uniform trace estimate for projections. Fix an integer
\(L\ge 2\). We distinguish three cases.

\smallskip

\noindent
\emph{Case 1: \(M=M_d(\mathbb C)\) with \(d<L^2\).}
Let \(e\in M_d(\mathbb C)\) be a projection of rank \(k\). If \(k=0\), then
\(\|\varphi(e)\|_2\le\varepsilon\) by approximate homogeneity, and there is
nothing to prove. Otherwise choose mutually orthogonal minimal projections
\(f_1,\ldots,f_d\) with sum \(1\), and put
$
f:=f_1+\cdots+f_k .
$
Then \(e\sim f\). Set
$
a_i:=\tau_N(\varphi(f_i)).
$
By (2), \(|a_i-a_j|\le 2\varepsilon\) for all \(i,j\). Moreover, by (1) and
\(\varepsilon\)-unitality,
\[
\left|\sum_{i=1}^d a_i-1\right|\le d\varepsilon.
\]
Therefore
\[
\left|\sum_{i=1}^k a_i-\frac{k}{d}\right|\le 3d\varepsilon.
\]
Using (1) for \(f=f_1+\cdots+f_k\), and then using \(e\sim f\), we obtain
\[
\bigl|\tau_N(\varphi(e))-\tau_M(e)\bigr|
\le (4d+2)\varepsilon
\le (4L^2+2)\varepsilon .
\tag{3}
\]

\smallskip

\noindent
\emph{Case 2: \(M=M_d(\mathbb C)\) with \(d\ge L^2\).}
We first control projections of small rank. Suppose \(e\) is a projection
with
$
\operatorname{rank}(e)\le \frac dL.
$
Then we can find pairwise orthogonal projections
$
e_1,\ldots,e_L
$
with \(e_1=e\) and \(e_i\sim e\) for all \(i\). Put
$
r:=e_1+\cdots+e_L.
$
By (1),
\[
\left|
\tau_N(\varphi(r))-\sum_{i=1}^L\tau_N(\varphi(e_i))
\right|
\le (L-1)\varepsilon.
\]
Since \(\|\varphi(r)\|\le 1\), we have \(|\tau_N(\varphi(r))|\le 1\). Together
with (2), this implies
\[
|\tau_N(\varphi(e))|
\le \frac1L+3\varepsilon .
\tag{4}
\]

Now put
$
s:=\left\lfloor d/L\right\rfloor,
$ and $
m:=\left\lfloor d/s\right\rfloor .
$
Then \(m\le 2L\), and the residual rank \(d-ms\) is strictly smaller than
\(s\). Choose pairwise orthogonal projections
$
g_1,\ldots,g_m,g_0
$
such that \(g_1,\ldots,g_m\) have rank \(s\), \(g_0\) has rank \(d-ms\),
and
$
g_1+\cdots+g_m+g_0=1.
$
Let \(g\) be any rank-\(s\) projection. Then \(g_i\sim g\) for
\(1\le i\le m\). Also \(\operatorname{rank}(g_0)<s\le d/L\), so (4) gives
\[
|\tau_N(\varphi(g_0))|\le \frac1L+3\varepsilon .
\]
Using (1), \(\varepsilon\)-unitality, and (2), we get
\[
\left|
m\tau_N(\varphi(g))-1
\right|
\le
\frac1L+(3m+4)\varepsilon .
\]
Since
\[
\left|\frac1m-\frac{s}{d}\right|\le \frac1{L^2},
\]
and \(m\ge L\), it follows that
\[
\left|
\tau_N(\varphi(g))-\tau_M(g)
\right|
\le
\frac{2}{L^2}+5\varepsilon .
\tag{5}
\]

Let \(e\in M_d(\mathbb C)\) now be arbitrary. Decompose
$
e=e_1+\cdots+e_k+e_0,
$
where \(e_1,\ldots,e_k\) have rank \(s\), \(e_0\) has rank \(<s\), and
\(k\le m\le 2L\). By (5) and (2),
\[
\left|
\tau_N(\varphi(e_i))-\tau_M(e_i)
\right|
\le
\frac{2}{L^2}+7\varepsilon
\qquad (1\le i\le k).
\]
Moreover, by (4),
\[
|\tau_N(\varphi(e_0))|\le \frac1L+3\varepsilon,
\qquad
\tau_M(e_0)<\frac1L.
\]
Using (1) for the decomposition of \(e\), we obtain
\[
\begin{aligned}
\bigl|\tau_N(\varphi(e))-\tau_M(e)\bigr|
&\le
k\left(\frac{2}{L^2}+7\varepsilon\right)
+\frac2L+3\varepsilon+k\varepsilon  \\
&\le
\frac6L+(16L+3)\varepsilon .
\end{aligned}
\tag{6}
\]

\smallskip

\noindent
\emph{Case 3: \(M\) is diffuse.}
We use the standard fact that in a diffuse finite factor, projections of
arbitrary prescribed trace exist, and projections of equal trace are
Murray-von Neumann equivalent.

First let \(e\in M\) be a projection with \(\tau_M(e)\le 1/L\). Then we can
find pairwise orthogonal projections
$
e_1,\ldots,e_L
$
with \(e_1=e\) and \(e_i\sim e\) for every \(i\). The same argument as above
gives
\[
|\tau_N(\varphi(e))|\le \frac1L+3\varepsilon .
\tag{7}
\]

Now let \(e\in M\) be arbitrary and write \(t:=\tau_M(e)\). Choose
\(k\in\{0,\ldots,L\}\) such that
\[
\frac{k}{L}\le t<\frac{k+1}{L}.
\]
Choose a subprojection \(f\le e\) with
$
\tau_M(f)=\frac{k}{L},
$
and write
$
r:=e-f.
$
Then \(\tau_M(r)<1/L\). Let \(h_1,\ldots,h_L\) be pairwise orthogonal
equivalent projections summing to \(1\), and put
$
h:=h_1+\cdots+h_k .
$
Then \(f\sim h\). Arguing as in Case 1, but with \(L\) instead of \(d\), gives
\[
\bigl|\tau_N(\varphi(f))-\tau_M(f)\bigr|
\le (4L+2)\varepsilon .
\]
Using approximate additivity for \(e=f+r\), together with (7), we get
\[
\begin{aligned}
\bigl|\tau_N(\varphi(e))-\tau_M(e)\bigr|
&\le
\bigl|\tau_N(\varphi(f))-\tau_M(f)\bigr|
+|\tau_N(\varphi(r))|+\tau_M(r)+\varepsilon  \\
&\le
\frac2L+(4L+6)\varepsilon .
\end{aligned}
\tag{8}
\]

Combining (3), (6), and (8), we have shown that for every finite factor
\(M\), every projection \(e\in M\), and every integer \(L\ge 2\),
\[
\bigl|\tau_N(\varphi(e))-\tau_M(e)\bigr|
\le
\max\left\{
(4L^2+2)\varepsilon,\,
\frac6L+(16L+3)\varepsilon
\right\}.
\]
Thus every projection satisfies
\[
\bigl|\tau_N(\varphi(e))-\tau_M(e)\bigr|
\le \beta_p(\varepsilon),
\tag{9}
\]
where we define
\[
\beta_p(\varepsilon)
:=
\inf_{L\ge 2}
\max\left\{
(4L^2+2)\varepsilon,\,
\frac6L+(16L+3)\varepsilon
\right\}.
\]
Clearly \(\beta_p(\varepsilon)\to 0\) as \(\varepsilon\to 0\).

We now pass from projections to positive contractions. Let \(0\le a\le 1\)
in \(M\), and fix \(m\ge 1\). Choose a spectral step approximation
\[
b=\sum_{j=1}^m \lambda_jp_j,
\qquad
0\le \lambda_j\le 1,
\]
where the \(p_j\)'s are pairwise orthogonal projections summing to \(1\), and
$
\|a-b\|\le \frac1m.
$
Put \(h:=a-b\). Then \(h=m^{-1}c\) for some contraction \(c\in M_{\le 1}\).
Approximate homogeneity and \(\varphi(M_{\le 1})\subseteq N_{\le 1}\) give
\[
|\tau_N(\varphi(h))|\le \frac1m+\varepsilon.
\]
Approximate additivity gives
\[
|\tau_N(\varphi(a))-\tau_N(\varphi(b))|
\le \frac1m+2\varepsilon.
\]
On the other hand, by (1) and (9),
\[
\begin{aligned}
|\tau_N(\varphi(b))-\tau_M(b)|
&\le
(2m-1)\varepsilon
+
\sum_{j=1}^m |\lambda_j|
\bigl|\tau_N(\varphi(p_j))-\tau_M(p_j)\bigr| \\
&\le
(2m-1)\varepsilon+m\beta_p(\varepsilon).
\end{aligned}
\]
Since \(|\tau_M(a)-\tau_M(b)|\le 1/m\), we obtain
\[
|\tau_N(\varphi(a))-\tau_M(a)|
\le
\frac2m+(2m+1)\varepsilon+m\beta_p(\varepsilon).
\]
Taking the infimum over \(m\ge 1\), we get a function
\[
\beta_+(\varepsilon)\to 0
\quad(\varepsilon\to 0)
\]
such that
$
|\tau_N(\varphi(a))-\tau_M(a)|\le \beta_+(\varepsilon)
$
for every positive contraction \(a\in M\).

Let now \(x=x^*\in M_{\le 1}\). Write
\[
x=x_+-x_-,
\qquad
0\le x_\pm\le 1,
\qquad
x_+x_-=0.
\]
Approximate additivity and approximate homogeneity give
\[
\|\varphi(x)-\varphi(x_+)+\varphi(x_-)\|_2\le 2\varepsilon.
\]
Hence
\[
|\tau_N(\varphi(x))-\tau_M(x)|
\le 2\beta_+(\varepsilon)+2\varepsilon.
\]
Finally, for arbitrary \(x\in M_{\le 1}\), write
\[
x=a+ib,
\qquad
a=a^*,\quad b=b^*,\quad \|a\|,\|b\|\le 1.
\]
Approximate additivity and approximate homogeneity give
\[
\|\varphi(x)-\varphi(a)-i\varphi(b)\|_2\le 2\varepsilon.
\]
Therefore
\[
|\tau_N(\varphi(x))-\tau_M(x)|
\le 4\beta_+(\varepsilon)+6\varepsilon.
\]
Set
$
\beta(\varepsilon):=4\beta_+(\varepsilon)+6\varepsilon .
$
This proves the uniform trace estimate.
\end{proof}

\begin{corollary}
\label{cor:quasi-isometry-lipschitz}
There exists a function $\eta\colon [0,1]\to [0,\infty)$ with
$\eta(\varepsilon)\to 0$ as $\varepsilon\to 0$ such that the following
holds. Let $(N,\tau_N)$ be a finite factor, let $(M,\tau_M)$ be a tracial von
Neumann algebra, and let
$
\varphi\colon N_{\le 1}\to M_{\le 1}
$
be an $\varepsilon$-unital $L^2$-$\varepsilon$-$*$-homomorphism. Then for all
$x,y\in N_{\le 1}$ one has
\begin{align*}
\bigl|\|\varphi(x)\|_{2,\tau_M}-\|x\|_{2,\tau_N}\bigr|&\le \eta(\varepsilon),\\
\|\varphi(x)-\varphi(y)\|_{2,\tau_M}&\le \|x-y\|_{2,\tau_N}+\eta(\varepsilon).
\end{align*}
\end{corollary}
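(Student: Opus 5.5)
The plan is to derive both estimates from Proposition \ref{P.uniform-trace-control-finite-factors}, applied to the positive contraction $x^*x$, together with the defining inequalities of an $L^2$-$\varepsilon$-$*$-homomorphism. Note the roles of the letters: here $N$ is the finite factor and $M$ the target. Write $\eta_0$ for the function provided by Proposition \ref{P.uniform-trace-control-finite-factors}.

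For the first estimate, fix $x\in N_{\le 1}$. Then $x^*\in N_{\le1}$ and $x^*x\in N_{\le1}$. We compare $\|\varphi(x)\|_2^2=\tau_M(\varphi(x)^*\varphi(x))$ with $\tau_M(\varphi(x^*x))$. Since $\varphi(x)$ and $\varphi(x^*)$ are contractions, Cauchy--Schwarz gives
\[
\bigl|\tau_M(\varphi(x)^*\varphi(x))-\tau_M(\varphi(x^*)\varphi(x))\bigr|\le \|\varphi(x)^*-\varphi(x^*)\|_2\|\varphi(x)\|_2\le\varepsilon,
\]
and similarly
\[
\bigl|\tau_M(\varphi(x^*)\varphi(x))-\tau_M(\varphi(x^*x))\bigr|\le\varepsilon .
\]
Proposition \ref{P.uniform-trace-control-finite-factors} gives $|\tau_M(\varphi(x^*x))-\tau_N(x^*x)|\le\eta_0(\varepsilon)$. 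Combining these, $\bigl|\|\varphi(x)\|_2^2-\|x\|_2^2\bigr|\le 2\varepsilon+\eta_0(\varepsilon)$. Since both norms lie in $[0,1]$, the inequality $|a-b|\le|a^2-b^2|^{1/2}$ for $a,b\ge0$ yields the first claim with $\eta_1(\varepsilon):=(2\varepsilon+\eta_0(\varepsilon))^{1/2}$.

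For the Lipschitz estimate, the difficulty is that $x-y$ need not lie in $N_{\le1}$, so we rescale by $1/2$. Put $z:=\tfrac12(x-y)$, which is a contraction. We first show $\varphi(x)-\varphi(y)$ is $L^2$-close to $2\varphi(z)$. Approximate homogeneity gives $\varphi(-y/2)\approx-\varphi(y/2)$ and $\varphi(x/2)\approx\tfrac12\varphi(x)$. To get $\varphi(y/2)\approx\tfrac12\varphi(y)$, write $y=\tfrac y2+\tfrac y2$ and use additivity. Additivity applied to $\tfrac x2+(-\tfrac y2)=z$, whose partial sums are contractions, then gives
\[
\|\varphi(x)-\varphi(y)-2\varphi(z)\|_2\le C\varepsilon
\]
for a small absolute constant $C$ (about $8$). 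Applying the first estimate to $z$ gives $\|\varphi(z)\|_2\le\|z\|_2+\eta_1(\varepsilon)$. Hence
\[
\|\varphi(x)-\varphi(y)\|_2\le\|x-y\|_2+2\eta_1(\varepsilon)+C\varepsilon .
\]
Setting $\eta:=2\eta_1+C\varepsilon$ covers both inequalities.

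The only step needing care is this rescaling bookkeeping: each application of additivity or homogeneity must only involve elements of $N_{\le1}$. All the real analytic content sits in Proposition \ref{P.uniform-trace-control-finite-factors}, so no serious obstacle remains.
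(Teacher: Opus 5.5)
Your proposal is correct and follows the paper's proof essentially step for step: the first estimate comes from $2\varepsilon$-closeness of $\varphi(x)^*\varphi(x)$ and $\varphi(x^*x)$ together with Proposition~\ref{P.uniform-trace-control-finite-factors} applied to $x^*x$, and the second from rescaling by $1/2$ and applying the first estimate to $(x-y)/2$. The only difference is cosmetic: the paper writes $x=s+h$, $y=s-h$ with $s=(x+y)/2$, $h=(x-y)/2$, which gives the constant $3\varepsilon$ in place of your roughly $7\varepsilon$ from the decomposition $\tfrac x2+(-\tfrac y2)$.
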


\begin{proof}
Let $\eta_0:[0,1]\to [0,\infty)$ be the modulus from
Proposition~\ref{P.uniform-trace-control-finite-factors}. Fix $x\in N_{\le 1}$. Since
$x^*x\in N_{\le 1}$, approximate multiplicativity and approximate
$*$-preservation give
\[
\begin{aligned}
\|\varphi(x)^*\varphi(x)-\varphi(x^*x)\|_{2,\tau_M}
&\le \|\varphi(x)^*\varphi(x)-\varphi(x^*)\varphi(x)\|_{2,\tau_M} \\
&\qquad +\|\varphi(x^*)\varphi(x)-\varphi(x^*x)\|_{2,\tau_M} \\
&\le 2\varepsilon.
\end{aligned}
\]
Therefore
\[
\begin{aligned}
\bigl|\|\varphi(x)\|_{2,\tau_M}^2-\|x\|_{2,\tau_N}^2\bigr|
&=\bigl|\tau_M(\varphi(x)^*\varphi(x)) - \tau_N(x^*x)\bigr| \\
&\le \|\varphi(x)^*\varphi(x)-\varphi(x^*x)\|_{2,\tau_M} \\
&\qquad + \bigl|\tau_M(\varphi(x^*x)) - \tau_N(x^*x)\bigr| \\
&\le 2\varepsilon+\eta_0(\varepsilon).
\end{aligned}
\]
Since $\|\varphi(x)\|_{2,\tau_M}\le 1$ and $\|x\|_{2,\tau_N}\le 1$, it follows that
\[
\bigl|\|\varphi(x)\|_{2,\tau_M}-\|x\|_{2,\tau_N}\bigr|
\le \bigl(2\varepsilon+\eta_0(\varepsilon)\bigr)^{1/2}.
\]

Now let $x,y\in N_{\le 1}$ and put
\[
s:=\frac{x+y}{2},
\qquad
h:=\frac{x-y}{2}.
\]
Then $s,h\in N_{\le 1}$, and the relations $x=s+h$ and $y=s-h$ hold inside
$N_{\le 1}$. Hence approximate additivity yields
\[
\|\varphi(x)-\varphi(s)-\varphi(h)\|_{2,\tau_M}\le \varepsilon
\]
and
\[
\|\varphi(y)-\varphi(s)-\varphi(-h)\|_{2,\tau_M}\le \varepsilon.
\]
Since approximate homogeneity gives
$
\|\varphi(-h)+\varphi(h)\|_{2,\tau_M}\le \varepsilon,
$
we obtain
\[
\|\varphi(x)-\varphi(y)-2\varphi(h)\|_{2,\tau_M}\le 3\varepsilon.
\]
Applying the first part to $h$ gives
\[
\begin{aligned}
\|\varphi(x)-\varphi(y)\|_{2,\tau_M}
&\le 2\|\varphi(h)\|_{2,\tau_M}+3\varepsilon \\
&\le 2\|h\|_{2,\tau_N}+2\bigl(2\varepsilon+\eta_0(\varepsilon)\bigr)^{1/2}+3\varepsilon \\
&= \|x-y\|_{2,\tau_N}+2\bigl(2\varepsilon+\eta_0(\varepsilon)\bigr)^{1/2}+3\varepsilon.
\end{aligned}
\]
Thus the assertions hold with
$
\eta(\varepsilon):=2\bigl(2\varepsilon+\eta_0(\varepsilon)\bigr)^{1/2}+3\varepsilon.
$
\end{proof}

\subsection{Ulam stability for finite-dimensional unitary groups} \label{sec:dCOT}

In this section we record a stability theorem for almost homomorphisms from ${\rm U}(n)$ to the unitary group of a tracial von Neumann algebra, with respect to the normalized trace norm. It is a standard variation of the de Chiffre--Ozawa--Thom argument~\cite{dCOT}: one uses a finite-valued measurable approximation under the $\varepsilon$-isometric hypothesis, and then applies Pettis' theorem~\cite{Pettis1950} to replace the resulting measurable positive definite kernel by a continuous one. The result may also be viewed as the tracial-von-Neumann analogue of the finite-group Hilbert--Schmidt stability theorem of Gowers and Hatami~\cite{Gowers_Hatami}.

\begin{definition}
Let $(M,\tau)$ be a tracial von Neumann algebra and let
$
\varphi\colon {\rm U}(n)\to \mathcal U(M)
$
be a map.
\begin{enumerate}
\item[(i)] We say that $\varphi$ is an $\varepsilon$-representation if
\[
\|\varphi(uv)-\varphi(u)\varphi(v)\|_{2,\tau}\le \varepsilon
\qquad (u,v\in {\rm U}(n)).
\]
\item[(ii)] We say that $\varphi$ is $\varepsilon$-isometric if
\[
\|\varphi(u)-\varphi(v)\|_{2,\tau}\le \|u-v\|_{2,n}+\varepsilon
\qquad (u,v\in {\rm U}(n)).
\]
\end{enumerate}
\end{definition}

Our main result will be proved after a sequence of lemmas. The first one is a measurable approximation result for $\varepsilon$-isometric $\varepsilon$-representations.

\begin{lemma}\label{app:lem-measurable-approx}
Let $(M,\tau)$ be a tracial von Neumann algebra and let
$
\varphi\colon {\rm U}(n)\to \mathcal U(M)
$
be an $\varepsilon$-isometric $\varepsilon$-representation. Then there exists a Borel map $\psi\colon {\rm U}(n)\to \mathcal U(M)$ with finite image such that
\[
\sup_{u\in {\rm U}(n)}\|\varphi(u)-\psi(u)\|_{2,\tau}\le 2\varepsilon
\]
and
\[
\|\psi(uv)-\psi(u)\psi(v)\|_{2,\tau}\le 7\varepsilon
\qquad (u,v\in {\rm U}(n)).
\]
\end{lemma}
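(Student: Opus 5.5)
The plan is to discretize ${\rm U}(n)$ at scale $\varepsilon$ in the normalized Hilbert--Schmidt metric $\|\cdot\|_{2,n}$ and take $\psi$ to be $\varphi$ composed with a Borel retraction onto a finite net. Since ${\rm U}(n)$ is compact and $\|\cdot\|_{2,n}$ induces its usual topology, there is a finite set $F=\{g_1,\dots,g_N\}\subset {\rm U}(n)$ with $\min_i\|u-g_i\|_{2,n}<\varepsilon$ for every $u$. We partition ${\rm U}(n)$ into the Borel sets
\[
B_i:=\{u:\|u-g_i\|_{2,n}<\varepsilon\}\setminus\bigcup_{j<i}B_j ,
\]
and define $\psi(u):=\varphi(g_i)$ for $u\in B_i$. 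Then $\psi$ is Borel, takes values in $\mathcal U(M)$, and has finite image $\{\varphi(g_1),\dots,\varphi(g_N)\}$.

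The uniform closeness follows directly from the $\varepsilon$-isometric hypothesis. For $u\in B_i$,
\[
\|\varphi(u)-\psi(u)\|_{2,\tau}=\|\varphi(u)-\varphi(g_i)\|_{2,\tau}\le \|u-g_i\|_{2,n}+\varepsilon<2\varepsilon .
\]

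For approximate multiplicativity we use the triangle inequality, together with the fact that left or right multiplication by a unitary is an isometry for $\|\cdot\|_{2,\tau}$. For $u,v\in{\rm U}(n)$ this gives
\[
\|\psi(uv)-\psi(u)\psi(v)\|_{2,\tau}\le \|\psi(uv)-\varphi(uv)\|+\|\varphi(uv)-\varphi(u)\varphi(v)\|+\|\varphi(u)\varphi(v)-\psi(u)\psi(v)\|.
\]
The first term is at most $2\varepsilon$ and the second at most $\varepsilon$. For the third we write
\[
\varphi(u)\varphi(v)-\psi(u)\psi(v)=(\varphi(u)-\psi(u))\varphi(v)+\psi(u)(\varphi(v)-\psi(v)).
\]
Since $\varphi(v)$ and $\psi(u)$ are unitaries, this term is at most $4\varepsilon$. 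Altogether the bound is $2\varepsilon+\varepsilon+4\varepsilon=7\varepsilon$, as claimed.

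There is no serious obstacle here. The only points needing care are that the net is finite, which uses compactness of ${\rm U}(n)$ in the $\|\cdot\|_{2,n}$-topology, and that the pieces $B_i$ are Borel, so that $\psi$ is measurable. Note that $\varphi$ itself need not be measurable, which is precisely why the discretization is needed before the averaging arguments of the subsequent lemmas.
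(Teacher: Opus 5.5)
Your proposal is correct and is essentially the paper's proof. Both take a finite $\varepsilon$-net, a Borel partition subordinate to the $\varepsilon$-balls, and set $\psi:=\varphi(g_i)$ on each piece; the $\varepsilon$-isometric hypothesis gives the $2\varepsilon$ bound, and the three-term triangle inequality with unitary invariance of $\|\cdot\|_{2,\tau}$ gives $2\varepsilon+\varepsilon+4\varepsilon=7\varepsilon$. As in the paper, you tacitly need $\varepsilon>0$ for the net to be finite; for $\varepsilon=0$ the statement fails, e.g.\ for the standard representation.
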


\begin{proof}
Equip ${\rm U}(n)$ with the metric $d(u,v):=\|u-v\|_{2,n}$. Choose a finite $\varepsilon$-net $\{u_1,\dots,u_N\}\subseteq {\rm U}(n)$. Since ${\rm U}(n)$ is compact metric, there is a Borel partition
$
{\rm U}(n)=A_1\sqcup\cdots\sqcup A_N
$
with $A_i\subseteq B(u_i,\varepsilon)$. Define
\[
\psi(u):=\varphi(u_i)
\qquad(u\in A_i).
\]
Then $\psi$ is Borel and has finite image. For $u\in A_i$,
\[
\|\varphi(u)-\psi(u)\|_{2,\tau}=\|\varphi(u)-\varphi(u_i)\|_{2,\tau}\le \|u-u_i\|_{2,n}+\varepsilon\le 2\varepsilon.
\]
For the defect, use the unitary invariance of $\|\cdot\|_{2,\tau}$:
\begin{align*}
\|\psi(uv)-\psi(u)\psi(v)\|_{2,\tau}
&\le \|\psi(uv)-\varphi(uv)\|_{2,\tau}
   +\|\varphi(uv)-\varphi(u)\varphi(v)\|_{2,\tau} \\
&\qquad +\|\varphi(u)\varphi(v)-\psi(u)\psi(v)\|_{2,\tau}\\
&\le 2\varepsilon+\varepsilon+\|\varphi(u)-\psi(u)\|_{2,\tau}+\|\varphi(v)-\psi(v)\|_{2,\tau}\\
&\le 7\varepsilon.
\end{align*}
\end{proof}

\begin{definition}
Let $(M,\tau)$ be a tracial von Neumann algebra.
A function $\varphi\colon {\rm U}(n)\to M$ is positive definite if for every finite set $F\subseteq {\rm U}(n)$ the block matrix
\[
[\varphi(uv^{-1})]_{u,v\in F}\in M_{|F|}(M)
\]
is positive.
\end{definition}

When working on ${\rm U}(n)$, measurability refers to the Haar probability measure and the ultraweak Borel structure on $M$. The following is a version of Pettis' theorem~\cite{Pettis1950} for operator-valued positive definite functions.

\begin{proposition}\label{app:prop-pd-continuous}
Let $(M,\tau)$ be a tracial von Neumann algebra and let $\varphi\colon {\rm U}(n)\to M$ be measurable and positive definite. Then there exists a unique continuous positive definite function $\varphi_c\colon {\rm U}(n)\to M$ such that $\varphi=\varphi_c$ almost everywhere.
\end{proposition}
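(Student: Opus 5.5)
Uniqueness is immediate. Two continuous functions that agree almost everywhere with respect to Haar measure agree everywhere, because Haar measure has full support. Concretely, if $\varphi_c\neq\varphi_c'$ at some $g$, then $\tau(y^*(\varphi_c-\varphi_c')(\cdot))\neq0$ for some $y\in M$. By continuity this persists on an open neighbourhood of $g$, and that neighbourhood has positive measure, which is a contradiction. The whole task is therefore existence, and the plan is to reduce it to the classical scalar statement via a GNS/Stinespring-type construction.

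\emph{Step 1: reduction to a kernel on a Hilbert space.} Positive definiteness gives, for each finite $F$ and $\xi_u\in L^2(M)$, the bound $\sum_{u,v}\langle\varphi(uv^{-1})\xi_v,\xi_u\rangle\ge0$. Taking $F=\{e,g\}$, the $2\times2$ positivity gives $\|\varphi(g)\|\le\|\varphi(e)\|$, so $\varphi$ is bounded. For each fixed pair $\xi,\eta\in L^2(M)$, the scalar function $g\mapsto\langle\varphi(g)\xi,\eta\rangle$ is measurable, and for $\xi=\eta$ it is positive definite. The vector-valued version of the Riesz--Sz.-Nagy/Pettis argument then applies. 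Form the Hilbert space $H$ by completing finitely supported functions $f\colon{\rm U}(n)\to L^2(M)$ under the form $\langle f,f'\rangle=\sum\langle\varphi(uv^{-1})f(v),f'(u)\rangle$. Let $\pi$ be the left translation representation on $H$ and $V\colon L^2(M)\to H$ the inclusion at $e$. This yields $\varphi(g)=V^*\pi(g)V$ for every $g$. Measurability of $\varphi$ makes the matrix coefficients $g\mapsto\langle\pi(g)V\xi,V\eta\rangle$ measurable. These coefficients span a dense set of coefficients of $\pi$, since $\pi$ is cyclic on $VL^2(M)$.

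\emph{Step 2: Pettis/von Neumann.} The key classical input is that a unitary representation of a locally compact group with measurable coefficients on a separable cyclic part is continuous on the subspace $H_c$ of vectors $\zeta$ with $g\mapsto\pi(g)\zeta$ continuous. Moreover, $\pi$ agrees with its continuous part off the complement, almost everywhere in the weak sense. I will proceed as follows. Define $P$ as the projection onto $H_c$, which contains the vectors $\pi(f)\zeta=\int f(g)\pi(g)\zeta\,dg$ for $f\in L^1$; these integrals exist weakly by measurability. Set $\varphi_c(g):=V^*P\pi(g)PV$. This is continuous, positive definite as a compression of a continuous representation, and $M$-valued since it is a weak limit of $\varphi$-averages $\int f_k\ast\varphi\ast f_k^*$ with $f_k$ an approximate identity. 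The main obstacle is showing $\varphi_c=\varphi$ almost everywhere. This is the genuine Pettis step, namely that $(1-P)$ contributes coefficients vanishing almost everywhere. I would prove it by the standard argument: for $\zeta\perp H_c$ we have $\pi(f)\zeta=0$ for all $f$, hence $\int f(g)\langle\pi(g)\zeta,\zeta'\rangle dg=0$ for all $f$, so every coefficient involving $(1-P)$ vanishes almost everywhere.

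\emph{Step 3: handling non-separability.} $L^2(M)$ may be non-separable, and the null sets must not depend on $\xi,\eta$. To deal with this, I would note that $\varphi$ takes values in a bounded set, and use the weak-$*$ convergence $f_k\ast\varphi\to\varphi$ in $L^1({\rm U}(n))$-measure, in the sense of Lebesgue differentiation, along a countable approximate identity. This reduces to countably many coefficients against a set that norms enough of $M$ on the separable subalgebra generated by the (essentially separable-valued) measurable map $\varphi$. Alternatively, if the measurability convention already gives essential separable range, it suffices to restrict to a separable $L^2$-subspace.
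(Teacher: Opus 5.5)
Your route differs genuinely from the paper's. The paper fixes $M\subseteq B(\mathcal H)$ and applies Dixmier's scalar theorem to each diagonal coefficient $u\mapsto\langle\varphi(u)\xi,\xi\rangle$. It then polarizes and assembles the continuous representatives into $\varphi_c$, and obtains $\varphi_c(u)\in M$ from density of a full-measure set. You instead prove the operator-valued statement directly:
\begin{enumerate}
\item a Kolmogorov dilation $\varphi(g)=V^*\pi(g)V$ with a merely measurable unitary representation;
\item the invariant subspace $H_c$ of continuous vectors, which contains every $\pi(f)\zeta$;
\item the observation that $\pi(f)$ vanishes on $H_c^\perp$, so every coefficient passing through $H_c^\perp$ is zero a.e.;
\item the definition $\varphi_c:=V^*P\pi(\cdot)PV$.
\end{enumerate}
The first sentence of Step~2 is circular as stated, but the argument after it is complete. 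This is the Segal--von Neumann argument behind the scalar theorem, run at the operator level, and it buys two things. First, continuity and positive definiteness of $\varphi_c$ are automatic, with no need to check that pointwise representatives form bounded sesquilinear forms. Second, your proof that $\varphi_c(g)\in M$ does not use almost-everywhere equality of operators, whereas the paper's density argument does. You obtain $\varphi_c(g)$ as a weak limit of $V^*\pi(f_k)^*\pi(g)\pi(f_k)V=\iint\overline{f_k(h)}f_k(h')\varphi(h^{-1}gh')\,dh\,dh'$, which commutes with $M'$. One correction: with the kernel $\varphi(uv^{-1})$ the form is invariant under \emph{right} translation $(\pi(g)f)(u)=f(ug)$, not left translation. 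With that choice, $V^*\pi(g)V=\varphi(g)$ holds as you claim.

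The weak point is Step~3. Steps~1--2 give, for each pair $\xi,\eta\in L^2(M)$, a null set $Z_{\xi,\eta}$ off which $\langle\varphi(g)\xi,\eta\rangle=\langle\varphi_c(g)\xi,\eta\rangle$. The statement needs a single null set. Lebesgue differentiation of $f_k*\varphi$ does not remove this, because it too is applied coefficientwise. The essential separability of the range is asserted rather than derived, and it cannot be derived from measurability of the scalar coefficients alone, which is all Steps~1--2 use. For example, take $M=L({\rm U}(n)_{\mathrm d})$, the group von Neumann algebra of ${\rm U}(n)$ as a discrete group, and $\varphi(g)=\lambda_g$. This map is positive definite, and every normal functional satisfies $\omega(\lambda_g)\neq0$ for at most countably many $g$. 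So all coefficients are measurable and vanish a.e., which forces any continuous a.e.\ representative to be $0$, although $\lambda_g\neq0$ for every $g$. Hence Step~3 must genuinely use separability.

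If $L^2(M)$ is separable, or $\varphi$ takes values in a von Neumann subalgebra with separable predual to which you restrict the construction, the argument closes at once. Take a countable dense set $\{\xi_i\}\subset L^2(M)$, discard $\bigcup_{i,j}Z_{\xi_i,\xi_j}$, and use $\|\varphi(g)\|,\|\varphi_c(g)\|\le\|\varphi(e)\|$. The paper's proof passes over the same point when it assembles the coefficientwise representatives into a map with $\varphi=\varphi_c$ almost everywhere. This is harmless for the use in Theorem~\ref{app:thm-dCOT}. There the kernel is an average of products of the finitely many values of $\psi$, so it lies in a finitely generated von Neumann subalgebra, which has separable predual.
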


\begin{proof}
Fix a faithful normal representation $M\subseteq B(\mathcal H)$. For $\xi,\eta\in\mathcal H$ the scalar coefficient
$
u\mapsto \langle \varphi(u)\xi,\eta\rangle
$
is measurable. For $\xi=\eta$ it is also positive definite. By the classical theorem of Dixmier for scalar-valued measurable positive definite functions on locally compact groups, each diagonal coefficient admits a unique continuous positive definite representative; see \cite{Dixmier}*{\S13.4}. Polarization then yields continuous representatives for all coefficients. These assemble to an ultraweakly continuous map
$
\varphi_c\colon {\rm U}(n)\to B(\mathcal H)
$
with $\varphi=\varphi_c$ almost everywhere. Since a full-measure subset of ${\rm U}(n)$ is dense and $M$ is ultraweakly closed in $B(\mathcal H)$, it follows that $\varphi_c(u)\in M$ for every $u\in {\rm U}(n)$. Positivity of the finite block matrices passes to $\varphi_c$ by continuity, and uniqueness is immediate because two continuous functions on ${\rm U}(n)$ that agree almost everywhere agree everywhere.
\end{proof}

The main result of this section is the following theorem:

\begin{theorem}\label{app:thm-dCOT}
For every $\delta>0$, there exists $\varepsilon>0$ such that the following holds.
Let $(M,\tau)$ be a tracial von Neumann algebra and let
$
\varphi\colon {\rm U}(n)\to \mathcal U(M)
$
be an $\varepsilon$-isometric $\varepsilon$-representation. Then there exist
\begin{itemize}[leftmargin=2em]
\item a semifinite tracial von Neumann algebra $(P,\Tr)$ and a projection $p\in P$ with $\Tr(p)=1$ and $pPp\cong M$;
\item a projection $q\in P$ with $\Tr(q)\in[1-\delta,1+\delta]$;
\item and a continuous group homomorphism
\[
\rho\colon {\rm U}(n)\to \mathcal U(qPq),
\]
where continuity is with respect to $\|\cdot\|_{2,\Tr}$ on $\mathcal U(qPq)$.
\end{itemize}
such that, after identifying $M$ with $pPp$, one has
\[
\sup_{u\in {\rm U}(n)}\|\varphi(u)-\rho(u)\|_{2,\Tr}<\delta.
\]
Here the difference is taken in the common ambient algebra $P$, so $qPq$ is an amplification corner of $M$.
\end{theorem}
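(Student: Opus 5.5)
Our plan is to run the de Chiffre--Ozawa--Thom averaging argument on the compact group ${\rm U}(n)$, using Lemma~\ref{app:lem-measurable-approx} and Proposition~\ref{app:prop-pd-continuous} to handle the lack of continuity.

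\emph{Step 1: a measurable approximation.} Replace $\varphi$ by the Borel finite-valued map $\psi$ of Lemma~\ref{app:lem-measurable-approx}. It is $2\varepsilon$-close to $\varphi$ and is a $7\varepsilon$-representation. Note that no constant depends on $n$: the net is used only to obtain measurability, and the Haar integrals below are defined because $\psi$ is a finite sum of indicator functions times unitaries.

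\emph{Step 2: the Stinespring-type dilation.} On the algebraic tensor product $L^\infty({\rm U}(n))\odot L^2(M)$, or more invariantly on the right Hilbert $M$-module of measurable functions $\xi\colon {\rm U}(n)\to L^2(M)$, consider the operator-valued kernel
\[
k(g):=\int_{{\rm U}(n)}\psi(x)\psi(g^{-1}x)^*\,dx \in M .
\]
This kernel is measurable and positive definite, by the usual computation $\sum_{g,h} a_g^*k(gh^{-1})a_h=\int\bigl|\sum_g \psi(g^{-1}x)^*a_g\bigr|^2dx\ge0$ after a change of variables. Proposition~\ref{app:prop-pd-continuous} yields a continuous positive definite representative $k_c$. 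The GNS construction for $k_c$, viewed as a normal completely positive datum over $M$, produces a Hilbert $M$-bimodule with a genuine continuous unitary representation $\rho$ of ${\rm U}(n)$ and a vector $\xi_0$ with $\langle \rho(g)\xi_0,\xi_0\rangle=k_c(g)$. Concretely, we realize everything inside $P:=B(L^2({\rm U}(n)))\bar\otimes M$ with trace $\Tr\otimes\tau$, and $p$ a rank-one projection tensor $1$ onto constants, so that $pPp\cong M$. Then $\rho=\lambda\otimes 1$ conjugated by the isometry $V\colon \xi\mapsto (x\mapsto \psi(x)^*\xi)$ from $L^2(M)$. The relevant estimate is the standard one: the almost-multiplicativity gives $\|k(g)-\psi(g)\|_{2}\le C\varepsilon$ on average, hence $V^*\rho(g)V\approx\varphi(g)$ in $\|\cdot\|_2$. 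The isometry $V$ intertwines only approximately, $\|\rho(g)V-V\varphi(g)\|_2\le C\varepsilon$.

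\emph{Step 3: the main obstacle, finding the projection $q$.} The range projection $VV^*$ has trace $1$ but is only almost $\rho$-invariant. Averaging it over $\rho({\rm U}(n))$ gives a positive contraction $T=\int\rho(g)VV^*\rho(g)^*dg$ in the commutant of $\rho$. It satisfies $\|T-VV^*\|_{2,\Tr}\le C\varepsilon$, and its trace equals $1$. The step we expect to be hardest is this one. Take a spectral projection $q=\chi_{[1/2,1]}(T)$. It commutes with $\rho$, so $\rho$ restricts to a genuine representation on $qPq$. The standard estimate $\|q-VV^*\|_2\le 2\|T-VV^*\|_2$ gives $|\Tr(q)-1|\le C'\sqrt\varepsilon$, and $q$ is close to $p$ after identifying $VV^*$ with $p$ via the partial isometry $V$. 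Compressing, $\|\varphi(g)-q\rho(g)q\|_{2,\Tr}\le C''\sqrt\varepsilon$ uniformly in $g$. Continuity of $\rho$ in $\|\cdot\|_{2,\Tr}$ comes from the continuity of $k_c$. The crucial point is that every constant is absolute, independent of $n$ and $M$, because only the Haar averaging and unitary invariance are used. Choosing $\varepsilon$ with $C''\sqrt\varepsilon<\delta$ finishes the proof.
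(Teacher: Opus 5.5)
Your proposal is correct and is essentially the paper's argument: a finite-valued Borel approximation, Haar averaging to an $M$-valued positive definite kernel, the de Chiffre--Ozawa--Thom dilation, and an averaged, spectrally truncated range projection giving $q$, with constants independent of $n$ as later required in Corollary~\ref{C.matrix-unitary-correction}; you simply make the dCOT step explicit via $\lambda\otimes 1$ on $L^2({\rm U}(n))\otimes L^2(M)$. A few small points: in that concrete model $k(g)=V^*(\lambda_g\otimes 1)V$ is already continuous, so Proposition~\ref{app:prop-pd-continuous} is not needed, and the $\|\cdot\|_{2,\Tr}$-continuity of $g\mapsto(\lambda_g\otimes 1)q$ comes from strong continuity of $\lambda$ together with $\Tr(q)<\infty$ rather than from $k_c$; your bounds are in fact $O(\varepsilon)$ rather than $O(\sqrt{\varepsilon})$; and to match the statement you should take $p:=VV^*$, identifying $M$ with $pPp$ via $a\mapsto VaV^*$.
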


\begin{proof}
Fix $\delta>0$. The proof is the same as in
\cite{dCOT}*{Theorem~5.2 and the proof of Theorem~1.6} once one inserts
the two preparatory steps that are specific to the compact group ${\rm U}(n)$.
Apply Lemma~\ref{app:lem-measurable-approx}.
This gives a Borel map
$
\psi\colon {\rm U}(n)\to \mathcal U(M)
$
with finite image such that
\[
\sup_{u\in {\rm U}(n)}\|\varphi(u)-\psi(u)\|_{2,\tau}\le 2\varepsilon
\]
and
\[
\|\psi(uv)-\psi(u)\psi(v)\|_{2,\tau}\le 7\varepsilon
\qquad(u,v\in {\rm U}(n)).
\]
Thus, after choosing $\varepsilon$ sufficiently small in terms of $\delta$, we may replace $\varphi$ by a measurable approximate
representation $\psi$ with the same type of small-defect estimates as in
\cite{dCOT}. Since the replacement is uniform, any final estimate for
$\psi$ transfers back to $\varphi$ by the triangle inequality.

Second, as in \cite{dCOT} one averages $\psi$ over ${\rm U}(n)$ to obtain an
$M$-valued positive definite kernel. In the discrete amenable setting this
object is defined on the nose everywhere, whereas here Haar averaging gives
only a measurable positive definite function. At this point
Proposition~\ref{app:prop-pd-continuous} enters: the averaged kernel has a
unique continuous positive definite representative. Since the two functions
agree almost everywhere, all inequalities established in \cite{dCOT} for the
averaged kernel remain valid for this continuous representative.

After this replacement, the operator-algebraic part of the proof of
\cite{dCOT}*{Theorem~5.2} applies word for word. One performs the Hilbert
$M$-module GNS/Stinespring construction for the continuous positive definite
kernel and obtains a semifinite tracial von Neumann algebra $(P,\Tr)$, a
projection $p\in P$ with $\Tr(p)=1$ and $pPp\cong M$, a projection $q\in P$
with $\Tr(q)\in[1-\delta,1+\delta]$, and a continuous homomorphism
$
\rho\colon {\rm U}(n)\to \mathcal U(qPq)
$
such that $\psi(u)$ is uniformly close to $\rho(u)$ in $\|\cdot\|_{2,\Tr}$,
after identifying $M$ with $pPp$.

Finally, the passage from $\psi$ back to $\varphi$ is exactly the same as in
the proof of \cite{dCOT}*{Theorem~1.6}. Choosing the initial parameters
sufficiently small yields
\[
\sup_{u\in {\rm U}(n)}\|\varphi(u)-\rho(u)\|_{2,\Tr}<\delta.
\]
This proves the theorem.
\end{proof}

\section{Ulam stability for matrix algebras}
\label{sec:matrix-stability}

The proof of the Ulam stability result for matrix algebras proceeds in four steps.

\emph{Step 0:} We first show that an $\varepsilon$-unital $L^2$-$\varepsilon$-$*$-homomorphism automatically preserves the trace approximately; see Proposition~\ref{P.uniform-trace-control-finite-factors}.

\emph{Step 1:} We show that an $L^2$-$\varepsilon$-$*$-homomorphism is close on the unitary group to a map into the unitary group which is approximately multiplicative; see Lemma~\ref{lem:matrix-unitary-restriction}.

\emph{Step 2:} Then, we apply Theorem \ref{app:thm-dCOT} to replace this approximate representation of ${\rm U}(n)$ by a genuine continuous representation after a small amplification.

\emph{Step 3:}  We then use a direct representation-theoretic argument to show that an almost isometric representation of ${\rm U}(n)$ is close to a multiple of the standard representation; this is Theorem \ref{T.standard-representation-extraction}.  This part turned out to be more subtle than we expected and relies on eigenvalue estimates for the Casimir operator, see Lemma \ref{app:lem-casimir-gap}.

\emph{Step 4:} Finally, we pass from unitaries to the whole unit ball by a standard extension argument, writing each contraction as a linear combination of four unitaries; see the proof of Theorem \ref{thm:matrix-stability-reduction}.

This strategy is inspired by Farah's work on rigidity phenomena for corona algebras, especially~\cite[Chapter 17]{Farah-book}, but the details are different because of the change of metric and the need to control the matrix size in a uniform way. In particular, the third step needs some new input.

\medskip

We now implement this program.

\begin{lemma}\label{lem:matrix-unitary-restriction}
There exists a function $\gamma : [0,1]\to [0,\infty)$ with
$\gamma(\varepsilon)\to 0$ as $\varepsilon\to 0$ such that the following
holds. Let $(N,\tau_N)$ and $(M,\tau_M)$ be tracial von Neumann algebras and
let
$
   \varphi:N_{\le 1}\to M_{\le 1}
$
be an $\varepsilon$-unital $L^2$-$\varepsilon$-$*$-homomorphism. Then, for
every $u\in \mathcal U(N)$, there exists a unitary
$
    \varphi_0(u)\in \mathcal U(M)
$
such that
$
   \|\varphi(u)-\varphi_0(u)\|_{2,\tau_M}\leq \gamma(\varepsilon).
$
Moreover, the map $\varphi_0:\mathcal U(N)\to \mathcal U(M)$ can be chosen so that
\[
    \sup_{u,v\in \mathcal U(N)}
   \|\varphi_0(uv)-\varphi_0(u)\varphi_0(v)\|_{2,\tau_M}
   \leq \gamma(\varepsilon).
\]
\end{lemma}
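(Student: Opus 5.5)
Fix $u\in\mathcal U(N)$ and put $a:=\varphi(u)\in M_{\le 1}$. The plan is to show that $a$ is almost unitary in $\|\cdot\|_{2}$, take $\varphi_0(u)$ to be the unitary part of its polar decomposition, and then transfer approximate multiplicativity from $\varphi$ to $\varphi_0$ by the triangle inequality. Note that $N$ is an arbitrary tracial algebra here, so Proposition~\ref{P.uniform-trace-control-finite-factors} is not available. This is not a problem: only the defining inequalities are used, together with the unitality estimate $\|\varphi(1)-1\|_2\le\varepsilon$.

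\emph{Step 1: $a$ is almost unitary.} Since $u^*u=uu^*=1$ and $u,u^*\in N_{\le1}$, approximate multiplicativity, approximate $*$-preservation and $\varepsilon$-unitality give
\[
\|a^*a-1\|_2\le \|\varphi(u)^*\varphi(u)-\varphi(u^*)\varphi(u)\|_2+\|\varphi(u^*)\varphi(u)-\varphi(1)\|_2+\|\varphi(1)-1\|_2\le 3\varepsilon,
\]
where we used $\|\varphi(u)\|\le1$ to bound the first term by $\|\varphi(u)^*-\varphi(u^*)\|_2$. The same argument gives $\|aa^*-1\|_2\le 3\varepsilon$.

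\emph{Step 2: choice of $\varphi_0(u)$.} Because $M$ is finite, $a$ has a polar decomposition $a=w|a|$ with $w\in\mathcal U(M)$; in a finite von Neumann algebra the partial isometry can always be extended to a unitary. Set $\varphi_0(u):=w$ (use the axiom of choice to make this a map). Then
\[
\|a-w\|_2=\|w(|a|-1)\|_2=\||a|-1\|_2.
\]
Since $0\le|a|\le1$, we have $(1-|a|)^2\le (1-|a|)(1+|a|)=1-a^*a$ pointwise by functional calculus. Hence
\[
\||a|-1\|_2^2\le\tau(1-a^*a)\le\|1-a^*a\|_2\le 3\varepsilon,
\]
so $\|\varphi(u)-\varphi_0(u)\|_2\le\sqrt{3\varepsilon}$.

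\emph{Step 3: multiplicativity.} For $u,v\in\mathcal U(N)$ we have $uv\in\mathcal U(N)$. Using that all elements involved are contractions and that $\|xy\|_2\le\|x\|\,\|y\|_2$ and $\|xy\|_2\le\|x\|_2\|y\|$, we get
\[
\|\varphi_0(uv)-\varphi_0(u)\varphi_0(v)\|_2\le \sqrt{3\varepsilon}+\varepsilon+\|\varphi(u)\varphi(v)-\varphi_0(u)\varphi_0(v)\|_2\le 3\sqrt{3\varepsilon}+\varepsilon.
\]
So $\gamma(\varepsilon):=3\sqrt{3\varepsilon}+\varepsilon$ works for both assertions.

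The only mildly delicate point is Step 2. Two things must be checked there: that a unitary (not merely a partial isometry) exists in the polar decomposition, which uses finiteness of $M$; and the functional-calculus inequality that converts $\|a^*a-1\|_2$ into a bound on $\||a|-1\|_2$. Everything else is the triangle inequality.
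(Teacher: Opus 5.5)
Your proof is correct and follows the paper's route: you bound $\|\varphi(u)^*\varphi(u)-1\|_2\le 3\varepsilon$, replace $\varphi(u)$ by the unitary from its polar decomposition (available since $M$ is finite), and transfer multiplicativity by the triangle inequality, arriving at the same form $\gamma(\varepsilon)=\varepsilon+3\alpha(3\varepsilon)$. The one difference is in the polar-decomposition step. The paper cites an unspecified modulus $\alpha$ from a general fact that uses both the $x^*x$ and $xx^*$ bounds, whereas you use that $\varphi(u)$ is a contraction to get the explicit $\alpha(\delta)=\sqrt{\delta}$ from the single inequality $(1-|a|)^2\le 1-a^*a$.
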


\begin{proof}
We use the following standard consequence of the polar decomposition in
tracial von Neumann algebras. If \((P,\tau_P)\) is tracial and \(x\in P\)
satisfies
\[
   \|x^*x-1\|_{2,\tau_P}\leq \delta,
   \qquad
   \|xx^*-1\|_{2,\tau_P}\leq \delta,
\]
then there are a unitary \(w\in P\) and a function
\(\alpha(\delta)\to 0\) such that
\[
   \|x-w\|_{2,\tau_P}\leq \alpha(\delta).
\]
This follows from the polar decomposition and the comparison theory for
projections in finite von Neumann algebras; see
\cite[Section V.I]{Takesaki-1}.

Let \(u\in \mathcal U(N)\). Since \(\varphi\) is approximately
\(*\)-preserving, approximately multiplicative, approximately unital, and
takes values in \(M_{\leq 1}\), we have
\[
\begin{aligned}
 \|\varphi(u)^*\varphi(u)-1\|_{2,\tau_M}
 &\leq
   \|\varphi(u)^*\varphi(u)-\varphi(u^*)\varphi(u)\|_{2,\tau_M}  \\
 &\quad+
   \|\varphi(u^*)\varphi(u)-\varphi(u^*u)\|_{2,\tau_M}
   +\|\varphi(1)-1\|_{2,\tau_M}                              \\
 &\leq 3\varepsilon .
\end{aligned}
\]
Similarly,
\[
   \|\varphi(u)\varphi(u)^*-1\|_{2,\tau_M}\leq 3\varepsilon .
\]
By the observation from the beginning of the proof, we get a map
\[
   \varphi_0:\mathcal U(N)\to \mathcal U(M)
\]
such that
\[
   \|\varphi(u)-\varphi_0(u)\|_{2,\tau_M}
   \leq \alpha(3\varepsilon).
\]

It remains to check approximate multiplicativity. Let
\(u,v\in \mathcal U(N)\). Then
\[
\begin{aligned}
 \|\varphi_0(uv)-\varphi_0(u)\varphi_0(v)\|_{2,\tau_M}
 &\leq
   \|\varphi_0(uv)-\varphi(uv)\|_{2,\tau_M} \\
 &\quad+
   \|\varphi(uv)-\varphi(u)\varphi(v)\|_{2,\tau_M} \\
 &\quad+
   \|\varphi(u)\varphi(v)-\varphi_0(u)\varphi_0(v)\|_{2,\tau_M}.
\end{aligned}
\]
The first term is at most \(\alpha(3\varepsilon)\), and the second term is at
most \(\varepsilon\). For the last term, write
\[
\begin{aligned}
\varphi(u)\varphi(v)-\varphi_0(u)\varphi_0(v)
&=
\bigl(\varphi(u)-\varphi_0(u)\bigr)\varphi(v) \\
&\quad+
\varphi_0(u)\bigl(\varphi(v)-\varphi_0(v)\bigr).
\end{aligned}
\]
Since \(\varphi(v)\in M_{\leq 1}\), we have
\(\|\varphi(v)\|_\infty\leq 1\). Also, since \(\varphi_0(u)\) is unitary,
left multiplication by \(\varphi_0(u)\) preserves the \(L^2\)-norm. Hence
\[
\begin{aligned}
&\|\varphi(u)\varphi(v)-\varphi_0(u)\varphi_0(v)\|_{2,\tau_M} \\
&\qquad\leq
   \|\bigl(\varphi(u)-\varphi_0(u)\bigr)\varphi(v)\|_{2,\tau_M}
   +
   \|\varphi_0(u)\bigl(\varphi(v)-\varphi_0(v)\bigr)\|_{2,\tau_M} \\
&\qquad\leq
   \|\varphi(u)-\varphi_0(u)\|_{2,\tau_M}
   +
   \|\varphi(v)-\varphi_0(v)\|_{2,\tau_M} \\
&\qquad\leq
   2\alpha(3\varepsilon).
\end{aligned}
\]
Therefore
\[
\begin{aligned}
 \|\varphi_0(uv)-\varphi_0(u)\varphi_0(v)\|_{2,\tau_M}
 &\leq
   \alpha(3\varepsilon)+\varepsilon+2\alpha(3\varepsilon) \\
 &= \varepsilon+3\alpha(3\varepsilon).
\end{aligned}
\]
Define
\[
   \gamma(\varepsilon)
   :=
   \varepsilon+3\alpha(3\varepsilon),
   \qquad 0\leq \varepsilon\leq 1.
\]
Then \(\gamma(\varepsilon)\to 0\) as \(\varepsilon\to 0\). Moreover, for every
\(u\in \mathcal U(N)\),
\[
   \|\varphi(u)-\varphi_0(u)\|_{2,\tau_M}
   \leq \alpha(3\varepsilon)
   \leq \gamma(\varepsilon),
\]
and
\[
   \sup_{u,v\in \mathcal U(N)}
   \|\varphi_0(uv)-\varphi_0(u)\varphi_0(v)\|_{2,\tau_M}
   \leq \gamma(\varepsilon).
\]
This proves the lemma.
\end{proof}

\begin{corollary}\label{C.matrix-unitary-correction}\label{cor:matrix-unitary-correction}
There exists a function $\beta\colon [0,1]\to [0,\infty)$ with $\beta(\varepsilon)\to0$ as $\varepsilon\to0$ such that the following holds for every $n\in\bbN$ and every tracial von Neumann algebra $(M,\tau_M)$. Let
$
\varphi\colon {\rm M}_{n}(\mathbb C)_{\le 1}\to M_{\le 1}
$
be an $\varepsilon$-unital $L^2$-$\varepsilon$-$*$-homomorphism. Then there exist
\begin{itemize}[leftmargin=2em]
\item a semifinite tracial von Neumann algebra $(P,\Tr)$;
\item finite projections $p,e\in P$ with
\[
\Tr(p)=1,
\qquad
pPp\cong M,
\qquad
\Tr(e)\in[1-\beta(\varepsilon),1+\beta(\varepsilon)];
\]
\item and a continuous homomorphism
$
\pi\colon {\rm U}(n)\to \mathcal U(ePe)
$
\end{itemize}
such that, after identifying $M$ with $pPp$, one has
\[
\sup_{u\in{\rm U}(n)}
\|\varphi(u)-\pi(u)\|_{2,\Tr}
\le \beta(\varepsilon), \quad
\|e-p\|_{2,\Tr}\le \beta(\varepsilon),
\]
\[
\sup_{z\in\mathbb T}
\|\pi(zI_n)-ze\|_{2,\Tr}
\le \beta(\varepsilon),
\]
and
\[
\sup_{u\in{\rm U}(n)}
\left|
\Tr(\pi(u))-\tau_n(u)
\right|
\le \beta(\varepsilon).
\]
\end{corollary}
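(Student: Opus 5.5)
The plan is to chain Lemma~\ref{lem:matrix-unitary-restriction}, Corollary~\ref{cor:quasi-isometry-lipschitz} and Theorem~\ref{app:thm-dCOT}, and then obtain the three extra estimates from the trace control of Proposition~\ref{P.uniform-trace-control-finite-factors}.

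\emph{Building the approximate representation.} First restrict $\varphi$ to ${\rm U}(n)\subseteq {\rm M}_n(\bbC)_{\le 1}$. Lemma~\ref{lem:matrix-unitary-restriction} gives $\varphi_0\colon{\rm U}(n)\to\Unitary(M)$ that is $\gamma(\varepsilon)$-close to $\varphi$ and is a $\gamma(\varepsilon)$-representation. Since ${\rm M}_n(\bbC)$ is a finite factor, Corollary~\ref{cor:quasi-isometry-lipschitz} shows that $\varphi$ is $\eta(\varepsilon)$-isometric on ${\rm U}(n)$. Hence $\varphi_0$ is $(\eta(\varepsilon)+2\gamma(\varepsilon))$-isometric. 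Both moduli are independent of $n$.

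\emph{Applying the stability theorem.} Let $\delta(\varepsilon)\to0$ be the smallest $\delta$ for which the threshold of Theorem~\ref{app:thm-dCOT} exceeds $\max(\gamma,\eta+2\gamma)$, with a slack factor. Because the quantifiers of that theorem are uniform in $n$, this is a function of $\varepsilon$ alone. The theorem then yields $(P,\Tr)$, a projection $p$ with $pPp\cong M$, a projection $q=:e$ with $|\Tr(e)-1|\le\delta$, and a continuous homomorphism $\pi$ with $\sup_u\|\varphi_0(u)-\pi(u)\|_{2,\Tr}<\delta$. Adding $\gamma$ gives the first estimate.

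\emph{The three additional estimates.} For the trace estimate, note that $\Tr(\varphi(u))=\tau_M(\varphi(u))$ because $\varphi(u)\in pPp$. Then
\[
|\Tr(\pi(u))-\Tr(\varphi(u))|\le \|\pi(u)-\varphi(u)\|_{2,\Tr}\,\|e\vee p\|_{2,\Tr},
\]
using Cauchy--Schwarz after inserting the projection $e\vee p$, whose trace is at most $2+\delta$. Combining this with Proposition~\ref{P.uniform-trace-control-finite-factors} gives $|\Tr(\pi(u))-\tau_n(u)|\le\eta_0(\varepsilon)+C(\delta+\gamma)$.

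For $\|e-p\|_{2,\Tr}$, take $u=1$: $\pi(1)=e$ and $\|\varphi(1)-p\|_2\le\varepsilon$, so $\|e-p\|_2\le\varepsilon+\gamma+\delta$.

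For the central unitaries, expand
\[
\|\pi(zI)-ze\|_2^2=2\Tr(e)-2\,\mathrm{Re}\,\bar z\,\Tr(\pi(zI)).
\]
The trace estimate gives $\Tr(\pi(zI))\approx z$, hence $\bar z\,\Tr(\pi(zI))\approx 1\approx\Tr(e)$, and the expression is small. Alternatively, compare directly: $\pi(zI)\approx\varphi(zI)\approx z\varphi(1)\approx zp\approx ze$. Finally set $\beta$ to be the maximum of all these bounds, and $\beta(\varepsilon)\to0$.

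\emph{Main obstacle.} The key point is that Theorem~\ref{app:thm-dCOT} is uniform in $n$. As stated, $\varepsilon$ depends only on $\delta$, and I take that for granted; the dCOT averaging argument is dimension-free. A second delicate point is bookkeeping inside $P$: $\varphi(u)$ lives in $pPp$ while $\pi(u)$ lives in $ePe$, with $e\neq p$. Norms must therefore be computed in $L^2(P,\Tr)$, and trace differences bounded by Cauchy--Schwarz against a finite projection dominating both $e$ and $p$.
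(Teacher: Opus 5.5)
Your proposal is correct and follows the paper's proof essentially step by step. You restrict to ${\rm U}(n)$ via Lemma~\ref{lem:matrix-unitary-restriction}, get almost-isometry from Corollary~\ref{cor:quasi-isometry-lipschitz}, apply Theorem~\ref{app:thm-dCOT} (whose uniformity in $n$ the paper also takes for granted), and then derive the bound on $\|e-p\|_{2,\Tr}$, the scalar-circle bound and the trace bound from triangle inequalities, Cauchy--Schwarz and Proposition~\ref{P.uniform-trace-control-finite-factors}, with only cosmetic differences: you use the one-sided Lipschitz bound of Corollary~\ref{cor:quasi-isometry-lipschitz} directly where the paper rederives it via $(u-w)/2$, and you apply Cauchy--Schwarz against $e\vee p$ where the paper splits $\Tr(\pi(v))$ into $\Tr(p\pi(v))$ and $\Tr((e-p)\pi(v))$.
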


\begin{proof}
Let $\beta_1$ be the modulus from Lemma~\ref{lem:matrix-unitary-restriction}, let $\beta_2$ be the modulus from Corollary~\ref{cor:quasi-isometry-lipschitz}, and let $\beta_3$ be the modulus from Proposition~\ref{P.uniform-trace-control-finite-factors}. By Theorem~\ref{app:thm-dCOT}, we also get a function
\[
\beta_4\colon [0,\infty)\to [0,\infty)
\]
with $\beta_4(\eps)\to0$ as $\eps\to0$ such that every $\eps$-isometric $\eps$-representation of ${\rm U}(n)$ admits the conclusion of Theorem~\ref{app:thm-dCOT} with error bound $\beta_4(\eps)$.

Define
\[
\beta_5(\varepsilon):=2\beta_2(\varepsilon)+4\varepsilon+2\beta_1(\varepsilon),
\qquad
\beta_6(\varepsilon):=\beta_1(\varepsilon)+\beta_4\bigl(\beta_5(\varepsilon)\bigr),
\]
and
\[
\beta(\varepsilon)
:=
\max\Biggl\{
\begin{aligned}
&\beta_6(\varepsilon)+\varepsilon,
\qquad 2\beta_6(\varepsilon)+\varepsilon,
\qquad (\beta_6(\varepsilon)+\varepsilon)^2, \\
&\beta_6(\varepsilon)+\beta_3(\varepsilon)
+(\beta_6(\varepsilon)+\varepsilon)
\bigl(1+(\beta_6(\varepsilon)+\varepsilon)^2\bigr)^{1/2}
\end{aligned}
\Biggr\}.
\]
Then $\beta(\varepsilon)\to0$ as $\varepsilon\to0$.

By Lemma~\ref{lem:matrix-unitary-restriction}, we may choose a map $u\mapsto v_u\in \mathcal U(M)$ such that
\[
\sup_{u\in {\rm U}(n)}\|\varphi(u)-v_u\|_{2,\tau_M}\le \beta_1(\varepsilon)
\]
and
\[
\sup_{u,w\in {\rm U}(n)}\|v_{uw}-v_uv_w\|_{2,\tau_M}\le \beta_1(\varepsilon).
\]
For $u,w\in {\rm U}(n)$, approximate additivity and scalar homogeneity give
\[
\Bigl\|2\varphi\Bigl(\frac{u-w}{2}\Bigr)-\bigl(\varphi(u)-\varphi(w)\bigr)\Bigr\|_{2,\tau_M}\le 4\varepsilon.
\]
Since $(u-w)/2$ is a contraction, Corollary~\ref{cor:quasi-isometry-lipschitz} yields
\[
\big|\|\varphi(u)-\varphi(w)\|_{2,\tau_M}-\|u-w\|_{2,n}\big|
\le 4\varepsilon+2\beta_2(\varepsilon)
\qquad (u,w\in {\rm U}(n))
\]
because
\[
\begin{aligned}
\big|\|\varphi(u)-\varphi(w)\|_{2,\tau_M}-\|u-w\|_{2,n}\big|
&\le
\Biggl|\|\varphi(u)-\varphi(w)\|_{2,\tau_M}
-2\Bigl\|\varphi\Bigl(\frac{u-w}{2}\Bigr)\Bigr\|_{2,\tau_M}\Biggr| \\
&\quad+
2\Biggl|\Bigl\|\varphi\Bigl(\frac{u-w}{2}\Bigr)\Bigr\|_{2,\tau_M}
-\Bigl\|\frac{u-w}{2}\Bigr\|_{2,n}\Biggr| \\
&\le 4\varepsilon+2\beta_2(\varepsilon).
\end{aligned}
\]
Hence
\[
\|v_u-v_w\|_{2,\tau_M}\le \|u-w\|_{2,n}+\beta_5(\varepsilon)
\qquad (u,w\in {\rm U}(n)).
\]
Thus the map $u\mapsto v_u$ is a $\beta_5(\varepsilon)$-isometric $\beta_5(\varepsilon)$-representation. Applying Theorem~\ref{app:thm-dCOT}, we obtain $(P,\Tr)$, projections $p,e\in P$, and a continuous homomorphism
$
\pi\colon {\rm U}(n)\to \mathcal U(ePe)
$
with $\Tr(p)=1$, $\Tr(e)\in[1-\beta_4(\beta_5(\varepsilon)),1+\beta_4(\beta_5(\varepsilon))]$, $pPp\cong M$, and
\[
\sup_{u\in {\rm U}(n)}\|v_u-\pi(u)\|_{2,\Tr}\le \beta_4\bigl(\beta_5(\varepsilon)\bigr).
\]
Therefore
\[
\sup_{u\in {\rm U}(n)}\|\varphi(u)-\pi(u)\|_{2,\Tr}
\le \beta_6(\varepsilon)
\le \beta(\varepsilon),
\]
and also
\[
\Tr(e)\in[1-\beta(\varepsilon),1+\beta(\varepsilon)].
\]

Since $e=\pi(1)$, $\varphi$ is $\varepsilon$-unital, and
$\|\pi(1)-\varphi(1)\|_{2,\Tr}\le\beta_6(\varepsilon)$, we have
\[
\|e-p\|_{2,\Tr}\le \beta_6(\varepsilon)+\varepsilon\le \beta(\varepsilon).
\]
In particular,
$
|\Tr(e)-1|\le \|e-p\|_{2,\Tr}^2
$,
so
$
\Tr(e)\in[1-\beta(\varepsilon),1+\beta(\varepsilon)].
$

The scalar-circle estimate is now obtained directly:
\[
\begin{aligned}
\|\pi(zI_n)-ze\|_{2,\Tr}
&\le
\|\pi(zI_n)-\varphi(zI_n)\|_{2,\Tr}  \\
&\quad+
\|\varphi(zI_n)-z\varphi(1)\|_{2,\Tr}
+
\|z\varphi(1)-ze\|_{2,\Tr}  \\
&\le 2\beta_6(\varepsilon)+\varepsilon
\le \beta(\varepsilon).
\end{aligned}
\]

For $v\in{\rm U}(n)$, the estimate
$
\|\pi(v)-\varphi(v)\|_{2,\Tr}\le \beta_6(\varepsilon)
$
implies
\[
\begin{aligned}
\left|
\Tr\bigl(p\pi(v)\bigr)-\tau_M\bigl(\varphi(v)\bigr)
\right|
&=\left|\Tr\bigl((\pi(v)-\varphi(v))p\bigr)\right| \\
&\le \|\pi(v)-\varphi(v)\|_{2,\Tr} \\
&\le \beta_6(\varepsilon).
\end{aligned}
\]
By Proposition~\ref{P.uniform-trace-control-finite-factors},
\[
\left|
\tau_M\bigl(\varphi(v)\bigr)-\tau_n(v)
\right|
\le \beta_3(\varepsilon),
\]
we get
\[
\left|
\Tr\bigl(p\pi(v)\bigr)-\tau_n(v)
\right|
\le \beta_6(\varepsilon)+\beta_3(\varepsilon)
\qquad (v\in {\rm U}(n)).
\]
Since $\pi(v)\in ePe$ is unitary in $ePe$, Cauchy's inequality gives
\[
\begin{aligned}
\left|
\Tr(\pi(v)) - \Tr\bigl(p\pi(v)\bigr)
\right|
&=
\left|\Tr\bigl((e-p)\pi(v)\bigr)\right| \\
&\le
\|e-p\|_{2,\Tr}\,\|\pi(v)\|_{2,\Tr} \\
&\le
\bigl(\beta_6(\varepsilon)+\varepsilon\bigr)
\bigl(1+(\beta_6(\varepsilon)+\varepsilon)^2\bigr)^{1/2},
\end{aligned}
\]
and therefore
$
\left|
\Tr(\pi(v))-\tau_n(v)
\right|
\le \beta(\varepsilon).
$
This proves the corollary.
\end{proof}

For each $n$, fix a Hilbert--Schmidt orthonormal basis
$
\mathcal X_n\subset \mathfrak{su}(n)
$
whose elements satisfy $\|X\|\le 2$; such a basis can be obtained from the generalized Weyl unitaries by taking suitable skew-adjoint real and imaginary parts.

We proceed with the eigenvalue estimates for the Casimir operator.

\begin{lemma}\label{app:lem-casimir-gap}
Let $\sigma:{\rm U}(n)\to {\rm U}(d)$ be an irreducible representation on which
the center acts by the defining character $z\mapsto z$, and let
$\rho_\sigma=d\sigma_e|_{\mathfrak{su}(n)}$ be the induced
representation of $\mathfrak{su}(n)$. For any Hilbert--Schmidt
orthonormal basis $\{X_1,\dots,X_{n^2-1}\}$ of $\mathfrak{su}(n)$,
the operator
\[
C_\sigma:=\frac{1}{n^2-1}\sum_{j=1}^{n^2-1}\rho_\sigma(X_j)^*\rho_\sigma(X_j)
\]
is a scalar multiple of the identity. That scalar is equal to $1$ for
the standard representation, and is at least $2$ for every other such
irreducible representation.
\end{lemma}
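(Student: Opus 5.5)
The plan is to identify $C_\sigma$ with a normalized Casimir element, use Schur's lemma to see it is scalar, and then compare its eigenvalue on each admissible highest weight with the value at the standard one. I have only spot-checked the final comparison on examples; the general inequality in the last step is not proved here.

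\emph{Scalar and basis-free.} Since $\rho_\sigma(X)$ is skew-adjoint for $X\in\mathfrak{su}(n)$, we have $\rho_\sigma(X_j)^*\rho_\sigma(X_j)=-\rho_\sigma(X_j)^2$. Hence $(n^2-1)C_\sigma=-\sum_j\rho_\sigma(X_j)^2$ is the image of the Casimir element of $\mathfrak{su}(n)$ for the invariant form $\langle X,Y\rangle=\tau_n(X^*Y)$. I read ``Hilbert--Schmidt orthonormal'' with respect to the normalized trace $\tau_n$; this is the normalization under which the value $1$ comes out.

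The expression $\sum_j X_j\otimes X_j$ is independent of the orthonormal basis, because orthonormal bases differ by an orthogonal change of basis. It is also $\Ad$-invariant, so $C_\sigma$ commutes with $\sigma({\rm U}(n))$. The center of ${\rm U}(n)$ acts by scalars, so $\sigma|_{\SU(n)}$ is still irreducible. By Schur's lemma, $C_\sigma$ is therefore a scalar $c_\sigma$.

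\emph{Standard representation.} For $\sigma=\mathrm{id}$, take the trace: $\tau_n(X_j^*X_j)=1$ for each $j$. It follows that $\sum_jX_j^*X_j=(n^2-1)I$, so $c_\sigma=1$.

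\emph{Eigenvalue formula.} Parametrize $\sigma$ by its highest weight $\lambda=(\lambda_1\ge\dots\ge\lambda_n)\in\mathbb Z^n$. The condition that the center acts by $z\mapsto z$ means exactly $\sum_i\lambda_i=1$. The Freudenthal formula, with the above normalization, gives
\[
c_\sigma=\frac{\kappa(\lambda)}{\kappa(\omega_1)},\qquad
\kappa(\lambda)=\|\lambda+\delta\|^2-\|\delta\|^2-\tfrac1n\Bigl(\sum_i\lambda_i\Bigr)^2 .
\]
Here $\delta_i=(n+1-2i)/2$, the norm is Euclidean, and $\omega_1=(1,0,\dots,0)$. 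One checks $\kappa(\omega_1)=n-\tfrac1n$. The task is then to show $\kappa(\lambda)\ge 2(n-\tfrac1n)$ for every dominant $\lambda\neq\omega_1$ with $\sum\lambda_i=1$.

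\emph{Comparison via gaps (main obstacle).} This combinatorial inequality is the step I expect to be hardest. Since $\sum\lambda_i$ is fixed, $\kappa(\lambda)+\mathrm{const}$ is the centered sum of squares of the strictly decreasing vector $\mu=\lambda+\delta$. Its consecutive gaps are $1+e_i$, where $e_i=\lambda_i-\lambda_{i+1}\ge0$ for $1\le i\le n-1$.

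The centered sum of squares is a positive-semidefinite quadratic function of the gap vector, with all cross terms nonnegative. So it is monotone in each $e_i$. A unit increase of $e_i$ alone costs $i(n-i)/n$ above the baseline, plus nonnegative interaction terms.

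The constraint $\sum\lambda_i=1$ translates into $\sum_i i\,e_i\equiv1\pmod n$. The only gap vector with $\sum e_i=1$ satisfying this is $e=(1,0,\dots,0)$, which is the standard representation. Every other admissible $e$ therefore has $\sum e_i\ge2$, or has a single $e_i\ge 2$ at a position $i$ with $2i\equiv1 \pmod n$.

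I would then minimize the quadratic over these configurations, first directly over the candidates $(1,1,0,\dots,0,-1)$ and $(2,0,\dots,0,-1)$. These give $\kappa=3n-1-\tfrac1n$ and a larger value respectively, and both exceed $2(n-\tfrac1n)$. A general configuration dominates one of these by the monotonicity in the $e_i$; I would make this reduction precise using the positivity of the cross terms together with $i(n-i)/n\ge (n-1)/n$. Finally I would check the small cases $n\le 3$ by hand.
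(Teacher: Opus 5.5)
\textbf{There is a genuine gap: the final inequality is not proved, and the reduction you sketch for it would fail.} Up to the eigenvalue formula your argument is sound and follows the paper's route: Schur's lemma, then the highest-weight Casimir formula. Your ratio $\kappa(\lambda)/\kappa(\omega_1)$ is exactly the paper's $c(\mu)$ with $\mu_i=\lambda_i-\lambda_n$. The missing piece is $\kappa(\lambda)\ge 2(n-\tfrac1n)$ for all admissible $\lambda\neq\omega_1$, which is the real content of the lemma.

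Writing $\mathbf e_k$ for the standard basis vectors, it is not true that every admissible gap vector $e\neq\mathbf e_1$ dominates coordinatewise one of your candidates $\mathbf e_2+\mathbf e_{n-1}$ (that is, $\lambda=(1,1,0,\dots,0,-1)$) or $2\mathbf e_1+\mathbf e_{n-1}$. For $n\ge4$ the admissible vector $e=(n+1)\mathbf e_1$, i.e.\ $\lambda=(n,-1,\dots,-1)$, dominates neither. Neither does $e=\mathbf e_3+\mathbf e_{n-2}$ for $n\ge5$. So monotonicity in the $e_i$ cannot reduce the problem to finitely many candidates.

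There are also smaller slips:
\begin{itemize}
\item Your classification of the small configurations is off. The conditions $\sum_ie_i=2$ and $\sum_i ie_i\equiv1\pmod n$ force $e=\mathbf e_i+\mathbf e_{n+1-i}$ with $2\le i\le (n+1)/2$, and there are none for $n=2$. By contrast, $2\mathbf e_1+\mathbf e_{n-1}$ has $\sum_ie_i=3$.
\item For $\lambda=(1,1,0,\dots,0,-1)$ one gets $\kappa=3n-2-\tfrac1n$, not $3n-1-\tfrac1n$. This gives the paper's value $(3n+1)/(n+1)$.
\item The estimate you name, $i(n-i)/n\ge (n-1)/n$, is far too weak. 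You must gain roughly $n$ over $\kappa(\omega_1)=n-\tfrac1n$.
\end{itemize}

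\textbf{How to close the gap in your own coordinates.} Write the formula out:
\[
\kappa(\lambda)=\sum_k k(n-k)e_k+\tfrac1n\sum_{k,l}\min(k,l)\bigl(n-\max(k,l)\bigr)e_ke_l .
\]
The linear coefficients $k(n-k)\ge n-1$, which come from the interaction of $e$ with the baseline gaps, carry the estimate. Put $s=\sum_ke_k$.
\begin{itemize}
\item If $s\ge3$: the bounds $k(n-k)\ge n-1$ and $\min(k,l)(n-\max(k,l))\ge1$ give $\kappa\ge (n-1)s+s^2/n$, which exceeds $2(n-\tfrac1n)$.
\item If $s=2$: the configurations $\mathbf e_i+\mathbf e_{n+1-i}$ give $\kappa\ge 3n-2-\tfrac1n=2(n-\tfrac1n)+(n-1)^2/n$.
\item If $s=1$: this is the standard representation.
\end{itemize}
With this case split your route would be complete, and it treats all sizes $|\mu|$ at once. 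The paper instead balances the entries of $\mu$ for fixed $S=|\mu|$ and then compares the smallest admissible values of $S$.
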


\begin{proof}
The operator \(C_\sigma\) is the image of the normalized quadratic Casimir
element for \(\mathfrak{su}(n)\). Hence it commutes with the irreducible
\(\SU(n)\)-representation \(\sigma|_{\SU(n)}\), and is therefore scalar by
Schur's lemma.

Let
\[
\lambda=(\lambda_1,\ldots,\lambda_n),\qquad
\lambda_1\geq\cdots\geq \lambda_n,
\]
be the highest weight of $\sigma$. Put $k:=\lambda_n$ and
$
\mu_i:=\lambda_i-\lambda_n.
$
Then $\mu_n=0$, $\mu_1\geq\cdots\geq\mu_n=0$, and
$
V_\lambda \cong (\det)^k\otimes V_\mu
$ can be seen easily.
The restriction to ${\rm SU}(n)$, and hence the value of the Casimir operator on
$\mathfrak{su}(n)$, depends only on $\mu$.

The assumption that the center acts by the defining character means that
$
\sigma(zI_n)=zI$ for all $z\in\mathbb T$.
Equivalently,
\[
|\lambda|:=\lambda_1+\cdots+\lambda_n=1.
\]
Since
\[
|\mu|=|\lambda|-nk=1-nk,
\]
we have
$
|\mu|\equiv 1 \pmod n.
$
Thus either $|\mu|=1$, in which case
$
\mu=(1,0,\ldots,0)
$
and $\sigma$ is the standard representation, or else
$
|\mu|\geq n+1.
$

By the highest-weight formula for the Casimir eigenvalue;
see \cite[Chapter~V, Proposition~5.28(b)]{KnappLieGroups}, the Casimir operator acts on the irreducible
\(\SU(n)\)-module of highest weight \(\mu\) by
\[
   \langle \tilde\mu,\tilde\mu+2\rho\rangle ,
   \qquad
   \tilde\mu=\mu-\frac{|\mu|}{n}(1,\ldots,1).
\]
With our Hilbert--Schmidt normalization and the additional factor
\((n^2-1)^{-1}\), this becomes
\[
c(\mu)
=
\frac{n}{n^2-1}
\left(
   n|\mu|
   +
   \sum_{j=1}^n \mu_j(\mu_j+1-2j)
   -
   \frac{|\mu|^2}{n}
\right).
\]

For $\mu=(1,0,\ldots,0)$, this gives $c(\mu)=1$.

It remains to show that $c(\mu)\geq 2$ for every other partition
$\mu$ with $\mu_n=0$ and $|\mu|\equiv 1\pmod n$.
We first minimize the expression for fixed $S:=|\mu|$. Since the terms
$nS-S^2/n$ are fixed, it suffices to minimize
\[
F(\mu):=\sum_{j=1}^n \mu_j(\mu_j+1-2j).
\]
Suppose that for some $i<n-1$ one has $\mu_i\geq \mu_{i+1}+2$.
Replace $(\mu_i,\mu_{i+1})$ by $(\mu_i-1,\mu_{i+1}+1)$. This preserves
the condition of being a partition and changes $F$ by
\[
\begin{aligned}
&\bigl((\mu_i-1)(\mu_i-2i)-\mu_i(\mu_i+1-2i)\bigr) \\
&\quad+
\bigl((\mu_{i+1}+1)(\mu_{i+1}-2i)-\mu_{i+1}(\mu_{i+1}-2i-1)\bigr) \\
&=
2(\mu_{i+1}-\mu_i+2)\leq 0.
\end{aligned}
\]
It is strictly negative unless $\mu_i=\mu_{i+1}+2$. Iterating, we see
that at a minimum for fixed $S$, the first $n-1$ entries differ by at
most $1$. Therefore, for fixed $S$, the minimum is attained when the
first $n-1$ entries of $\mu$ are as balanced as possible.

Write
\[
S=q(n-1)+r,\qquad 0\leq r\leq n-2 .
\]
The balanced partition is
\[
\mu^{(S)}
=
(\underbrace{q+1,\ldots,q+1}_{r\ \mathrm{times}},
\underbrace{q,\ldots,q}_{n-1-r\ \mathrm{times}},
0).
\]
A direct substitution into the Casimir formula gives
\[
c(\mu^{(S)})
=
\frac{(n+q-r)(nq+nr-q+r)}{n^2-1}.
\]
Indeed, under the congruence condition $r-q\equiv 1\pmod n$, the
expression
\[
\frac{(n+q-r)(nq+nr-q+r)}{n^2-1}
\]
is minimized, among all non-standard admissible pairs $(q,r)$, by
$
(q,r)=(3,0)$ for $n=2$,
by
$
(q,r)=(2,0)$ for $n=3$,
and by
$
(q,r)=(1,2)$ for $n \geq 4$.
This gives respectively $5$, $5/2$, and $(3n+1)/(n+1)>2$.

Now impose $S\equiv 1\pmod n$. Since $S=q(n-1)+r$, this is equivalent
to
\[
r-q\equiv 1\pmod n .
\]
The case $S=1$ is $q=0$, $r=1$, giving the standard representation.

For the remaining cases, the smallest possible $S$ is as follows. If
$n=2$, then $S=3$, and the minimizing partition is $(3,0)$, giving
$
c(3,0)=5.
$
If $n=3$, then $S=4$, and the minimizing partition is $(2,2,0)$, giving
$
c(2,2,0)=\frac52.
$
If $n\geq 4$, then the smallest possible $S>1$ is $S=n+1$, with
balanced partition
$
\mu=(2,2,1,\ldots,1,0),
$
where the entry $1$ occurs $n-3$ times. In this case
\[
c(\mu)=\frac{3n+1}{n+1}=3-\frac{2}{n+1}>2.
\]
For larger admissible values of $S$, the balanced value above is larger,
and every other partition of the same size has the Casimir value which is at least the balanced value. Therefore every non-standard irreducible representation with central character $z\mapsto z$ has normalized Casimir value at least $2$. This proves the lemma.
\end{proof}

\begin{theorem}\label{T.standard-representation-extraction}\label{thm:matrix-rep}
There exists a function $\xi\colon [0,1]\to [0,\infty)$ with $\xi(\varepsilon)\to 0$ as $\varepsilon\to 0$ such that the following holds for every $n\in\bbN$. Let $(P,\Tr)$ be a semifinite tracial von Neumann algebra, let $e\in P$ be a finite projection, and let
$
\pi\colon {\rm U}(n)\to \mathcal U(ePe)
$
be a continuous group homomorphism. Assume that
\[
\sup_{z\in\mathbb T}
\|\pi(zI_n)-ze\|_{2,\Tr}\le\varepsilon
\]
and
\[
\sup_{u\in{\rm U}(n)}
\left|
\Tr(\pi(u))-\tau_n(u)
\right|
\le\varepsilon.
\]
Then there exist a finite projection $q\le e$ and a $*$-homomorphism
$
\psi\colon {\rm M}_{n}(\mathbb C)\to qPq
$
with $\psi(1)=q$ such that
\[
\sup_{u\in {\rm U}(n)}\|\pi(u)-\psi(u)\|_{2,\Tr}\le \xi(\varepsilon).
\]
\end{theorem}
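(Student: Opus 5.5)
We decompose $\pi$ into isotypic components. Because ${\rm U}(n)$ is compact and $\pi$ is a continuous unitary representation into $ePe$, Peter--Weyl gives central projections of $\pi({\rm U}(n))''$ that sum to $e$. Concretely, for each irreducible $\sigma$ put
\[
P_\sigma:=\dim(\sigma)\int_{{\rm U}(n)}\overline{\chi_\sigma(u)}\,\pi(u)\,du\in ePe .
\]
These are pairwise orthogonal projections with $\sum_\sigma P_\sigma=e$ (strong convergence), and each $P_\sigma$ commutes with $\pi$. For every $\sigma$ with $P_\sigma\neq0$ we have $P_\sigma\pi(\cdot)P_\sigma\cong\sigma\otimes 1$. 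Let $q:=P_{\rm std}$ be the projection of the standard representation $u\mapsto u$. On $qPq$ the representation is a multiple of the standard one, so $\pi(u)q=\psi(u)$ for a unital $*$-homomorphism $\psi\colon{\rm M}_n(\mathbb C)\to qPq$. Indeed, the span of the matrix coefficients of the standard representation is ${\rm M}_n(\mathbb C)$, and $\pi$ extends linearly to it; equivalently, $q\pi({\rm U}(n))''q$ is a quotient of ${\rm M}_n(\mathbb C)\otimes 1$. Then $\|\pi(u)-\psi(u)\|_{2,\Tr}=\|\pi(u)(e-q)\|_{2,\Tr}=\Tr(e-q)^{1/2}$. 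So it suffices to show $\Tr(e-q)\le\xi(\varepsilon)^2$.

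\emph{Step 1: the wrong central character has small mass.} Decompose $e=\sum_{k\in\mathbb Z}E_k$ by the central character, so that $\pi(zI_n)=\sum_k z^kE_k$. Then
\[
\int_{\mathbb T}\|\pi(zI_n)-ze\|_{2,\Tr}^2\,dz=\sum_{k\ne1}\Tr(E_k)\cdot 2\le\varepsilon^2 .
\]
Hence $\Tr(e-E_1)\le\varepsilon^2/2$, and we may restrict attention to $E_1$, whose irreducible components have central character $z\mapsto z$.

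\emph{Step 2: Casimir/trace comparison on $E_1$.} This is the main obstacle, because the constant must be independent of $n$. The plan is to test $\pi$ on the one-parameter groups $\exp(tX)$, $X\in\mathcal X_n$, and average over $X$. On $E_1$,
\[
\frac1{n^2-1}\sum_{X\in\mathcal X_n}\Tr\bigl(E_1(1-\pi(e^{tX}))\bigr)\approx\frac{t^2}{2}\,\Tr\bigl(E_1C\bigr),
\]
where $C=\sum_\sigma c(\sigma)P_\sigma$. By Lemma~\ref{app:lem-casimir-gap}, $\Tr(E_1C)\ge \Tr(q)+2\Tr(E_1-q)$. On the other hand, the hypothesis $|\Tr\pi(u)-\tau_n(u)|\le\varepsilon$ combined with $\tau_n(1-e^{tX})\approx t^2\tau_n(-X^2)/2=t^2/(2n)\cdot$(normalization) shows that the left side is at most $\tfrac{t^2}{2}\cdot 1+\varepsilon+\text{error}$. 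We also have $\Tr(e)\approx1$, which follows from $u=1$. Together these give $\Tr(E_1-q)\lesssim \varepsilon/t^2+\text{Taylor error}$.

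The difficulty lies in the Taylor error. The expansion $1-\pi(e^{tX})\approx -t\,d\pi(X)-\tfrac{t^2}{2}d\pi(X)^2$ is not uniform, since $d\pi(X)$ is unbounded on high-Casimir components. I would therefore avoid the expansion and use exact positivity instead. For each component, $\mathrm{Re}(1-\sigma(e^{tX}))=1-\cos(t\,\sigma\text{-eigenvalues})$ is nonnegative, and it is bounded below by $c\min(t^2\lambda^2,1)$ for spectra in a bounded window. So components with large Casimir contribute at least a fixed amount, while components with moderate Casimir contribute $\ge\tfrac{t^2}{2}c(\sigma)(1-O(t^2c(\sigma)))$. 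For the standard part the expansion is exact up to $O(t^4)$, uniformly in $n$, because $\|X\|\le 2$ and the normalized eigenvalues are bounded. Averaging over the basis gives the real part of $\Tr(E_1-\pi)$, which is controlled via the trace hypothesis up to $\varepsilon$ plus the contribution of $e-E_1$ from Step~1. Choosing $t=\varepsilon^{1/4}$ or similar, the gap $2$ versus $1$ then yields $\Tr(E_1-q)\le \xi_0(\varepsilon)$ with $\xi_0$ depending only on $\varepsilon$. An alternative that would avoid the unboundedness is to average $\Tr\pi$ against the character of the standard representation: $\int\Tr(\pi(u))\overline{\tau_n(u)}\,du=\Tr(q)/n$ versus $\int|\tau_n|^2=1/n^2$. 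This only gives $O(n\varepsilon)$ control, however, so it is not dimension-free; hence the Casimir route.

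\emph{Step 3: conclusion.} We have $\Tr(e-q)\le\varepsilon^2/2+\xi_0(\varepsilon)$. Defining $\psi$ as above with $\psi(1)=q$ and setting $\xi(\varepsilon):=(\varepsilon^2/2+\xi_0(\varepsilon))^{1/2}$ completes the proof.
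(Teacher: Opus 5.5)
Your reduction to bounding $\Tr(e-q)$, with $q$ the standard isotypic projection, matches the paper's proof. So do the construction of $\psi$ and Step~1 on the central character. The gap is in Step~2, which is exactly where uniformity in $n$ is at stake.

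For a summand $\sigma$ your test quantity is
\[
\frac{1}{n^2-1}\sum_{X\in\mathcal X_n}\frac{1}{\dim\sigma}\,\mathrm{tr}\bigl(1-\cos(tA_X)\bigr),\qquad A_X:=-i\,d\sigma(X).
\]
The Casimir value $c(\sigma)$ only fixes the averaged \emph{second moment} of the spectra of the $A_X$. The bound $1-\cos(t\lambda)\ge c\min(t^2\lambda^2,1)$ holds only for $|t\lambda|\le\pi$. Moreover, $t$ must be chosen before $\sigma$, since it is a single test applied to all of $\pi$. So high-Casimir summands do leave this window. Already for $\sigma=\det^{-1}\otimes\mathrm{Sym}^{n+1}\mathbb C^n$ one has $c(\sigma)=2n+1$ and $\|A_X\|\ge n+1$ for every $X\in\mathcal X_n$.

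Outside the window, $1-\cos(t\lambda)$ vanishes on $2\pi\mathbb Z/t$. Your claim that such summands ``contribute at least a fixed amount'' is therefore really a uniform character estimate. You would need
\[
\frac{1}{n^2-1}\sum_{X}\mathrm{Re}\,\chi_\sigma(e^{tX})/\dim\sigma\le 1-c_0
\]
for all non-standard $\sigma$ with central character $z\mapsto z$, uniformly in $n$. That is, you must rule out that the weights of $\sigma$, projected along each $X$, concentrate near $2\pi\mathbb Z/t$. Your sketch does not prove this, and without it the dimension-free bound on $\Tr(E_1-q)$ does not follow.

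The paper sidesteps the problem by testing the trace hypothesis against the heat kernel $h_t$ of the normalized Casimir on $\SU(n)$, rather than at finitely many points $e^{tX}$. This $h_t$ is a probability density satisfying $\int_{\SU(n)}\frac{\chi_\sigma(s)}{\dim\sigma}\,h_t(s)\,ds=e^{-tc_\sigma}$ \emph{exactly}. Integrating $|\Tr(\pi(s))-\tau_n(s)|\le\varepsilon$ against $h_t$ therefore gives $\bigl|\sum_\sigma p_\sigma e^{-tc_\sigma}-e^{-t}\bigr|\le\varepsilon$, with no Taylor or wrap-around error.

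Since $c\mapsto e^{-tc}$ is monotone and bounded by $1$, the gap $c_\sigma\ge 2$ from Lemma~\ref{app:lem-casimir-gap} applies directly. At $t=1$ it yields $p_{\rm bad}(e^{-1}-e^{-2})\le 2\varepsilon+p_0(1-e^{-1})$, hence $\Tr(e-q)=O(\varepsilon)$. If you want to keep your one-parameter-subgroup picture, the repair is to replace the single elements $e^{tX}$ by a random walk with many small steps along the $X$'s and pass to the limit. That limit is Brownian motion on $\SU(n)$, i.e.\ the heat kernel again.
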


\begin{proof}
Let $N:=W^*(\pi({\rm U}(n)))\subseteq ePe$. By the Peter--Weyl theorem, there are central projections $Q_\sigma\in N$, indexed by the irreducible representations $\sigma\in\widehat{{\rm U}(n)}$, such that on the $\sigma$-isotypic summand one has
\[
Q_\sigma N\cong B(H_\sigma)\overline\otimes N_\sigma,
\qquad
\pi(u)Q_\sigma\cong \sigma(u)\otimes 1
\qquad (u\in {\rm U}(n)).
\]
Set
$
p_\sigma:=\Tr(Q_\sigma).
$
Then
\[
\Tr\bigl(\pi(u)\bigr)
=
\sum_\sigma p_\sigma\frac{\chi_\sigma(u)}{\dim\sigma}.
\]

Let $Q_1$ be the projection onto the sum of those irreducible summands on which the center of ${\rm U}(n)$ acts by the defining character $z\mapsto z$, and let $Q_{\mathrm{st}}\le Q_1$ be the projection onto the standard isotypic component.

We first control the part outside $Q_1$. Since $\pi(zI_n)$ is central, the contribution of an irreducible summand with central character $z\mapsto z^k$ to $\|\pi(zI_n)-ze\|_{2,\Tr}^2$ is $|z^k-z|^2$. Integrating over $z\in\mathbb T$, we get
\[
2\Tr\bigl(e(1-Q_1)\bigr)\le\varepsilon^2.
\]
Thus, with
$
p_0:=\Tr\bigl(e(1-Q_1)\bigr),
$
we have
$
p_0\le \varepsilon^2/2.
$

We now control the part in $Q_1-Q_{\mathrm{st}}$. We use the heat semigroup on ${\rm SU}(n)$ associated with the normalized Casimir operator
\[
C_\sigma=
\frac{1}{n^2-1}
\sum_{X\in \mathcal X_n} d\sigma(X)^*d\sigma(X),
\]
where $\mathcal X_n\subset \mathfrak{su}(n)$ is the fixed Hilbert--Schmidt orthonormal basis from the beginning of this subsection. Let $h_t$ denote the corresponding heat kernel at some fixed time $t>0$. For an irreducible summand $\sigma$, write $c_\sigma$ for its normalized Casimir eigenvalue. We use the standard Peter--Weyl expansion of the heat kernel on a compact
Lie group; see Fegan~\cite[Section~4, especially (4.6)]{Fegan1978}.
With our normalization of the bi-invariant Laplacian, the \(\sigma\)-isotypic
component is an eigenspace with eigenvalue \(-c_\sigma\), where \(c_\sigma\)
is the corresponding normalized Casimir eigenvalue. Hence, if \(h_t\) denotes
the heat kernel with respect to Haar probability measure, then
\[
   h_t(s)=\sum_{\sigma\in\widehat{\mathrm{SU}(n)}}
      \dim(\sigma)\chi_\sigma(s)e^{-tc_\sigma},
\]
and character orthogonality gives
\[
   \int_{\mathrm{SU}(n)}
      \frac{\chi_\sigma(s)}{\dim\sigma}\,h_t(s)\,ds
   =
   e^{-tc_\sigma}.
\]
For the standard representation $c_{\mathrm{st}}=1$, while by Lemma~\ref{app:lem-casimir-gap}, every non-standard irreducible summand with central character $z\mapsto z$ satisfies $c_\sigma\ge 2$.

Applying the character estimate to \(s\in {\rm SU}(n)\) and integrating
against \(h_t(s)\,ds\) and since
\[
\int_{{\rm SU}(n)}\tau_n(s)h_t(s)\,ds=e^{-t},
\]
we obtain
\[
\left|
   \sum_\sigma p_\sigma e^{-tc_\sigma}
   -
   e^{-t}
\right|
\le \varepsilon.
\]
Since the same character estimate at \(u=1\) gives
$
   |\operatorname{Tr}(e)-1|\le\varepsilon,
$
we also have
\[
\left|
   \sum_\sigma p_\sigma e^{-tc_\sigma}
   -
   \operatorname{Tr}(e)e^{-t}
\right|
\le
\varepsilon+\varepsilon e^{-t}
\le 2\varepsilon.
\]
Let
\[
   p_{\rm st}:=\operatorname{Tr}(Q_{\rm st}),\qquad
   p_{\rm bad}:=\operatorname{Tr}(Q_1-Q_{\rm st}).
\]
Since
$
   \operatorname{Tr}(e)=p_{\rm st}+p_{\rm bad}+p_0,
$
and since \(c_{\rm st}=1\), while \(c_\sigma\ge2\) on
\(Q_1-Q_{\rm st}\), we get
\[
   \sum_\sigma p_\sigma e^{-tc_\sigma}
   \le
   p_{\rm st}e^{-t}+p_{\rm bad}e^{-2t}+p_0.
\]
Therefore
\[
   p_{\rm bad}(e^{-t}-e^{-2t})
   \le
   2\varepsilon+p_0(1-e^{-t}).
\]
Choosing \(t=1\), we get \(p_{\rm bad}\le C\varepsilon\).
Since
$
p_0\le \varepsilon^2/2,
$
it follows that
\[
\Tr(e-Q_{\mathrm{st}})
=
p_0+p_{\mathrm{bad}}
\le C'\varepsilon.
\]

On the standard isotypic component one has
$
Q_{\mathrm{st}}N\cong {\rm M}_n(\mathbb C)\overline\otimes N_{\mathrm{st}}
$
for some von Neumann algebra $N_{\mathrm{st}}$, and under this identification
\[
\pi(u)Q_{\mathrm{st}}\cong u\otimes 1
\qquad (u\in {\rm U}(n)).
\]
Let $q:=Q_{\mathrm{st}}$ and let
$
\psi\colon {\rm M}_n(\mathbb C)\to qPq
$
be the corresponding inclusion of the first tensor factor. Then $\psi$ is a unital $*$-homomorphism and
since $q\le e$ and $e$ is finite, the projection $q$ is finite. Moreover,
\[
\psi(u)=\pi(u)Q_{\mathrm{st}}
\qquad (u\in {\rm U}(n)).
\]
Therefore
\[
\pi(u)-\psi(u)=\pi(u)(e-Q_{\mathrm{st}})
\qquad (u\in {\rm U}(n)),
\]
and hence
\[
\|\pi(u)-\psi(u)\|_{2,\Tr}^2
=
\Tr(e-Q_{\mathrm{st}})
\le C'\varepsilon
\qquad (u\in {\rm U}(n)).
\]
Absorbing the square root into the modulus $\xi$ finishes the proof.
\end{proof}

\begin{theorem}\label{thm:matrix-stability-reduction}
There exists a function $\omega\colon [0,1]\to [0,\infty)$ with $\omega(\varepsilon)\to 0$ as $\varepsilon\to 0$ such that the following holds.
For every $n\in\bbN$, every tracial von Neumann algebra $(M,\tau_M)$, and every map
$
\varphi\colon {\rm M}_{n}(\mathbb C)_{\le 1}\to M_{\le 1}
$
which is an $\varepsilon$-unital $L^2$-$\varepsilon$-$*$-homomorphism, there exist
\begin{itemize}[leftmargin=2em]
\item a semifinite tracial von Neumann algebra $(P,\Tr)$;
\item a trace-one projection $p\in P$ with $pPp\cong M$;
\item a finite projection $q\in P$;    
\item and a unital $*$-homomorphism
$
\psi\colon {\rm M}_{n}(\mathbb C)\to qPq
$
\end{itemize}
such that, after identifying $M$ with $pPp$, one has
\[
\sup_{\|x\|\le 1}
\|\varphi(x)-\psi(x)\|_{2,\Tr}
\le \omega(\varepsilon).
\]
\end{theorem}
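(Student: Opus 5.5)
The plan is to chain Corollary~\ref{cor:matrix-unitary-correction} with Theorem~\ref{T.standard-representation-extraction} to get closeness on the unitary group. Then extend to the whole unit ball by writing contractions as combinations of unitaries.

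\emph{Unitaries.} Given $\varphi$, apply Corollary~\ref{cor:matrix-unitary-correction}. This yields $(P,\Tr)$, a trace-one projection $p$ with $pPp\cong M$, and a finite projection $e$ with $\|e-p\|_{2,\Tr}\le\beta(\varepsilon)$. It also yields a continuous homomorphism $\pi\colon{\rm U}(n)\to\mathcal U(ePe)$ satisfying the central-character bound and the trace bound with error $\beta(\varepsilon)$, and $\sup_u\|\varphi(u)-\pi(u)\|_{2,\Tr}\le\beta(\varepsilon)$. These are exactly the hypotheses of Theorem~\ref{T.standard-representation-extraction} with $\varepsilon$ replaced by $\beta(\varepsilon)$. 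That theorem gives a finite projection $q\le e$ and a unital $*$-homomorphism $\psi\colon{\rm M}_n(\bbC)\to qPq$ with $\|\pi(u)-\psi(u)\|_{2,\Tr}\le\xi(\beta(\varepsilon))$. Combining the two estimates,
\[
\sup_{u\in{\rm U}(n)}\|\varphi(u)-\psi(u)\|_{2,\Tr}\le \beta(\varepsilon)+\xi(\beta(\varepsilon))=:\omega_0(\varepsilon).
\]

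\emph{Extension to the unit ball.} Let $x\in{\rm M}_n(\bbC)_{\le1}$. Write $x=a+ib$ with $a,b$ self-adjoint contractions. Each self-adjoint contraction $a$ is $a=\tfrac12(w+w^*)$ with $w=a+i(1-a^2)^{1/2}$ unitary. This gives $x=\tfrac12(u_1+u_2)+\tfrac i2(u_3+u_4)$ with unitaries $u_j$, and $\psi(x)$ is given by the same combination of the $\psi(u_j)$. For $\varphi$, the combination has to be built inside the unit ball so that the approximate additivity and homogeneity in Definition~\ref{def:matrix-approx-star} apply. Concretely, $\tfrac12u_1$ and $\tfrac12u_1+\tfrac12u_2=a$ are contractions, and similarly for $b$, so every partial sum lies in the unit ball. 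Estimate~(1) from the proof of Proposition~\ref{P.uniform-trace-control-finite-factors} is stated for a domain that is a finite factor, but its derivation uses only the axioms, so it holds here. It gives
\[
\Bigl\|\varphi(x)-\tfrac12\sum_{j=1}^2\varphi(u_j)-\tfrac i2\sum_{j=3}^4\varphi(u_j)\Bigr\|_{2}\le C\varepsilon
\]
for an absolute constant $C$, e.g.\ $C=7$ if we group as $\varphi(a+ib)\approx\varphi(a)+i\varphi(b)$ and $\varphi(a)\approx\tfrac12\varphi(u_1)+\tfrac12\varphi(u_2)$. The norms here are taken in $M=pPp$, and $\|\cdot\|_{2,\tau_M}=\|\cdot\|_{2,\Tr}$ on $pPp$ because $\Tr(p)=1$. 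The triangle inequality then gives $\|\varphi(x)-\psi(x)\|_{2,\Tr}\le C\varepsilon+2\omega_0(\varepsilon)=:\omega(\varepsilon)$, uniformly in $x$ and in $n$.

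\emph{Bookkeeping and the main point of care.} All differences are computed in the ambient algebra $P$: $\varphi(x)\in pPp$ and $\psi(x)\in qPq$. Since $\omega$ must not depend on $n$ or $M$, it matters that the moduli $\beta$ and $\xi$ from the earlier results are dimension-free; the statements of Corollary~\ref{cor:matrix-unitary-correction} and Theorem~\ref{T.standard-representation-extraction} guarantee this. The step that needs most care is the unitary-decomposition step: every intermediate sum must stay in the operator-norm unit ball, since $\varphi$ is only defined there, and the scalars used must have modulus at most~$1$. The grouping above handles both.
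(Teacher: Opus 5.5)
Your proposal is correct and follows the paper's proof: you chain Corollary~\ref{cor:matrix-unitary-correction} with Theorem~\ref{T.standard-representation-extraction} to get closeness $\beta(\varepsilon)+\xi(\beta(\varepsilon))$ on ${\rm U}(n)$, then extend to the unit ball via $x=\tfrac12(u_a+u_a^*)+\tfrac{i}{2}(u_b+u_b^*)$, reaching the paper's bound $2\omega_0(\varepsilon)+7\varepsilon$. Your explicit grouping through $\varphi(a)$ and $\varphi(ib)$ is a useful precision the paper leaves implicit, since a naive left-to-right expansion passes through non-contractive partial sums such as $a+\tfrac{i}{2}u_b$ (e.g.\ for $x=-1$); note that routing through $i\varphi(b)$ exactly as you wrote gives $8\varepsilon$, and you get $7\varepsilon$ by splitting $\varphi(ib)\approx\tfrac{i}{2}\varphi(u_3)+\tfrac{i}{2}\varphi(u_4)$ directly, which is immaterial.
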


\begin{proof}
By Corollary~\ref{C.matrix-unitary-correction}, there exist a semifinite tracial von Neumann algebra $(P,\Tr)$, projections $p,e\in P$ with
\[
\Tr(p)=1,
\qquad
\Tr(e)\in [1-\alpha_1(\varepsilon),1+\alpha_1(\varepsilon)],
\qquad
pPp\cong M,
\]
and a continuous homomorphism
$
\pi\colon {\rm U}(n)\to \mathcal U(ePe)
$
\[
\sup_{u\in {\rm U}(n)}\|\varphi(u)-\pi(u)\|_{2,\Tr}\le \alpha_1(\varepsilon)
\]
for some modulus $\alpha_1(\varepsilon)\to 0$. After enlarging $\alpha_1$ if necessary, Corollary~\ref{C.matrix-unitary-correction} also gives
\[
\sup_{u\in {\rm U}(n)}
\left|
\Tr\bigl(\pi(u)\bigr)-\tau_n(u)
\right|
\le \alpha_1(\varepsilon).
\]
Together with the scalar-circle estimate,
\[
\sup_{z\in\mathbb T}\|\pi(zI_n)-ze\|_{2,\Tr}\le \alpha_1(\varepsilon).
\]
Applying Theorem~\ref{T.standard-representation-extraction}, we obtain a finite projection $q\le e$. There is also a $*$-homomorphism
$
\psi\colon {\rm M}_{n}(\mathbb C)\to qPq
$
with $\psi(1)=q$ such that
\[
\sup_{u\in {\rm U}(n)}\|\pi(u)-\psi(u)\|_{2,\Tr}\le \xi(\alpha_1(\varepsilon)).
\]
Put
$
\delta(\varepsilon):=\alpha_1(\varepsilon)+\xi(\alpha_1(\varepsilon)).
$
Then for every $u\in {\rm U}(n)$,
\[
\|\varphi(u)-\psi(u)\|_{2,\Tr}
\le
\|\varphi(u)-\pi(u)\|_{2,\Tr}
+
\|\pi(u)-\psi(u)\|_{2,\Tr}
\le \delta(\varepsilon).
\]

It remains to pass from unitary elements to the whole unit ball. Let $x\in {\rm M}_{n}(\mathbb C)$ with $\|x\|\le 1$, and write $x=a+ib$ with $a=a^*$ and $b=b^*$. Set
\[
u_a:=a+i(1-a^2)^{1/2},
\qquad
u_b:=b+i(1-b^2)^{1/2}.
\]
Then $u_a,u_b\in {\rm U}(n)$ and
$
 a=\frac12\bigl(u_a+u_a^*\bigr),$ and $b=\frac12\bigl(u_b+u_b^*\bigr),$
so
\[
 x=\frac12u_a+\frac12u_a^*+\frac{i}{2}u_b+\frac{i}{2}u_b^*.
\]
Since $\psi$ is linear and $*$-preserving, while $\varphi$ is approximately additive, approximately homogeneous, and approximately $*$-preserving on contractions, we obtain
\[
\begin{aligned}
\Bigl\|\varphi(x)
-\frac12\varphi(u_a)-\frac12\varphi(u_a^*)
-\frac{i}{2}\varphi(u_b)-\frac{i}{2}\varphi(u_b^*)\Bigr\|_{2,\Tr} \le 7\varepsilon.
\end{aligned}
\]
Therefore
\[
\begin{aligned}
\|\psi(x)-\varphi(x)\|_{2,\Tr}
&\le \frac12\|\psi(u_a)-\varphi(u_a)\|_{2,\Tr}
   +\frac12\|\psi(u_a^*)-\varphi(u_a^*)\|_{2,\Tr} \\
&\qquad +\frac12\|\psi(u_b)-\varphi(u_b)\|_{2,\Tr}
   +\frac12\|\psi(u_b^*)-\varphi(u_b^*)\|_{2,\Tr}
   +7\varepsilon \\
&\le 2\delta(\varepsilon)+7\varepsilon.
\end{aligned}
\]
Thus the assertion holds with
$
\omega(\varepsilon):=2\delta(\varepsilon)+7\varepsilon.
$
This proves the theorem.
\end{proof}

\section{Ulam stability for the hyperfinite II$_1$-factor}
\label{sec:hyperfinite}
In this section we prove one of our main results, Theorem~\ref{T.amplified.stability}, which says that the hyperfinite II$_1$ factor is Ulam stable. The main step is to use Theorem~\ref{thm:matrix-stability-reduction} to obtain a sequence of approximate matrix embeddings, and then to take a limit.

Fix an increasing sequence of full matrix subfactors
$
   A_1\subset A_2\subset\cdots\subset \R
$
such that
\[
   \overline{\bigcup_{n\geq1}A_n}^{\|\cdot\|_2}=\R.
\]
Let
$
   E_n:\R\to A_n
$
be the trace-preserving conditional expectation.

\begin{lemma}
\label{L.ucp.limit}
There exists a function
$\omega_0(\varepsilon)\to0$ for $\varepsilon\to0$ such that the following holds.  Let $M$ be a II$_1$-factor and let
$
   \varphi:\R_{\leq1}\to M_{\leq1}
$
be an $L^2$-$\varepsilon$-$*$-homomorphism and $\varepsilon$-unital.  Then there exists a normal unital trace-preserving completely positive map
$
   \psi:\R\to M
$
such that
\[
   \sup_{x\in\R_{\leq1}} \|\varphi(x)-\psi(x)\|_2
   \leq \omega_0(\varepsilon).
\]
Moreover, we have
\[
   \sup_{x,y\in\R_{\leq1}}
   \|\psi(xy)-\psi(x)\psi(y)\|_2
   \leq \omega_0(\varepsilon).
\]
\end{lemma}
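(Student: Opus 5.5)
We restrict $\varphi$ to each matrix subfactor $A_n$, correct it there, and pass to a limit. Concretely, for each $n$ the restriction $\varphi_n:=\varphi|_{(A_n)_{\le1}}$ is an $\varepsilon$-unital $L^2$-$\varepsilon$-$*$-homomorphism. By Theorem~\ref{thm:matrix-stability-reduction} we obtain $(P_n,\Tr_n)$, $p_n$ with $p_nP_np_n\cong M$, a finite projection $q_n$, and a unital $*$-homomorphism $\rho_n\colon A_n\to q_nP_nq_n$ with $\sup_{x\in(A_n)_{\le1}}\|\varphi(x)-\rho_n(x)\|_{2,\Tr_n}\le\omega(\varepsilon)$. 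We then compress back into $M$ by setting
\[
\psi_n(x):=p_n\rho_n(x)p_n+\tau_n(x)\,(p_n-p_n q_np_n)\qquad (x\in A_n).
\]
The second term makes $\psi_n$ unital. The map $\psi_n\colon A_n\to M$ is completely positive: it is a compression of a $*$-homomorphism plus a state times the positive element $p_n-p_nq_np_n$, which is positive since $q_n\le 1$.

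Next we control the error of $\psi_n$. Since $\varphi(x)=p_n\varphi(x)p_n$, we get $\|\varphi(x)-p_n\rho_n(x)p_n\|_{2}\le\omega(\varepsilon)$. Moreover $\|p_n-p_nq_np_n\|_2$ is small, because $\|\varphi(1)-q_n\|_2\le\omega$ and $\|\varphi(1)-p_n\|_2\le\varepsilon$. Hence $\sup_{(A_n)_{\le1}}\|\varphi-\psi_n\|_2\le\omega_1(\varepsilon)$. Combining this with Proposition~\ref{P.uniform-trace-control-finite-factors}, $\tau_M\circ\psi_n$ is uniformly close to $\tau_n$ on $(A_n)_{\le1}$. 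But $A_n$ is a full matrix algebra, so its only tracial state is $\tau_n$. We therefore want an exactly trace-preserving map, and we correct as follows. Set $\psi_n'(x):=\int_{{\rm U}(A_n)} \ldots$ averaged over unitaries? That averaging does not obviously act on the target. A simpler fix is to replace $\psi_n$ by $(1-s)\psi_n+s\,\tau_n(\cdot)1$ with small $s$, adjusted by a positive-density perturbation. More precisely, the functional $\tau_M\circ\psi_n-\tau_n$ is small in norm, and by conditional-expectation tricks ($x\mapsto \psi_n(x)$ plus a correction $\tau$-density term $h\,\tau_n(x)$ with $h$ supported on the trace discrepancy) one obtains a trace-preserving ucp map at small $L^2$ cost. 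I expect this exact trace-preservation adjustment to be a technical nuisance rather than the main obstacle. One can also arrange it after the limit, by composing with $\tau_M$ discrepancies handled via Radon--Nikodym densities of a normal state close to the trace.

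Now extend to all of $\R$ by $\Psi_n:=\psi_n\circ E_n\colon\R\to M$, which is ucp and normal. By point-weak* compactness of ucp maps $\R\to M$ (the BW topology), choose a cluster point $\psi$. The map $\psi$ is ucp and satisfies $\tau_M\circ\psi=\tau_\R$, since $\tau_M\circ\Psi_n=\tau_\R$ after the correction. A ucp trace-preserving map between tracial algebras is automatically normal, because it is $\|\cdot\|_2$-contractive by Kadison--Schwarz and hence $\sigma$-weakly continuous on bounded sets. For $x\in\R_{\le1}$ and $m\le n$, take $x_m:=E_m(x)\in(A_m)_{\le1}$. Then $\Psi_n(x_m)=\psi_n(x_m)$ is $\omega_1$-close to $\varphi(x_m)$ in $L^2$. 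Since $L^2$-balls are weakly closed, the limit gives $\|\psi(x_m)-\varphi(x_m)\|_2\le\omega_1$. Letting $m\to\infty$, $\psi(x_m)\to\psi(x)$ in $L^2$ because $\psi$ is $L^2$-contractive. Also $\|\varphi(x_m)-\varphi(x)\|_2\le\|x_m-x\|_2+\eta(\varepsilon)$ by Corollary~\ref{cor:quasi-isometry-lipschitz}. This yields the first estimate with $\omega_0=\omega_1+\eta$.

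Multiplicativity then follows formally:
\[
\|\psi(xy)-\psi(x)\psi(y)\|_2\le \|\psi(xy)-\varphi(xy)\|_2+\varepsilon+\|\varphi(x)\varphi(y)-\psi(x)\psi(y)\|_2\le 3\omega_0+\varepsilon,
\]
where the last term is split using that all elements involved are contractions. The main obstacle is the step that keeps the error uniform through the weak* limit: we must verify the approximation on all of $(A_m)_{\le1}$ before passing to the limit, and then use the Lipschitz control of Corollary~\ref{cor:quasi-isometry-lipschitz} to reach arbitrary $x$. Securing exact trace preservation of the compressed maps is the secondary difficulty.
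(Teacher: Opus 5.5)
\textbf{There is a genuine gap at the step you call a technical nuisance: exact trace preservation.}

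Your map $\psi_n(x)=p_n\rho_n(x)p_n+\tau_n(x)(p_n-p_nq_np_n)$ is unital, completely positive and $O(\omega(\varepsilon)+\varepsilon)$-close to $\varphi$ on $(A_n)_{\le1}$. But, as you note, $\tau_M\circ\psi_n$ is only approximately $\tau_n$, and none of the proposed repairs gives an exactly trace-preserving map.
\begin{itemize}
\item The convex combination $(1-s)\psi_n+s\,\tau_n(\cdot)1$ has trace functional $(1-s)\tau_M\circ\psi_n+s\tau_n$. This equals $\tau_n$ only if $\tau_M\circ\psi_n$ already does.
\item Adding a term $\tau_n(x)h$ shifts the trace functional by a multiple of $\tau_n$. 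The discrepancy $\tau_M\circ\psi_n-\tau_n$ vanishes at $1$, so it is not such a multiple unless it is zero.
\item Twirling $x\mapsto\int\sigma(u)\psi_n(u^*xu)\sigma(u)^*\,du$ over ${\rm U}(A_n)$ is trace-preserving for any unital $*$-homomorphism $\sigma\colon A_n\to M$. However, it stays close to $\psi_n$ only if $\sigma$ is already close to $\psi_n$, and that is exactly what is missing.
\item After the limit, $\tau_M\circ\psi$ is only a state at small norm distance from $\tau_{\R}$, possibly with a singular part, so there is no density to correct with.
\end{itemize}
This matters for the rest of your argument. Both your normality argument and the $\|\cdot\|_2$-contractivity of $\psi$ via Kadison--Schwarz use $\tau_M\circ\psi=\tau_{\R}$ exactly. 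With only approximate trace preservation, the point-ultraweak limit of $\psi_n\circ E_n$ need not be normal, and the lemma asserts both normality and trace preservation.

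\textbf{The missing idea} is to make each approximant an honest unital $*$-homomorphism $\widehat\theta_n\colon A_n\to M$. Such a map is automatically trace-preserving, because $A_n$ is a full matrix algebra and $M$ is a II$_1$-factor. The paper does this in four steps.
\begin{enumerate}
\item Realize all outputs of Theorem~\ref{thm:matrix-stability-reduction} in one semifinite factor $P=M\overline\otimes B(\ell^2)\overline\otimes B(\ell^2)$, with $p=1\otimes e_{11}\otimes e_{11}$. Then $\|q_n-p_0\|_2\le\omega(\varepsilon)+\varepsilon$, so $|\Tr(q_n)-1|\le(\omega(\varepsilon)+\varepsilon)^2$.
\item In the relative commutant of $\theta_n(A_n)\otimes1$ inside $(q_n\otimes1)P(q_n\otimes1)$, which is a semifinite factor, choose a projection $r_n$ with $\Tr(r_n)=1$ exactly and $\|r_n-q_n\otimes e_{11}\|_2^2=|1-\Tr(q_n)|$.
\item The map $a\mapsto(\theta_n(a)\otimes1)r_n$ is then a unital $*$-homomorphism into $r_nPr_n$.
\item Conjugate by a partial isometry $v_n$ with $v_n^*v_n=r_n$, $v_nv_n^*=p$ and $\|v_n-r_n\|_2\le2\|p-r_n\|_2$. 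This moves the homomorphism into $pPp\cong M$ at an $L^2$-cost of $O(\omega(\varepsilon)+\varepsilon)$.
\end{enumerate}
With $\psi_n:=\widehat\theta_n\circ E_n$ normal, unital, completely positive and exactly trace-preserving, the rest of your plan goes through as you wrote it. That covers compactness, normality and $L^2$-contractivity of the limit, the approximation through $E_m(x)$ together with Corollary~\ref{cor:quasi-isometry-lipschitz}, and the multiplicativity estimate.
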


\begin{proof}
Fix the standard semifinite factor
$
   P_0:=M\overline\otimes B(\ell^2)
$
with its canonical trace, and let
$
   p_0:=1\otimes e_{11}\in P_0.
$
We also set
\[
   P:=P_0\overline\otimes B(\ell^2),
   \qquad
   p:=p_0\otimes e_{11}\in P.
\]
Then $P$ is a semifinite factor and $pPp\cong M$ with $\Tr(p)=1$.

For every $n$, apply Theorem~\ref{thm:matrix-stability-reduction} to the restriction
\[
   \varphi|_{(A_n)_{\leq1}}:(A_n)_{\leq1}\to M_{\leq1}.
\]
Since every finite amplification corner of $M$ embeds, trace-preservingly, into $P_0$, we may realize the output inside $P_0$: there exist a finite projection
$
   q_n\in P_0
$
and a unital $*$-homomorphism
$
   \theta_n:A_n\to q_nP_0q_n
$
such that, after identifying $M$ with $p_0P_0p_0$, one has
\[
   \sup_{a\in (A_n)_{\leq1}}\|\varphi(a)-\theta_n(a)\|_{2,\Tr}
   \leq \omega(\varepsilon),
\]
where $\omega$ is the modulus from Theorem~\ref{thm:matrix-stability-reduction}.

Put
$
   \beta(\varepsilon):=\omega(\varepsilon)+\varepsilon.
$
Since $1\in A_n$ and $\varphi$ is $\varepsilon$-unital,
\[
   \|q_n-p_0\|_{2,\Tr}
   \leq
   \|q_n-\varphi(1)\|_{2,\Tr}+\|\varphi(1)-p_0\|_{2,\Tr}
   \leq \beta(\varepsilon).
\]
Hence, if
$
   t_n:=\Tr(q_n),
$
then
$
   |t_n-1|\leq \beta(\varepsilon)^2.
$

Let
$
   D_n:=\bigl(\theta_n(A_n)\otimes 1\bigr)'\cap (q_n\otimes 1)P(q_n\otimes 1).
$
Since $P$ is a semifinite factor and $\theta_n(A_n)$ is a full matrix algebra, $D_n$ is a semifinite factor. Moreover,
$
   q_n\otimes e_{11}\in D_n
$
has trace $t_n$. Choose a projection
$
   r_n\in D_n
$
with
$
   \Tr(r_n)=1
$
and
\[
   \|r_n-q_n\otimes e_{11}\|_{2,\Tr}^2=|1-t_n|.
\]
Indeed, if $t_n\geq 1$, choose $r_n\leq q_n\otimes e_{11}$ of trace $1$, and if $t_n\leq 1$, enlarge $q_n\otimes e_{11}$ by an orthogonal projection in $q_n\otimes (1-e_{11})$ of trace $1-t_n$.

Because $r_n$ commutes with $\theta_n(A_n)\otimes 1$, the map
\[
   \widetilde\theta_n:A_n\to r_nPr_n,
   \qquad
   \widetilde\theta_n(a):=(\theta_n(a)\otimes 1)r_n,
\]
is a unital $*$-homomorphism. For $a\in (A_n)_{\leq 1}$,
\[
\begin{aligned}
   \|\widetilde\theta_n(a)-\theta_n(a)\otimes e_{11}\|_{2,\Tr}
   &=\|(\theta_n(a)\otimes 1)\bigl(r_n-q_n\otimes e_{11}\bigr)\|_{2,\Tr} \\
   &\leq \|r_n-q_n\otimes e_{11}\|_{2,\Tr}
   \leq \beta(\varepsilon).
\end{aligned}
\]
Also,
\[
   \|r_n-p\|_{2,\Tr}
   \leq \|r_n-q_n\otimes e_{11}\|_{2,\Tr}+\|q_n-p_0\|_{2,\Tr}
   \leq 2\beta(\varepsilon).
\]

Since $\Tr(r_n)=\Tr(p)=1$, the projections $r_n$ and $p$ are Murray-von Neumann equivalent. By the standard polar-decomposition comparison for nearby equivalent projections, there exists a partial isometry
$
   v_n\in P
$
with
\[
   v_n^*v_n=r_n,
   \qquad
   v_nv_n^*=p,
\]
and
\[
   \|v_n-r_n\|_{2,\Tr}\leq 2\|p-r_n\|_{2,\Tr}
   \leq 4\beta(\varepsilon).
\]
Define
\[
   \widehat\theta_n:A_n\to pPp\cong M,
   \qquad
   \widehat\theta_n(a):=v_n\widetilde\theta_n(a)v_n^*.
\]
Then $\widehat\theta_n$ is a unital $*$-homomorphism. Since $\widetilde\theta_n(a)\in r_nPr_n$ and $\|\widetilde\theta_n(a)\|\leq 1$,
\[
\begin{aligned}
   \|\widehat\theta_n(a)-\widetilde\theta_n(a)\|_{2,\Tr}
   &\leq \|(v_n-r_n)\widetilde\theta_n(a)v_n^*\|_{2,\Tr}
      +\|r_n\widetilde\theta_n(a)(v_n^*-r_n)\|_{2,\Tr} \\
   &\leq 2\|v_n-r_n\|_{2,\Tr}
   \leq 8\beta(\varepsilon).
\end{aligned}
\]
Therefore, after identifying $\varphi(a)$ with $\varphi(a)\otimes e_{11}\in pPp$,
\[
\begin{aligned}
   \|\varphi(a)-\widehat\theta_n(a)\|_{2,\Tr}
   &\leq \|\varphi(a)-\theta_n(a)\otimes e_{11}\|_{2,\Tr}
      +\|\theta_n(a)\otimes e_{11}-\widetilde\theta_n(a)\|_{2,\Tr} \\
   &\qquad +\|\widetilde\theta_n(a)-\widehat\theta_n(a)\|_{2,\Tr} \\
   &\leq \omega(\varepsilon)+9\beta(\varepsilon).
\end{aligned}
\]
Set
$
   \alpha(\varepsilon):=\omega(\varepsilon)+9\beta(\varepsilon).
$
Then $\alpha(\varepsilon)\to 0$ as $\varepsilon\to 0$.

Define
$
   \psi_n:=\widehat\theta_n\circ E_n:\R\to M.
$
Then $\psi_n$ is normal, unital, trace-preserving and completely positive. Here $\widehat\theta_n$ is trace-preserving because it is a unital representation of a full matrix algebra into the II$_1$-factor $pPp\cong M$.

The space of unital completely positive maps $\R\to M$ is compact in the topology of pointwise ultraweak convergence.  Passing to a subnet, we may assume that $\psi_n$ converges pointwise ultraweakly to a unital completely positive map
$
   \psi:\R\to M.
$
Since $\tau_M\circ\psi_n=\tau_\R$ for every $n$, we get
\[
   \tau_M(\psi(x))=\tau_\R(x)
   \qquad (x\in\R).
\]
Thus $\psi$ is trace-preserving.

We record that $\psi$ is normal.  Let $0\leq x_i\uparrow x$ in $\R$.  Then
$
   \sup_i \psi(x_i)\leq \psi(x).
$
Let
$
   y:=\psi(x)-\sup_i\psi(x_i)\geq0.
$
Since $\psi$ is trace-preserving and $\tau_\R$ is normal,
\[
   \tau_M(y)=\tau_\R(x)-\sup_i\tau_\R(x_i)=0.
\]
The trace $\tau_M$ is faithful, hence $y=0$.  This proves normality.

Let $x\in\R_{\leq1}$.  Since $E_n(x)\in(A_n)_{\leq1}$ and $E_n(x)\to x$ in $\|\cdot\|_2$, Corollary~\ref{cor:quasi-isometry-lipschitz} gives
\[
   \|\varphi(x)-\varphi(E_n(x))\|_2
   \leq \|x-E_n(x)\|_2+\eta(\varepsilon).
\]
Hence
\[
\begin{aligned}
   \|\varphi(x)-\psi_n(x)\|_2
   &\leq \|\varphi(x)-\varphi(E_n(x))\|_2
      +\|\varphi(E_n(x))-\widehat\theta_n(E_n(x))\|_2  \\
   &\leq \|x-E_n(x)\|_2+\eta(\varepsilon)+\alpha(\varepsilon).
\end{aligned}
\]
Taking the limit inferior along the chosen subnet and using lower semicontinuity of the $2$-norm under ultraweak convergence gives
\[
   \|\varphi(x)-\psi(x)\|_2
   \leq \eta(\varepsilon)+\alpha(\varepsilon).
\]
The right-hand side is independent of $x$, and therefore
\[
   \sup_{x\in\R_{\leq1}}\|\varphi(x)-\psi(x)\|_2
   \leq \eta(\varepsilon)+\alpha(\varepsilon).
\]
Put
$
   \omega_1(\varepsilon):=\eta(\varepsilon)+\alpha(\varepsilon).
$

We now estimate the multiplicative defect of $\psi$.  Since $\psi$ is unital completely positive, it is contractive.  For $x,y\in\R_{\leq1}$,
\[
\begin{aligned}
   \|\psi(xy)-\psi(x)\psi(y)\|_2
   &\leq \|\psi(xy)-\varphi(xy)\|_2
        +\|\varphi(xy)-\varphi(x)\varphi(y)\|_2 \\
   &\quad +\|\varphi(x)\varphi(y)-\psi(x)\psi(y)\|_2.
\end{aligned}
\]
Since $\varphi$ takes values in $M_{\leq1}$ and $\psi$ is contractive, the last term is at most
\[
   \|\varphi(x)-\psi(x)\|_2+\|\varphi(y)-\psi(y)\|_2.
\]
Thus
$
   \|\psi(xy)-\psi(x)\psi(y)\|_2
   \leq \varepsilon+3\omega_1(\varepsilon).
$
The assertion follows with
$
   \omega_0(\varepsilon):=\max\{\omega_1(\varepsilon),\varepsilon+3\omega_1(\varepsilon)\}.
$
\end{proof}

We recall the Hilbert-module Stinespring construction in the tracial setting.  Let $\psi:\R\to M$ be a normal unital completely positive map.  Let $\mathcal H_\psi$ be the self-dual Hilbert right $M$-module obtained from $\R\odot M$ with respect to the $M$-valued inner product
\[
   \left\langle \sum_i x_i\otimes a_i,\sum_j y_j\otimes b_j\right\rangle_M
   =
   \sum_{i,j}a_i^*\psi(x_i^*y_j)b_j.
\]
Left multiplication on the first tensor factor gives a normal unital representation
$
   \pi_\psi:\R\to\mathcal L_M(\mathcal H_\psi),
$
and the adjointable map
\[
   V:M_M\to\mathcal H_\psi,
   \qquad
   V(a)=1\otimes a,
\]
is an isometry satisfying
$
   \psi(x)=V^*\pi_\psi(x)V.
$
Let
$
   p:=VV^*\in\mathcal L_M(\mathcal H_\psi).
$
We regard $M$ as the corner $p\mathcal L_M(\mathcal H_\psi)p$ via
$
   a\mapsto VaV^*.
$

The von Neumann algebra $\mathcal L_M(\mathcal H_\psi)$ is a semifinite amplification of $M$ and carries the canonical semifinite trace $\Tr_M$, normalized so that
$
   \Tr_M(p)=1.
$
We use the corresponding $2$-norm on finite-trace corners.

\begin{lemma}
\label{L.stinespring.projection}
Let $\psi:\R\to M$ be normal, unital and trace-preserving completely positive, and suppose that
\[
   \sup_{x,y\in\R_{\leq1}}
   \|\psi(xy)-\psi(x)\psi(y)\|_2\leq \delta.
\]
Let
$\psi(x)=V^*\pi(x)V$ and $p=VV^*$.
Then for every unitary $u\in\Unitary(\R)$,
$
   \|(1-p)\pi(u)p\|_{2,\Tr_M}^2
   \leq \delta
$
and
$
   \|p\pi(u)(1-p)\|_{2,\Tr_M}^2
   \leq \delta.
$
Consequently
$$
   \sup_{u\in\Unitary(\R)}\|[p,\pi(u)]\|_{2,\Tr_M}
   \leq 2\delta^{1/2}.
$$
\end{lemma}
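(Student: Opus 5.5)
The plan is to compute $\|(1-p)\pi(u)p\|_{2,\Tr_M}^2$ exactly in terms of the multiplicative defect of $\psi$ at the pair $(u^*,u)$, and then obtain the commutator bound from the triangle inequality.

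\textbf{Step 1: an exact formula.} Since $p=VV^*$, $V^*V=1_M$, and $\Tr_M$ restricted to $p\mathcal L_M(\mathcal H_\psi)p$ corresponds to $\tau_M$ under $a\mapsto VaV^*$, we have $\Tr_M(VaV^*)=\tau_M(a)$. Hence
\[
\|(1-p)\pi(u)p\|_{2,\Tr_M}^2
=\Tr_M\bigl(p\pi(u)^*(1-p)\pi(u)p\bigr)
=\Tr_M(p)-\Tr_M\bigl(p\pi(u^*)p\pi(u)p\bigr).
\]
Here we used $\pi(u)^*\pi(u)=1$. Writing $p\pi(x)p=V\psi(x)V^*$, the second term equals $\tau_M(\psi(u^*)\psi(u))$. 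Since $\psi$ is trace-preserving and unital, $\Tr_M(p)=1=\tau_M(\psi(u^*u))$. Therefore
\[
\|(1-p)\pi(u)p\|_{2,\Tr_M}^2=\tau_M\bigl(\psi(u^*u)-\psi(u^*)\psi(u)\bigr).
\]

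\textbf{Step 2: bound by the defect.} The right-hand side is at most $\|\psi(u^*u)-\psi(u^*)\psi(u)\|_{1}\le\|\psi(u^*u)-\psi(u^*)\psi(u)\|_2\le\delta$, by Cauchy--Schwarz with respect to the probability trace $\tau_M$ and the hypothesis applied to $x=u^*$, $y=u$, both of which lie in $\R_{\le1}$. It is also automatically nonnegative, because it equals a squared norm (equivalently, by Kadison--Schwarz). The second estimate is the first one applied to $u^*$: one has $\|p\pi(u)(1-p)\|_{2,\Tr_M}=\|((1-p)\pi(u^*)p)^*\|_{2,\Tr_M}$, and the $2$-norm is $*$-invariant for the trace $\Tr_M$ on these finite-trace elements. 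One should check here that $(1-p)\pi(u)p$ has finite $\Tr_M$-$2$-norm; this holds because it equals $x p$ with $x$ bounded and $\Tr_M(p)=1$.

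\textbf{Step 3: the commutator.} Use the identity $[p,\pi(u)]=p\pi(u)(1-p)-(1-p)\pi(u)p$. The triangle inequality then gives $\|[p,\pi(u)]\|_{2,\Tr_M}\le2\delta^{1/2}$.

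\textbf{Main obstacle.} The only delicate point is the trace bookkeeping in Step 1. One must be sure that the canonical trace $\Tr_M$ on $\mathcal L_M(\mathcal H_\psi)$ restricts on the corner $p\,\cdot\,p$ exactly to $\tau_M$ transported through $V$. This is how $\Tr_M$ was normalized, with $\Tr_M(p)=1$ and $M\cong p\mathcal L_M(\mathcal H_\psi)p$ via $a\mapsto VaV^*$. Granting this, everything else is algebra.
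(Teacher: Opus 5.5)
Your proposal is correct and is essentially the paper's proof. Both rest on the identity $\|(1-p)\pi(u)p\|_{2,\Tr_M}^2=\tau_M\bigl(\psi(u^*u)-\psi(u)^*\psi(u)\bigr)$, bounded by the $2$-norm of the multiplicative defect at $(u^*,u)$, with the second estimate obtained by passing to $u^*$. The only differences are cosmetic: you get nonnegativity from the squared-norm identity rather than from Kadison's inequality, and you bound the commutator by the triangle inequality, whereas the paper uses the $L^2$-orthogonality of the two off-diagonal corners (which would even give $\sqrt{2}\,\delta^{1/2}$).
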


\begin{proof}
For a unitary $u\in\Unitary(\R)$,
\[
\begin{aligned}
   \|(1-p)\pi(u)p\|_{2,\Tr_M}^2
   &=\Tr_M\bigl(p\pi(u)^*(1-p)\pi(u)p\bigr)  \\
   &=\Tr_M\bigl(p\pi(u^*u)p-p\pi(u)^*p\pi(u)p\bigr).
\end{aligned}
\]
Under the identification $p\mathcal L_M(\mathcal H_\psi)p\cong M$, this is
$
   \tau_M\bigl(\psi(1)-\psi(u)^*\psi(u)\bigr).
$
Since $\psi$ is unital,
$
   \psi(1)-\psi(u)^*\psi(u)
   =
   \psi(u^*u)-\psi(u)^*\psi(u).
$
The latter element is positive by Kadison's inequality and has $2$-norm at most $\delta$ by assumption.  Hence
\[
   0\leq \tau_M\bigl(\psi(1)-\psi(u)^*\psi(u)\bigr)
   \leq \|\psi(1)-\psi(u)^*\psi(u)\|_2
   \leq \delta.
\]
This proves the first estimate.  The second follows by applying the first to $u^*$.  Finally,
$
   [p,\pi(u)]=p\pi(u)(1-p)-(1-p)\pi(u)p,
$
and the two summands are orthogonal in $L^2(\Tr_M)$.  Thus the displayed estimate follows.
\end{proof}

The next lemma replaces an almost invariant projection by a nearby invariant projection.  The proof uses only Hilbert-space convexity.

\begin{lemma}
\label{L.average.commutant}
There exists a function
\[
   \kappa:[0,1]\to[0,\infty),
   \qquad
   \kappa(s)\to0 \quad (s\to0),
\]
with the following property.  Let $P$ be a semifinite von Neumann algebra with faithful normal semifinite trace $\Tr$, let
$
   \pi:\R\to P
$
be a normal unital representation, and let $p\in P$ be a projection with $\Tr(p)<\infty$.  Suppose that
\[
   \sup_{u\in\Unitary(\R)}\|[p,\pi(u)]\|_{2,\Tr}\leq s.
\]
Then there exists a projection
$
   q\in \pi(\R)'\cap P
$
such that
$
   \|p-q\|_{2,\Tr}\leq \kappa(s).
$
Moreover
$
   |\Tr(q)-\Tr(p)|\leq \kappa(s)^2.
$
\end{lemma}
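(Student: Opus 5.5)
We will construct $q$ by averaging $p$ over the unitary groups of the matrix subalgebras $A_n\subset\R$, passing to a weak$^*$ limit, and then taking a spectral projection. This gives $\kappa(s)=2s$, independently of $\Tr(p)$.

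\emph{Step 1 (averaging).} For each $n$, let $du$ be the Haar probability measure on the compact group $\Unitary(A_n)$ and set
\[
   x_n:=\int_{\Unitary(A_n)}\pi(u)p\pi(u)^*\,du .
\]
This integral converges ultraweakly. Then $0\le x_n\le 1$ and $\Tr(x_n)=\Tr(p)<\infty$ by normality of $\Tr$. By invariance of Haar measure, $x_n$ commutes with $\pi(A_n)$. Since
\[
   \|\pi(u)p\pi(u)^*-p\|_{2,\Tr}=\|[\pi(u),p]\|_{2,\Tr}\le s
\]
for every $u$, convexity of the $2$-norm gives $\|x_n-p\|_{2,\Tr}\le s$. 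Here we use lower semicontinuity of $\|\cdot\|_{2,\Tr}$ under ultraweak limits of Riemann-type averages; equivalently, one can test against elements of $L^2(P,\Tr)$.

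\emph{Step 2 (limit).} The $x_n$ lie in the unit ball of $P$, so we pass to a weak$^*$ cluster point $x$ along a subnet. Then $0\le x\le 1$. For $m\ge n$ the element $x_m$ commutes with $\pi(A_n)$, so $x$ commutes with $\pi(\bigcup_n A_n)$. By normality of $\pi$ and $\|\cdot\|_2$-density of $\bigcup_n A_n$ in $\R$ (Kaplansky), we get $x\in\pi(\R)'\cap P$. Lower semicontinuity of $\Tr$ and of the $2$-norm then give
\[
   \Tr(x)\le\Tr(p)<\infty,\qquad \|x-p\|_{2,\Tr}\le s .
\]
For the second inequality, $x-p$ is the weak$^*$ limit of $x_n-p$, and $y\mapsto\Tr(y^*y)$ is normal and lower semicontinuous.

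\emph{Step 3 (rounding).} Put $q:=1_{[1/2,1]}(x)\in\pi(\R)'\cap P$. Since $q\le 2x$, we have $\Tr(q)<\infty$. The key claim is that $q$ is at least as close to $x$ as any finite-trace projection:
\[
   \|x-q\|_{2,\Tr}\le\|x-p\|_{2,\Tr}.
\]
To see this, write
\[
   \|x-e\|_2^2=\Tr(x^2)+\Tr\bigl(e(1-2x)e\bigr)
\]
for any finite-trace projection $e$. Set $a:=1-2x$, so that $a_-=(2x-1)_+\le 2x$ is trace class. Then
\[
   \Tr(eae)\ge-\Tr(ea_-e)\ge-\Tr(a_-)=\Tr(qaq),
\]
and the claim follows. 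Hence
\[
   \|p-q\|_{2,\Tr}\le\|p-x\|_{2,\Tr}+\|x-q\|_{2,\Tr}\le 2s .
\]

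\emph{Step 4 (trace).} Since
\[
   \Tr(q)-\Tr(p)=\Tr\bigl(q(1-p)\bigr)-\Tr\bigl(p(1-q)\bigr)
\]
and
\[
   \|p-q\|_2^2=\Tr\bigl(q(1-p)\bigr)+\Tr\bigl(p(1-q)\bigr),
\]
with both terms nonnegative, we get $|\Tr(q)-\Tr(p)|\le\|p-q\|_2^2\le\kappa(s)^2$.

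The main obstacle is the bookkeeping with a semifinite trace in Steps 1–3. One has to ensure that $x$, $q$ and all differences have finite trace, and to justify the lower semicontinuity used in passing to the weak$^*$ limit. The finiteness $\Tr(p)<\infty$ handles all of this, and no bound on $\Tr(p)$ enters the estimates.
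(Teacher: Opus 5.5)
Your proof is correct, but it differs from the paper's in both of its main steps.

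\emph{Averaging.} The paper never uses the matrix subalgebras. It takes the closed convex hull $C\subset L^2(P,\Tr)$ of the whole orbit $\{\pi(u)p\pi(u)^*:u\in\Unitary(\R)\}$, which lies in the $L^2$-ball of radius $s$ about $p$. It then lets $a$ be the unique element of $C$ of minimal norm. Since each $\Ad(\pi(u))$ is a linear $L^2$-isometry preserving $C$, uniqueness forces $a\in\pi(\R)'\cap P$. This Hilbert-space fixed-point argument needs no hyperfiniteness of $\R$, no Haar measure and no weak$^*$ compactness. It also needs no lower semicontinuity of $\Tr$ or $\|\cdot\|_{2,\Tr}$, only that $L^2$-limits of positive contractions are positive contractions. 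It therefore works for any von Neumann algebra in place of $\R$. Your Haar averaging over $\Unitary(A_n)$ followed by a weak$^*$ cluster point is the more classical amenability-type route. It costs the semifinite bookkeeping you list, and you handle all of that correctly.

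\emph{Rounding.} The paper bounds $\|a-a^2\|_{2,\Tr}\le 3s$ and uses $|t-1_{[1/2,1]}(t)|\le 2|t-t^2|$, arriving at $\kappa(s)=7s$. You instead show that for every finite-trace projection $e$,
\[
\|x-e\|_{2,\Tr}^2=\Tr(x^2)+\Tr\bigl(e(1-2x)e\bigr)\ge\Tr(x^2)-\Tr\bigl((2x-1)_+\bigr)=\|x-q\|_{2,\Tr}^2 .
\]
So $q=1_{[1/2,1]}(x)$ is a best $L^2$-approximant of $x$ by projections, and this gives the sharper $\kappa(s)=2s$. That step could be inserted directly into the paper's proof to improve its constant.
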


\begin{proof}
Let $\alpha_u=\Ad(\pi(u))$.  Consider the closed convex hull, in the Hilbert space $L^2(P,\Tr)$, of the orbit
$
   \{\alpha_u(p):u\in\Unitary(\R)\}.
$
Call this closed convex set $C$.  Since
\[
   \|p-\alpha_u(p)\|_{2,\Tr}
   =
   \|[p,\pi(u)]\|_{2,\Tr}
   \leq s,
\]
the whole set $C$ is contained in the closed $L^2$-ball of radius $s$ around $p$.

Let $a\in C$ be the unique element of minimal $L^2$-norm.  Since $C$ is invariant under every $\alpha_u$ and each $\alpha_u$ acts isometrically on $L^2(P,\Tr)$, uniqueness of the minimal-norm element gives
\[
   \alpha_u(a)=a
   \qquad (u\in\Unitary(\R)).
\]
Moreover $a$ is an $L^2$-limit of convex combinations of positive contractions.  Hence $a$ is represented by a positive contraction in $P$.  Therefore
\[
   a\in \pi(\R)'\cap P,
   \qquad
   0\leq a\leq1,
\]
and
$
   \|p-a\|_{2,\Tr}\leq s.
$

We estimate $a-a^2$.  Since $p=p^2$ and $0\leq a\leq1$,
\[
\begin{aligned}
   \|a-a^2\|_{2,\Tr}
   &\leq
   \|a-p\|_{2,\Tr}
   +\|p-pa\|_{2,\Tr}
   +\|pa-a^2\|_{2,\Tr} \\
   &\leq 3\|a-p\|_{2,\Tr}
   \leq 3s .
\end{aligned}
\]
Let
$
   q:=1_{[1/2,1]}(a)\in\pi(\R)'\cap P.
$
Since $a\in L^2(P,\Tr)$ and $a\geq \frac12 q$, the projection $q$ has finite trace.  The scalar inequality
\[
   |t-1_{[1/2,1]}(t)|\leq 2|t-t^2|,
   \qquad 0\leq t\leq1,
\]
gives by functional calculus
$
   \|a-q\|_{2,\Tr}\leq2\|a-a^2\|_{2,\Tr}.
$
Hence
\[
   \|p-q\|_{2,\Tr}
   \leq
   \|p-a\|_{2,\Tr}+\|a-q\|_{2,\Tr}
   \leq 7s.
\]
Thus we may take $\kappa(s)=7s$ for $0\leq s\leq1$.

Finally, for finite projections $p$ and $q$,
\[
\begin{aligned}
   |\Tr(q)-\Tr(p)|
   &=
   \left|\Tr(q(1-p))-\Tr(p(1-q))\right|  \\
   &\leq
   \Tr(q(1-p))+\Tr(p(1-q))
   =
   \|p-q\|_{2,\Tr}^2.
\end{aligned}
\]
This proves the last assertion.
\end{proof}

The following elementary estimate will be used to compare the original compression with the nearby reducing compression.

\begin{lemma}
\label{L.change.projection}
Let $P$ be a semifinite von Neumann algebra with trace $\Tr$, let $p,q\in P$ be finite projections, and let $T\in P$ satisfy $\|T\|\leq1$.  Then
\[
   \|qTq-pTp\|_{2,\Tr}
   \leq 2\|p-q\|_{2,\Tr}.
\]
\end{lemma}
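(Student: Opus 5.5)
The plan is a telescoping decomposition followed by two applications of the Cauchy--Schwarz-type bound \(\|xy\|_{2,\Tr}\le \|x\|\,\|y\|_{2,\Tr}\) and \(\|xy\|_{2,\Tr}\le \|x\|_{2,\Tr}\|y\|\), valid whenever one factor lies in \(L^2(P,\Tr)\). Since \(p\) and \(q\) are finite projections, all terms below lie in \(L^2(P,\Tr)\), and the norms make sense.

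We write
\[
qTq-pTp=(q-p)Tq+pT(q-p).
\]
This identity is checked by expanding: \(qTq-pTq+pTq-pTp\).

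Then we estimate each summand separately. For the first,
\[
\|(q-p)Tq\|_{2,\Tr}\le \|q-p\|_{2,\Tr}\,\|Tq\|\le \|q-p\|_{2,\Tr},
\]
using \(\|T\|\le 1\) and \(\|q\|\le 1\). For the second,
\[
\|pT(q-p)\|_{2,\Tr}\le \|pT\|\,\|q-p\|_{2,\Tr}\le \|q-p\|_{2,\Tr},
\]
using left-multiplication invariance: \(\|ax\|_{2,\Tr}^2=\Tr(x^*a^*ax)\le \|a\|^2\Tr(x^*x)\). For right multiplication we use traciality, \(\|xa\|_{2,\Tr}=\|a^*x^*\|_{2,\Tr}\). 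This is legitimate for \(x\in L^2\) in a semifinite algebra, since \(\Tr(xx^*)=\Tr(x^*x)\) for \(x\) in the Hilbert--Schmidt ideal.

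Adding the two bounds with the triangle inequality gives the factor \(2\).

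There is no real obstacle here. The only point needing care is that the \(2\)-norm inequalities with an operator-norm factor hold on both sides for a semifinite trace. This follows from \(\Tr(x^*x)=\Tr(xx^*)\) for \(x\) in \(L^2(P,\Tr)\), and \(q-p\) is such an element because \(p\) and \(q\) have finite trace.
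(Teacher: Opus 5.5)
Your proof is correct and follows the paper's argument exactly: the same decomposition $qTq-pTp=(q-p)Tq+pT(q-p)$, followed by the estimates $\|xa\|_{2,\Tr},\|ax\|_{2,\Tr}\le\|a\|\,\|x\|_{2,\Tr}$ applied to each summand. Your extra justification of the right-multiplication bound via $\Tr(x^*x)=\Tr(xx^*)$ is fine; in fact this identity holds for every $x\in P$, with values in $[0,\infty]$, by the definition of a trace, so you do not need to restrict to $x\in L^2$.
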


\begin{proof}
We write
$
   qTq-pTp=(q-p)Tq+pT(q-p).
$
Therefore
\[
   \|qTq-pTp\|_{2,\Tr}
   \leq \|(q-p)Tq\|_{2,\Tr}+\|pT(q-p)\|_{2,\Tr}
   \leq 2\|p-q\|_{2,\Tr}.
\]
\end{proof}

We are now ready to prove our main result.

\begin{theorem}
\label{T.amplified.stability}
There exist functions
$\Omega:[0,1]\to[0,\infty)$ and $\Gamma:[0,1]\to[0,\infty)$ with $\Omega(\varepsilon)\to0$ and $\Gamma(\varepsilon)\to0$ as $\varepsilon\to0$, such that the following holds.
Let $M$ be a II$_1$-factor and let
$
   \varphi:\R_{\leq1}\to M_{\leq1}
$
be an $L^2$-$\varepsilon$-$*$-homomorphism and $\varepsilon$-unital.  Then there exist
\begin{itemize}[leftmargin=2em]
\item a semifinite amplification $P$ of $M$;
\item a trace-one projection $p\in P$ with $pPp\cong M$;
\item a projection $q\in P$ with
$
   t:=\Tr_M(q)
   \in[1-\Gamma(\varepsilon),1+\Gamma(\varepsilon)];
$
\item and a unital normal $*$-homomorphism
$
   \Theta_t:\R\to qPq\cong M^t;
$
\end{itemize}
such that, after identifying $M$ with $pPp$, one has
\[
   \sup_{x\in\R_{\leq1}}
   \|\varphi(x)-\Theta_t(x)\|_{2,\Tr_M}
   \leq \Omega(\varepsilon).
\]
\end{theorem}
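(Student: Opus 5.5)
The proof assembles the three preceding lemmas. First I apply Lemma~\ref{L.ucp.limit} to $\varphi$. This gives a normal unital trace-preserving completely positive map $\psi\colon\R\to M$ with $\sup_{x\in\R_{\le1}}\|\varphi(x)-\psi(x)\|_2\le\omega_0(\varepsilon)$ and multiplicative defect at most $\omega_0(\varepsilon)$ on $\R_{\le1}$.

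Next I form the Hilbert-module Stinespring dilation $\psi(x)=V^*\pi_\psi(x)V$. I take $P:=\mathcal L_M(\mathcal H_\psi)$ with its trace $\Tr_M$, the projection $p:=VV^*$ with $\Tr_M(p)=1$, and identify $M\cong pPp$ via $a\mapsto VaV^*$. Under this identification $\psi(x)$ corresponds to $p\pi_\psi(x)p$. Lemma~\ref{L.stinespring.projection}, applied with $\delta=\omega_0(\varepsilon)$, gives $\sup_{u}\|[p,\pi_\psi(u)]\|_{2,\Tr_M}\le2\omega_0(\varepsilon)^{1/2}=:s$. Lemma~\ref{L.average.commutant} then yields a projection $q\in\pi_\psi(\R)'\cap P$ with $\|p-q\|_{2,\Tr_M}\le7s$ and $|\Tr_M(q)-1|\le49s^2$. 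So $t:=\Tr_M(q)$ satisfies the required bound with $\Gamma(\varepsilon):=49\cdot4\,\omega_0(\varepsilon)$. For small $\varepsilon$ we have $t>0$, so $q\neq0$. If $q$ had to be nonzero for all $\varepsilon\le1$, I would simply truncate $\Gamma$ and $\Omega$ at large constants, where the statement is trivial.

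Since $q$ commutes with $\pi_\psi(\R)$, the map $\Theta_t(x):=q\pi_\psi(x)q=\pi_\psi(x)q$ is a unital normal $*$-homomorphism $\R\to qPq$. Also $qPq\cong M^t$, because $P$ is a semifinite amplification of the factor $M$. I would check in passing that $P$ is a factor, or at least that $q$ is finite in $P$, with $qPq$ identified as an amplification via the trace. For $x\in\R_{\le1}$, Lemma~\ref{L.change.projection} with $T=\pi_\psi(x)$ gives
\[
\|\Theta_t(x)-\psi(x)\|_{2,\Tr_M}=\|q\pi_\psi(x)q-p\pi_\psi(x)p\|_{2,\Tr_M}\le2\|p-q\|_{2,\Tr_M}\le14s.
\]
By the triangle inequality, $\sup_x\|\varphi(x)-\Theta_t(x)\|_{2,\Tr_M}\le\omega_0(\varepsilon)+28\,\omega_0(\varepsilon)^{1/2}=:\Omega(\varepsilon)$.

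The main obstacle is bookkeeping about the ambient algebra, not the estimates. The Stinespring identification must send $\psi(x)$ exactly to $p\pi_\psi(x)p$ and be trace-preserving on $pPp$, so that all $2$-norms are computed consistently with $\Tr_M$. The normalization $\Tr_M(p)=1$ fixed before Lemma~\ref{L.stinespring.projection} should handle this. The other issue is that the commutant condition in Lemma~\ref{L.average.commutant} is stated for unitaries only. This suffices, since unitaries span $\R$ and $q$ commuting with $\pi_\psi(\mathcal U(\R))$ implies $q\in\pi_\psi(\R)'$.
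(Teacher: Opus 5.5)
Your proposal is correct and follows the paper's proof essentially step for step. It uses Lemma~\ref{L.ucp.limit}, the Hilbert-module Stinespring dilation, Lemma~\ref{L.stinespring.projection}, Lemma~\ref{L.average.commutant} with $\kappa(s)=7s$, and Lemma~\ref{L.change.projection}, giving $\Omega=\omega_0+28\,\omega_0^{1/2}$ and $\Gamma=196\,\omega_0$ for small $\varepsilon$; for large $\varepsilon$ the paper spells out your ``trivial'' case by taking a unital copy of $\R$ inside a trace-one corner such as $1\otimes e_{22}$, which is what guarantees a nonzero $q$ and a genuine unital $\Theta_t$.
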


\begin{proof}
We first prove the assertion for all sufficiently small $\varepsilon$.  Apply Lemma~\ref{L.ucp.limit} to obtain a normal unital trace-preserving completely positive map
$
   \psi:\R\to M
$
satisfying
$
   \sup_{x\in\R_{\leq1}}\|\varphi(x)-\psi(x)\|_2
   \leq \omega_0(\varepsilon)
$
and
$
   \sup_{x,y\in\R_{\leq1}}
   \|\psi(xy)-\psi(x)\psi(y)\|_2
   \leq \omega_0(\varepsilon).
$
Let
$
   \psi(x)=V^*\pi(x)V
$
be the Hilbert-module Stinespring dilation described above.  Put
\[
   P:=\mathcal L_M(\mathcal H_\psi),
   \qquad
   p:=VV^*.
\]
Then $P$ is a semifinite amplification of $M$, $\Tr_M(p)=1$, and the corner $pPp$ is canonically identified with $M$.

By Lemma~\ref{L.stinespring.projection},
\[
   \sup_{u\in\Unitary(\R)}\|[p,\pi(u)]\|_{2,\Tr_M}
   \leq 2\omega_0(\varepsilon)^{1/2}.
\]
By Lemma~\ref{L.average.commutant}, there exists a projection
$
   q\in\pi(\R)'\cap P
$
such that
$
   \|p-q\|_{2,\Tr_M}
   \leq \kappa\bigl(2\omega_0(\varepsilon)^{1/2}\bigr).
$
Set
$
   \gamma(\varepsilon):=
   \kappa\bigl(2\omega_0(\varepsilon)^{1/2}\bigr).
$
Then
$
   |\Tr_M(q)-1|
   \leq \gamma(\varepsilon)^2.
$
For $\varepsilon$ sufficiently small, $\gamma(\varepsilon)^2<1$, so $t:=\Tr_M(q)>0$ and $qPq\cong M^t$.

Since $q$ commutes with $\pi(\R)$, the map
\[
   \Theta_t:\R\to qPq,
   \qquad
   \Theta_t(x):=q\pi(x)q,
\]
is a unital $*$-homomorphism into the corner $qPq$, whose unit is $q$.

We now compare $\Theta_t$ with $\varphi$.  Under the identification $M\cong pPp$, the element $\psi(x)$ corresponds to
$
   p\pi(x)p.
$
Hence, for $x\in\R_{\leq1}$,
\[
\begin{aligned}
   \|\varphi(x)-\Theta_t(x)\|_{2,\Tr_M}
   &\leq \|\varphi(x)-\psi(x)\|_2
      +\|p\pi(x)p-q\pi(x)q\|_{2,\Tr_M} \\
   &\leq \omega_0(\varepsilon)+2\|p-q\|_{2,\Tr_M} \\
   &\leq \omega_0(\varepsilon)+2\gamma(\varepsilon).
\end{aligned}
\]
Thus, for small $\varepsilon$, the theorem holds with
\[
   \Omega(\varepsilon):=\omega_0(\varepsilon)+2\gamma(\varepsilon),
   \qquad
   \Gamma(\varepsilon):=\gamma(\varepsilon)^2.
\]

It remains only to extend the statement to all $\varepsilon\in[0,1]$.  Choose $\varepsilon_0>0$ such that $\Gamma(\varepsilon)<1$ for $0<\varepsilon\leq\varepsilon_0$.  For $\varepsilon>\varepsilon_0$, enlarge $\Omega$ and $\Gamma$ on $[\varepsilon_0,1]$ if necessary.  Indeed, every II$_1$-factor contains a unital copy of $\R$.  Taking
\[
   P=M\otimes B(\ell^2),
   \qquad
   p=1\otimes e_{11},
   \qquad
   q=1\otimes e_{22},
\]
and any unital embedding $\R\hookrightarrow qPq\cong M$, one obtains the required conclusion with a uniform coarse bound.  This redefinition away from $0$ does not affect the limits
$\Omega(\varepsilon)\to0,$ and $\Gamma(\varepsilon)\to0$ as $\varepsilon\to0$.
\end{proof}

\begin{remark}
Tracking the moduli in the proof gives the following rough quantitative
bounds:
\[
\Gamma(\varepsilon)=O(\varepsilon^{1/24}),
\qquad
\Omega(\varepsilon)=O(\varepsilon^{1/48}).
\]
For this we need to combine the quantitative results of \cite{dCOT} with the estimates of this paper. No attempt is made here to optimize these exponents.
\end{remark}

\section{The hyperfinite II$_1$-factor is isolated}
\label{sec:applications}

In this section we record a consequence of Theorem~\ref{T.amplified.stability}
which shows that a sufficiently accurate approximately $*$-isomorphic copy of the hyperfinite
factor must again be hyperfinite. This can be interpreted as a form of isolation of the hyperfinite II$_1$-factor in a suitable space of II$_1$ factors.

\begin{definition}
Let \(M\) and \(N\) be tracial von Neumann algebras. A map
\(\varphi\colon M_{\leq 1}\to N_{\leq 1}\) is called \(\delta\)-surjective if
\[
   \sup_{y\in N_{\leq 1}}
   \inf_{x\in M_{\leq 1}}
   \|\varphi(x)-y\|_2
   \leq \delta .
\]
We say that $\varphi \colon M_{\leq 1}\to N_{\leq 1}$ is an $\delta$-$*$-isomorphism if it is a $\delta$-surjective, $\delta$-unital, $L^2$-$\delta$-$*$-homomorphism.
\end{definition}

We shall use the following elementary consequence of Theorem~\ref{T.amplified.stability}.

\begin{lemma}
\label{L.almost-surjective-gives-dense-homomorphic-image}
Let \(M\) be a \(\mathrm{II}_1\)-factor and let
$
   \varphi\colon \R_{\leq 1}\to M_{\leq 1}
$
be an \(\varepsilon\)-unital $L^2$-\(\varepsilon\)-\(*\)-homomorphism which is
\(\varepsilon\)-surjective. Assume that \(\Gamma(\varepsilon)<1\). Let
\[
   \Theta_t\colon \R\to qPq\cong M^t
\]
be the homomorphism obtained from Theorem~\ref{T.amplified.stability}, where
\(P\) is a semifinite amplification of \(M\), \(pPp\cong M\), \(\Tr(p)=1\),
\(\Theta_t(1)=q\), and \(t=\Tr(q)\in[1-\Gamma(\varepsilon),1+\Gamma(\varepsilon)]\).
Put
\[
   Q:=\Theta_t(\R)\subset qPq .
\]
Then \(Q_{\leq 1}\) is \(\alpha(\varepsilon)\)-dense in \((qPq)_{\leq 1}\), with
respect to the normalized trace on \(qPq\), where
\[
   \alpha(\varepsilon)
   :=
   \frac{3(\varepsilon+\Omega(\varepsilon))}
        {(1-\Gamma(\varepsilon))^{1/2}} .
\]
\end{lemma}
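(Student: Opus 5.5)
The plan is to approximate an arbitrary contraction of $qPq$ first by its compression to the corner $pPp\cong M$. That compression can be approximated by some $\varphi(x)$ using $\varepsilon$-surjectivity, and $\varphi(x)$ in turn by $\Theta_t(x)\in Q_{\le 1}$ using Theorem~\ref{T.amplified.stability}. All estimates are done in the unnormalized trace $\Tr$ on $P$, and only at the end do we pass to the normalized trace $\Tr/t$ on $qPq$.

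\emph{Step 1: $p$ and $q$ are close.} Since $\Theta_t(1)=q$ and $1\in\R_{\le 1}$, Theorem~\ref{T.amplified.stability} gives $\|\varphi(1)-q\|_{2,\Tr}\le\Omega(\varepsilon)$. Under the identification $M\cong pPp$ the unit of $M$ is $p$, so $\varepsilon$-unitality gives $\|\varphi(1)-p\|_{2,\Tr}\le\varepsilon$. Hence
\[
\|p-q\|_{2,\Tr}\le \varepsilon+\Omega(\varepsilon).
\]

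\emph{Step 2: approximation of a given contraction.} Fix $y\in(qPq)_{\le1}$ and put $y':=pyp\in(pPp)_{\le 1}\cong M_{\le 1}$. Since $\Tr(p)=1$, the $2$-norm of $M$ coincides with $\|\cdot\|_{2,\Tr}$ on $pPp$. By $\varepsilon$-surjectivity, for every $\eta>0$ there is $x\in\R_{\le1}$ with $\|\varphi(x)-y'\|_{2,\Tr}\le\varepsilon+\eta$. Theorem~\ref{T.amplified.stability} then gives
\[
\|\Theta_t(x)-y'\|_{2,\Tr}\le \varepsilon+\eta+\Omega(\varepsilon).
\]
Since $y=qyq$, Lemma~\ref{L.change.projection} applied with $T=y$ yields
\[
\|y-y'\|_{2,\Tr}=\|qyq-pyp\|_{2,\Tr}\le 2\|p-q\|_{2,\Tr}\le 2(\varepsilon+\Omega(\varepsilon)).
\]
Combining the two bounds,
\[
\|\Theta_t(x)-y\|_{2,\Tr}\le 3(\varepsilon+\Omega(\varepsilon))+\eta .
\]
Moreover $\Theta_t(x)\in Q_{\le 1}$, because a $*$-homomorphism is contractive.

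\emph{Step 3: normalization.} The normalized trace on $qPq$ is $\Tr/t$, so the normalized $2$-norm is $t^{-1/2}\|\cdot\|_{2,\Tr}$. Since $t\ge 1-\Gamma(\varepsilon)>0$, dividing the bound from Step 2 by $t^{1/2}\ge(1-\Gamma(\varepsilon))^{1/2}$ gives
\[
\|\Theta_t(x)-y\|_{2,\mathrm{norm}}\le\alpha(\varepsilon)+\eta\,(1-\Gamma(\varepsilon))^{-1/2}.
\]
As $\eta>0$ is arbitrary, the infimum of the distance from $y$ to $Q_{\le 1}$ is at most $\alpha(\varepsilon)$, which is the claimed density.

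The only point requiring care is Step 1: one must read the approximate unitality of $\varphi$ inside $pPp$, where the unit is $p$, in order to conclude that $p$ and $q$ are close. Everything else is a triangle inequality together with Lemma~\ref{L.change.projection}.
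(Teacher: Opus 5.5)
Your proposal is correct and follows the paper's proof essentially step for step: you bound $\|p-q\|_{2,\Tr}\le\varepsilon+\Omega(\varepsilon)$ through $\varphi(1)$, approximate $pyp$ by some $\varphi(x)$, control $\|qyq-pyp\|_{2,\Tr}$ with Lemma~\ref{L.change.projection}, and then divide by $t^{1/2}\ge(1-\Gamma(\varepsilon))^{1/2}$. Your $\eta$-slack is a small gain in care, since the paper directly picks $x$ with $\|pyp-\varphi(x)\|_{2,\Tr}\le\varepsilon$, whereas $\varepsilon$-surjectivity is defined only through an infimum.
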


\begin{proof}
We identify \(M\) with \(pPp\). Since \(\varphi\) is \(\varepsilon\)-unital and
\(\Theta_t(1)=q\), Theorem~\ref{T.amplified.stability} gives
\[
   \|p-q\|_{2,\Tr}
   \leq
   \|p-\varphi(1)\|_{2,\Tr}
   +
   \|\varphi(1)-\Theta_t(1)\|_{2,\Tr}
   \leq
   \varepsilon+\Omega(\varepsilon).
\]
Let \(y\in(qPq)_{\leq 1}\). Then \(pyp\in(pPp)_{\leq 1}\). By
\(\varepsilon\)-surjectivity of \(\varphi\), there exists \(x\in\R_{\leq 1}\)
such that
\[
   \|pyp-\varphi(x)\|_{2,\Tr}\leq\varepsilon .
\]
Using Lemma~\ref{L.change.projection}, we obtain
\[
\begin{aligned}
   \|y-\Theta_t(x)\|_{2,\Tr}
   &\leq
   \|qyq-pyp\|_{2,\Tr}
   +
   \|pyp-\varphi(x)\|_{2,\Tr}
   +
   \|\varphi(x)-\Theta_t(x)\|_{2,\Tr}       \\
   &\leq
   2\|p-q\|_{2,\Tr}+\varepsilon+\Omega(\varepsilon) \\
   &\leq
   3(\varepsilon+\Omega(\varepsilon)).
\end{aligned}
\]
Passing to the normalized trace on \(qPq\) divides the \(2\)-norm by
\(t^{1/2}\). Since \(t\geq1-\Gamma(\varepsilon)\), the asserted density
estimate follows.
\end{proof}

We next isolate a simple consequence of \(L^2\)-density for the relative
commutant.

\begin{lemma}
\label{L.relative-commutant-large-scalar-corner}
Let \(Q\subset N\) be an inclusion of \(\mathrm{II}_1\)-factors. Assume that
\(Q_{\leq 1}\) is \(\alpha\)-dense in \(N_{\leq 1}\), with \(0<\alpha<1/4\).
Let
$
   A:=Q'\cap N .
$
Put
\[
   \eta(\alpha)
   :=
   \frac{1-\sqrt{1-4\alpha^2}}{2}.
\]
Then there exists a central projection \(z\in Z(A)\) such that
\[
   \tau_N(z)\geq 1-\eta(\alpha)
   \qquad\text{and}\qquad
   Az=\mathbb C z .
\]
Consequently,
$
   (Qz)'\cap zNz=\mathbb C z .
$
Moreover, \(Qz\subset zNz\) is
$
\alpha \tau_N(z)^{-1/2}
$
-dense with respect to the normalized trace on \(zNz\).
\end{lemma}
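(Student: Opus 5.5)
The key observation is that elements of the relative commutant $A=Q'\cap N$ are almost orthogonal to $Q$ after subtracting their trace. Since $Q\subset N$ is an inclusion of II$_1$-factors, the trace-preserving conditional expectation $E_Q$ maps $A$ into $Z(Q)=\mathbb C$. Hence $E_Q(a)=\tau_N(a)1$ for every $a\in A$. This gives the following plan.

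Let $a\in A$ with $\|a\|\le1$, and write $a_0:=a-\tau_N(a)1$. Then $a_0\perp Q$ in $L^2(N)$, so
\[
\|a-y\|_2^2=\|a_0\|_2^2+\|\tau_N(a)-y\|_2^2\ge\|a_0\|_2^2 \qquad (y\in Q).
\]
Density gives some $y\in Q_{\le1}$ with $\|a-y\|_2\le\alpha$, hence $\|a-\tau_N(a)\|_2\le\alpha$ for every contraction $a\in A$.

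We apply this to projections $e\in A$. Writing $s=\tau_N(e)$, we have $\|e-s\|_2^2=s(1-s)$. Therefore $s(1-s)\le\alpha^2$, which forces $s\le\eta(\alpha)$ or $s\ge1-\eta(\alpha)$, since $\eta(\alpha)$ and $1-\eta(\alpha)$ are exactly the roots of $s(1-s)=\alpha^2$. The hypothesis $\alpha<1/4$ guarantees $\eta(\alpha)<1/2$, so the two ranges are disjoint.

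Next we produce the central projection $z$. The finite von Neumann algebra $A$ decomposes via $Z(A)$. First suppose $A$ has a diffuse part, say a nonzero projection $f\in A$ with $fAf$ having diffuse center or being of type II. Then there are projections in $A$ of every trace between $0$ and $\tau(f)$ below $f$. If $\tau(f)>\eta(\alpha)$, this contradicts the gap, because we can find a projection of trace in $(\eta(\alpha),\min(\tau f,1-\eta(\alpha)))$. A more careful argument uses the fact that trace values of projections in $A$ form a set containing intervals whenever $A$ is not atomic. So the non-atomic part has trace at most $\eta(\alpha)$.

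On the atomic part, each minimal projection $e$ has $\tau(e)\le\eta(\alpha)$ or $\ge1-\eta(\alpha)$. Sums of minimal projections are again projections in $A$. Two disjoint projections of trace at least $1-\eta>1/2$ cannot coexist, so at most one minimal projection is large.

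Summing small pieces is where care is needed. Order the small orthogonal minimal projections and take partial sums, adding the non-atomic part if present. The partial-sum traces increase in steps of size at most $\eta$, starting from $0$. If the total exceeded $\eta$, some partial sum would land in $(\eta,2\eta]\subset(\eta,1-\eta)$, since $2\eta<1-\eta$ when $\eta<1/3$, which holds for $\alpha<1/4$. That is impossible, so the total trace of the small part is at most $\eta$. Hence there is a large minimal projection $e_0$ with $\tau(e_0)\ge1-\eta$.

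Take $z$ to be the central support of $e_0$ in $A$. The type I$_k$ summand must have $k=1$, since otherwise there would be $k$ orthogonal equivalent projections each of trace at least $1-\eta$. So $Az=\mathbb C z$ with $z=e_0$, and $\tau(z)\ge1-\eta(\alpha)$. I expect this case analysis on $A$ to be the main obstacle.

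Finally, since $z\in Q'\cap N$, we have $(Qz)'\cap zNz=z(Q'\cap N)z=Az=\mathbb C z$, using the standard commutant-of-a-corner identity for $z\in Q'$.

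For density, take $y\in(zNz)_{\le1}$ and choose $x\in Q_{\le1}$ with $\|y-x\|_2\le\alpha$. Then $\|y-xz\|_2=\|z(y-x)z\|_2\le\alpha$, because $z$ commutes with $x$ and $y=zyz$. Here $xz\in(Qz)_{\le1}$. Normalizing the trace on $zNz$ divides the $2$-norm by $\tau_N(z)^{1/2}$, which gives the stated bound $\alpha\,\tau_N(z)^{-1/2}$.
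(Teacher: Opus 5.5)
Your proposal is correct. The first step is the same as in the paper: for $a\in A$ one has $E_Q(a)\in Q\cap Q'=\mathbb C1$, so every contraction in $A$ lies within $\alpha$ of its trace, and every projection of $A$ has trace outside $(\eta(\alpha),1-\eta(\alpha))$. The commutant identity $(Qz)'\cap zNz=z(Q'\cap N)z$ and the density estimate also match the paper.

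The difference is in how $z$ is produced. The paper applies the projection gap inside the center $Z(A)$ and asserts that there is a central atom of trace at least $1-\eta(\alpha)$. It then shows that the factor $Az$ is trivial: otherwise $Az$ contains a projection of normalized trace in $[1/3,2/3]$, whose trace would fall in the forbidden window because $\eta(\alpha)<1/4$.

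You instead decompose $A$ itself into its diffuse and atomic parts. You bound the diffuse part by $\eta(\alpha)$ using the intermediate-value property of the trace. Your partial-sum argument then does the remaining work: increments of size at most $\eta$ cannot jump over $(\eta,1-\eta)$ because $2\eta<1-\eta$. Faithfulness and normality of $\tau_N$ make the orthogonal family countable and its partial sums convergent. This forces a minimal projection $e_0$ of trace at least $1-\eta(\alpha)$, whose central support must be a $1\times 1$ block.

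Your partial-sum step is exactly the detail hidden in the paper's sentence ``otherwise one could construct a central projection whose trace lies strictly between $\eta(\alpha)$ and $1-\eta(\alpha)$'', so your version is more self-contained. The paper's route is shorter and never needs minimal projections of $A$. In exchange, it relies on every non-trivial finite factor having projections of intermediate normalized trace.
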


\begin{proof}
Let \(E_Q\colon N\to Q\) be the trace-preserving conditional expectation.
Since \(E_Q\) is the \(L^2\)-orthogonal projection onto \(L^2(Q)\), the
density assumption implies
\[
   \|x-E_Q(x)\|_2\leq\alpha
   \qquad (x\in N_{\leq 1}).
\]
If \(a\in A_{\leq 1}\), then \(E_Q(a)\in Q\cap Q'=\mathbb C1\). Hence every
contraction in \(A\) is within \(L^2\)-distance \(\alpha\) of a scalar.

In particular, if \(e\in A\) is a projection and \(s:=\tau_N(e)\), then
\(E_Q(e)=s1\), and therefore
\[
   s(1-s)
   =
   \|e-s1\|_2^2
   =
   \|e-E_Q(e)\|_2^2
   \leq
   \alpha^2 .
\]
Thus every projection \(e\in A\) satisfies
\[
   \tau_N(e)\leq \eta(\alpha)
   \quad\text{or}\quad
   \tau_N(e)\geq 1-\eta(\alpha).
\]

Applying this to the center \(Z(A)\), we get a central atom
\(z\in Z(A)\) with \(\tau_N(z)\geq1-\eta(\alpha)\). Otherwise one could
construct a central projection whose trace lies strictly between
\(\eta(\alpha)\) and \(1-\eta(\alpha)\), contradicting the preceding
projection gap.

It remains to show that \(Az=\mathbb C z\). Suppose not. Since \(Az\) is a
non-scalar finite factor, it contains a projection \(f\) whose normalized
trace in \(Az\) lies between \(1/3\) and \(2/3\). Hence
\[
   \frac{\tau_N(z)}{3}
   \leq
   \tau_N(f)
   \leq
   \frac{2\tau_N(z)}{3}.
\]
For \(\eta(\alpha)<1/4\), this gives
$
   \eta(\alpha)<\tau_N(f)<1-\eta(\alpha),
$
again contradicting the projection gap. Thus \(Az=\mathbb C z\).

Finally let \(y\in(zNz)_{\leq1}\). Choose \(x\in Q_{\leq1}\) with
\(\|y-x\|_{2,\tau_N}\leq\alpha\). Since \(z\in Q'\cap N\), one has
\(zx\in(Qz)_{\leq1}\), and
\[
   \|y-zx\|_{2,\tau_{zNz}}
   =
   \tau_N(z)^{-1/2}\|z(y-x)\|_{2,\tau_N}
   \leq
   \frac{\alpha}{\tau_N(z)^{1/2}} .
\]
This proves the density assertion.
\end{proof}

The following consequence of Popa's asymptotic orthogonalization theorem is
the only external subfactor-theoretic input needed below.

\begin{lemma}
\label{L.irreducible-dense-implies-finite-index}
Let \(Q\subset N\) be an irreducible inclusion of \(\mathrm{II}_1\)-factors.
Assume that \(Q_{\leq1}\) is \(\beta\)-dense in \(N_{\leq1}\), for some
\(\beta<1\). Then
$
   [N:Q]<\infty .
$
\end{lemma}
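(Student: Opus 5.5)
We argue by contradiction and reduce to Popa's asymptotic orthogonalization theorem. Suppose \([N:Q]=\infty\). Since \(Q\subset N\) is an irreducible inclusion of II\(_1\)-factors with infinite index, Popa's theorem provides, for every finite set \(F\subset N\) and every \(\delta>0\), a unitary \(w\in\Unitary(N)\) with \(\|E_Q(x^*wy)\|_2\le\delta\) for all \(x,y\in F\). In its simplest form: there is a unitary \(w\in N\) with \(\|E_Q(w)\|_2\) arbitrarily small. In fact there is a whole sequence \(w_k\) of unitaries with \(E_Q(w_k)\to0\) in \(\|\cdot\|_2\), even \(E_Q(x w_k y)\to0\) for all \(x,y\in N\). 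We take this as the external input.

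Next we translate density into a lower bound for the conditional expectation. As in the proof of Lemma~\ref{L.relative-commutant-large-scalar-corner}, \(E_Q\) is the \(L^2\)-orthogonal projection onto \(L^2(Q)\) and maps \(N_{\le1}\) into \(Q_{\le1}\). Hence \(\beta\)-density of \(Q_{\le1}\) in \(N_{\le1}\) gives
\[
\|x-E_Q(x)\|_2\le\beta \qquad (x\in N_{\le1}).
\]
For a unitary \(w\in N\), Pythagoras then yields
\[
\|E_Q(w)\|_2^2=\|w\|_2^2-\|w-E_Q(w)\|_2^2\ge 1-\beta^2>0.
\]

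Choosing \(w\) from Popa's theorem with \(\|E_Q(w)\|_2^2<1-\beta^2\) contradicts this bound. Therefore \([N:Q]<\infty\).

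The main obstacle is purely bibliographic: we must make sure that the cited form of Popa's result applies to irreducible inclusions of infinite index and produces a unitary in \(N\), not merely in some relative commutant or ultrapower, whose expectation onto \(Q\) is small. If only the ultrapower version is available, i.e.\ a unitary \(w\in N^\omega\) with \(E_{Q^\omega}(w)=0\), we would represent \(w\) by a sequence of unitaries \(w_k\in N\) with \(\|E_Q(w_k)\|_2\to0\) and pick \(k\) large. Irreducibility (\(Q'\cap N=\mathbb C\)) is exactly the hypothesis under which Popa's theorem guarantees such unitaries when the index is infinite, so nothing beyond the bound above is needed.
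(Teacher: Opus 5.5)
Your proof is correct and is essentially the paper's: the paper applies Popa's asymptotic orthogonalization in $N^\omega$ to $B=W^*(w)$ for a Haar unitary $w\in N$, lifts $uwu^*$ to unitaries $v_n\in N$ with $\|E_Q(v_n)\|_2\to0$, and contradicts density via $\operatorname{dist}_2(v_n,Q_{\le1})^2=1-\|E_Q(v_n)\|_2^2$. This is your fallback route and your Pythagoras bound; the Haar unitary is needed because Popa's orthogonality only gives $E_{Q^\omega}(uwu^*)=\tau(w)1$. Your primary non-ultrapower form is also legitimate: irreducibility together with $[N:Q]=\infty$ gives $N\not\prec_N Q$, and Popa's intertwining theorem then supplies unitaries $w_k\in N$ with $\|E_Q(xw_ky)\|_2\to0$.
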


\begin{proof}
Assume, towards a contradiction, that \([N:Q]=\infty\). Since \(Q'\cap N=\mathbb C\),
the subfactor \(Q\subset N\) has infinite index under every non-zero projection
in \(Q'\cap N\). By Popa's asymptotic orthogonalization theorem
\cite{Popa2019}, for every separable von Neumann subalgebra \(B\subset N^\omega\)
there exists a unitary \(u\in N^\omega\) such that \(uBu^*\) is orthogonal to
\(Q^\omega\).

Choose a Haar unitary \(w\in N\), and put \(B:=W^*(w)\subset N\subset N^\omega\).
Then there exists a unitary \(u\in N^\omega\) such that
$
   uBu^*\perp Q^\omega .
$
Set
$
   v:=uwu^*\in N^\omega .
$
Then \(v\) is a unitary, \(\tau_\omega(v)=0\), and orthogonality gives
$
   E_{Q^\omega}(v)=0 .
$
Represent \(v\) by unitaries \(v_n\in N\). Since conditional expectations pass
to ultrapowers,
\[
   \lim_{n\to\omega}\|E_Q(v_n)\|_2=0 .
\]
But \(E_Q\) is the \(L^2\)-orthogonal projection onto \(L^2(Q)\), and
\(E_Q(v_n)\in Q_{\leq1}\). Therefore
\[
   \operatorname{dist}_2(v_n,Q_{\leq1})^2
   =
   \|v_n-E_Q(v_n)\|_2^2
   =
   1-\|E_Q(v_n)\|_2^2
   \longrightarrow_\omega 1 .
\]
This contradicts \(\beta\)-density for \(\beta<1\). Hence \([N:Q]<\infty\).
\end{proof}

We can now prove the desired rigidity statement.

\begin{theorem}
\label{T.almost-surjective-R-map-forces-hyperfinite}
There exists \(\delta_0>0\) such that: If \(M\) is a
\(\mathrm{II}_1\)-factor and there exists a \(\delta_0\)-\(*\)-isomorphism
$
   \varphi\colon \R_{\leq1}\to M_{\leq1},
$
then
$
   \R\cong M .
$
\end{theorem}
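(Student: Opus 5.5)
Our plan is to chain the lemmas of this section. Choose \(\delta_0>0\) so small that \(\Gamma(\delta_0)<1/2\) and that the density constant \(\alpha(\delta_0)\) from Lemma~\ref{L.almost-surjective-gives-dense-homomorphic-image} satisfies \(\alpha(\delta_0)<1/4\), \(\eta(\alpha(\delta_0))<1/4\) and \(\alpha(\delta_0)(1-\eta(\alpha(\delta_0)))^{-1/2}<1\). This is possible since \(\Omega,\Gamma\to0\). Given a \(\delta_0\)-\(*\)-isomorphism \(\varphi\), apply Theorem~\ref{T.amplified.stability} to obtain \(\Theta_t\colon\R\to qPq\cong M^t\). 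Then \(\Theta_t\) is injective, since \(\R\) is simple and \(\Theta_t(1)=q\neq0\), so \(Q:=\Theta_t(\R)\cong\R\) is a II\(_1\)-subfactor of \(N:=qPq\). By Lemma~\ref{L.almost-surjective-gives-dense-homomorphic-image}, \(Q_{\le1}\) is \(\alpha\)-dense in \(N_{\le1}\) with \(\alpha=\alpha(\delta_0)<1/4\).

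Next apply Lemma~\ref{L.relative-commutant-large-scalar-corner}. It yields a central projection \(z\in Q'\cap N\) with \(\tau_N(z)\ge1-\eta(\alpha)\) and \((Qz)'\cap zNz=\bbC z\), and \(Qz\) is \(\beta\)-dense in \(zNz\) with \(\beta=\alpha\tau_N(z)^{-1/2}<1\). The map \(x\mapsto xz\) is a nonzero normal homomorphism of the factor \(Q\), hence an isomorphism, so \(Qz\cong\R\). Lemma~\ref{L.irreducible-dense-implies-finite-index} now gives \([zNz:Qz]<\infty\).

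We then conclude hyperfiniteness. A finite-index extension of \(\R\) is amenable: the Jones basic construction shows that \(zNz\) is injective relative to \(Qz\), and injectivity of \(Qz\) then gives injectivity of \(zNz\). Alternatively, \(zNz\) embeds into the basic construction \(\langle zNz,e_{Qz}\rangle\), which is an amplification of \(Qz\) by a finite factor and hence hyperfinite. By Connes' theorem, \(zNz\) is a separable injective II\(_1\)-factor (separable because it has finite index over the separable \(Qz\)), so \(zNz\cong\R\). Since \(zNz\) is a corner of \(N\cong M^t\), \(M\) is an amplification of \(\R\). Finally \(\R^s\cong\R\) for every \(s>0\), because the fundamental group of \(\R\) is \(\mathbb R_+^*\). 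Hence \(M\cong\R\).

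The main obstacle is bookkeeping the constants so that every hypothesis holds at once: \(\alpha<1/4\) for Lemma~\ref{L.relative-commutant-large-scalar-corner}, \(\beta<1\) after renormalizing to \(zNz\), and \(t>0\). A second point to check is the finite-index-implies-hyperfinite step. We would cite Popa or the standard result that amenability passes to finite-index extensions, rather than reprove it.
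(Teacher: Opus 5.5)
Your proposal is correct and follows the paper's proof essentially step by step. You make the same choice of $\delta_0$, then chain Theorem~\ref{T.amplified.stability} with Lemmas~\ref{L.almost-surjective-gives-dense-homomorphic-image}, \ref{L.relative-commutant-large-scalar-corner} and \ref{L.irreducible-dense-implies-finite-index}, and finish with amenability of finite-index extensions and Connes' theorem. The only differences are cosmetic: you apply Connes' theorem to the corner $zNz$ and then use that $\R^s\cong\R$, whereas the paper pushes hyperfiniteness up to $M$ before invoking Connes. Your extra justifications (injectivity of $\Theta_t$, $Qz\cong\R$, and separability of $zNz$ via finite index) make explicit points the paper leaves implicit.
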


\begin{proof}
Choose \(\delta_0>0\) so small that
$
   \Gamma(\delta_0)<1,
$
and, with
\[
   \alpha_0
   :=
   \frac{3(\delta_0+\Omega(\delta_0))}
        {(1-\Gamma(\delta_0))^{1/2}},
\]
one has
\[
   \alpha_0<\frac14,\qquad
   \eta(\alpha_0)<\frac14,\qquad
   \frac{\alpha_0}{(1-\eta(\alpha_0))^{1/2}}<1 .
\]
This is possible since \(\Omega(\varepsilon)\to0\) and
\(\Gamma(\varepsilon)\to0\) as \(\varepsilon\to0\).

Apply Theorem~\ref{T.amplified.stability} to \(\varphi\). We obtain a
semifinite amplification \(P\) of \(M\), finite projections \(p,q\in P\) with
$pPp\cong M,$ and $qPq\cong M^t$,
where \(t=\Tr(q)\in[1-\Gamma(\delta_0),1+\Gamma(\delta_0)]\), and a unital
normal \(*\)-homomorphism
$
   \Theta_t\colon \R\to qPq .
$
Put
\[
   N:=qPq,\qquad Q:=\Theta_t(\R)\subset N .
\]
By Lemma~\ref{L.almost-surjective-gives-dense-homomorphic-image},
\(Q_{\leq1}\) is \(\alpha_0\)-dense in \(N_{\leq1}\).

Apply Lemma~\ref{L.relative-commutant-large-scalar-corner} to the inclusion
\(Q\subset N\). We obtain a projection
$
   z\in Z(Q'\cap N)
$
such that
\[
   \tau_N(z)\geq1-\eta(\alpha_0),
   \qquad
   (Qz)'\cap zNz=\mathbb C z .
\]
Moreover, the irreducible inclusion
$
   Qz\subset zNz
$
is $\delta$-dense with
\[
   \delta=\frac{\alpha_0}{\tau_N(z)^{1/2}}
   \leq
   \frac{\alpha_0}{(1-\eta(\alpha_0))^{1/2}}
   <1.
\]

By Lemma~\ref{L.irreducible-dense-implies-finite-index}, the inclusion
$
   Qz\subset zNz
$
has finite Jones index. Since \(Qz\cong Q\cong \R\), the subfactor \(Qz\) is
hyperfinite. Finite-index extensions of hyperfinite \(\mathrm{II}_1\)-factors
are hyperfinite; equivalently, amenability is preserved under finite-index
extensions of subfactors \cite{PimsnerPopa1986,Popa1994}. Hence \(zNz\) is
hyperfinite.

Since \(N\) is a \(\mathrm{II}_1\)-factor and \(z\neq0\), the algebra \(N\) is
an amplification of the corner \(zNz\). Therefore \(N\) is hyperfinite. But
\(N\cong M^t\), and hyperfiniteness is invariant under amplification. Thus
\(M\) is hyperfinite, and therefore isomorphic to \(\R\) by \cite{Connes1976}.
\end{proof}

\section{Open problems}
\label{sec:open-problems}

We end this article with some open problems that we find interesting.

\begin{openproblem} Let $M$ be a non-injective II$_1$-factor or for that matter $M=L(\Gamma)$ for a non-amenable i.c.c.\ group or even a non-abelian free group. Does $(M,\tau)$ satisfy the conclusion of Theorem~\ref{T.amplified.stability}?
\end{openproblem} 

In the operator-norm setting, one important source of approximate
homomorphisms comes from Kadison--Kastler perturbation theory.  If two
$C^*$-subalgebras of a common ambient algebra have operator-norm unit balls
which are close in Hausdorff distance, then choosing, for each element of one
unit ball, a nearby element of the other unit ball gives a nonlinear
approximate $*$-homomorphism.  In this way perturbation questions for
subalgebras naturally produce examples of the approximate maps studied in
operator-norm Ulam stability.

The analogous source of examples is much less significant in the tracial
setting.  Let \(A,B\subset (M,\tau)\) be unital von Neumann subalgebras and
let \(E_A,E_B\) be the trace-preserving conditional expectations.  If
\[
d_{H,2}(A_{\leq 1},B_{\leq 1})\leq \delta,
\]
where the Hausdorff distance is computed with respect to \(\|\cdot\|_2\) on
the operator-norm unit balls, then equivalently
\[
\sup_{a\in A_{\leq 1}}\|a-E_B(a)\|_2\leq \delta
\quad\text{and}\quad
\sup_{b\in B_{\leq 1}}\|b-E_A(b)\|_2\leq \delta .
\]
Indeed, \(E_B(a)\) is the \(L^2\)-orthogonal projection of \(a\) onto
\(L^2(B)\), and \(E_B\) is operator-norm contractive.

Thus there is a canonical comparison map
$
E_B|_{A_{\leq 1}}\colon A_{\leq 1}\to B_{\leq 1}.
$
It is unital, trace-preserving, completely positive, linear and
\(*\)-preserving.  Moreover, for \(a,b\in A_{\leq 1}\),
\[
\begin{aligned}
\|E_B(ab)-E_B(a)E_B(b)\|_2
&\leq \|E_B(ab)-ab\|_2
   +\|ab-E_B(a)E_B(b)\|_2       \\
&\leq \delta
   +\|(a-E_B(a))b\|_2
   +\|E_B(a)(b-E_B(b))\|_2      \\
&\leq 3\delta .
\end{aligned}
\]
Hence Hausdorff closeness in trace norm of subalgebras automatically produces
a unital trace-preserving completely positive almost multiplicative map.
For such maps the Stinespring construction already gives the natural
comparison with a representation into an amplification corner.  Consequently,
unlike in the operator-norm Kadison--Kastler setting, this construction does
not provide a particularly interesting source of $L^2$-approximate
\(*\)-homomorphisms.

Thus, we finish with the following question.

\begin{openproblem}
Find a natural source of examples of non-trivial $L^2$-approximate \(*\)-homomorphisms or prove that none exist.
\end{openproblem}

\section*{Acknowledgements}

The authors are grateful to Ben De Bondt and Ilijas Farah for helpful discussions on closely related topics.

\medskip

GPT-5.5 was used to assist in drafting parts of this manuscript. All content was reviewed and substantially revised by the authors, who are responsible for the final text.

\end{document}